\documentclass[11pt,a4paper]{amsart}

\usepackage[a4paper,hmargin=3cm,vmargin=3cm]{geometry}
\usepackage{amsmath,amsthm,amsfonts,amssymb,amscd,amstext}
\usepackage[latin1]{inputenc}
\usepackage[english]{babel}
\usepackage{color}
\usepackage{graphicx}
\usepackage{hyperref} 

\AtBeginDocument{%
  \setlength{\abovedisplayskip}{0.8em plus .15em minus .15em}
  \setlength{\belowdisplayskip}{0.8em plus .15em minus .15em}
  \setlength{\abovedisplayshortskip}{0.6em plus .1em}
  \setlength{\belowdisplayshortskip}{0.6em plus .1em}
  \setlength{\parskip}{0.3em plus .1em minus .05em}
}
\makeatletter
\def\and{\unskip\space and\space}
\makeatother

\def\R{\mathbb{R}}

\def\N{\mathbb{N}}
\def\Z{\mathbb{Z}}
\def\C{\mathbb{C}}

\def\D{\mathbb{D}}

\newcommand{\ben}{\begin{enumerate}}
\newcommand{\bit}{\begin{itemize}}
\newcommand{\een}{\end{enumerate}}
\newcommand{\eit}{\end{itemize}}

\newcommand{\ed}{\end{document}}

\def\cU{\mathcal{U}}
\def\cP{\mathcal{P}}

\def\cB{\mathcal{B}}
\def\cW{\mathcal{W}}

\def\cV{\mathcal{V}}
\def\cJ{\mathcal{J}}

\def\cH{\mathcal{H}}

\def\cM{\mathcal{M}}

\def\cG{\mathcal{G}}
\def\cF{\mathcal{F}}

\def\cK{\mathcal{K}}
\def\cO{\mathcal{O}}

\def\cI{\mathcal{I}}

\let\8=\infty \let\0=\emptyset  
\let\hat=\widehat

\let\landa=\lambda
\let\alfa=\alpha

\let\parc=\partial

\def\ep{\varepsilon}
\def\vart{\vartheta}
\def\landa{\lambda}

\def\flecha{\rightarrow}

\def\cte.{\mathop{\rm cte.}\nolimits}

\def\det{\mathop{\rm det}\nolimits}

\def\Re{\mathop{\rm Re }\nolimits}

\def\cosh{\mathop{\rm cosh }\nolimits}

\def\N{\mathbb{N}}

\def\R{\mathbb{R}}
\def\Z{\mathbb{Z}}
\def\C{\mathbb{C}}
\def\I{\mathbb{I}}

\def\D{\mathbb{D}}

\newcommand{\V}{\mathcal{V}}

\numberwithin{equation}{section} 

\newtheorem{theorem}{Theorem}[section]
\newtheorem{lemma}[theorem]{Lemma}
\newtheorem{proposition}[theorem]{Proposition}

\newtheorem{remark}[theorem]{Remark}
\newtheorem{corollary}[theorem]{Corollary}
\newtheorem{definition}[theorem]{Definition}

\newtheorem{claim}[theorem]{Claim}

\begin{document}


\title{All Serrin bands are unduloidal}

\author{Laurent Hauswirth}

\address{Laboratoire d'Analyse et de Matématiques Apliquées, Université Gustave Eiffel,
Paris, France.}
\email{laurent.hauswirth@u-pem.fr} 

\author{Pablo Mira}

\address{Departamento de Matematica Aplicada y Estad\' \i stica, Universidad Politecnica 
de Cartagena, Spain} \email{pablo.mira@upct.es}

\today

\maketitle
\begin{abstract}
We show that any Serrin domain in $\R^2$ with two boundary components is a member of the canonical unduloidal family.
\end{abstract}


\section{Introduction}
A Serrin domain is a smooth domain $\Omega\subset \R^2$ that supports a positive solution to
\begin{equation}\label{serrinprob}\def\arraystretch{1.8}\left\{\begin{array}{lll} \Delta u +2=0 & \text{ in } & \Omega, \\
u=0, \hspace{0.5cm} \displaystyle \frac{\parc u}{\parc \nu} = -c  & \text{ on } & \parc \Omega, 
\end{array} \right.
\end{equation} 
for some $c>0$. Here, $\nu$ is the outward unit normal of $\parc \Omega$. If $\Omega$ is bounded, then by Serrin's celebrated theorem \cite{Se}, $\Omega$ must be a disk. In this paper we fix the torsion equation $\Delta u +2=0$ and study the unbounded case.

When $\Omega$ is unbounded and has finite connectivity, Ros and Sicbaldi \cite{RS} proved that $\parc \Omega$ must have at least two \emph{unbounded} connected components. Also, if $\parc \Omega$ has exactly two unbounded components, then $\overline\Omega$ is diffeomorphic to $[0,1]\times \R$. We say then that $\Omega$ is a \emph{Serrin band}. Ros-Sicbaldi \cite{RS} proved that any Serrin band is, up to Euclidean symmetry, a symmetric bigraph of the form
\begin{equation}\label{bigraph}
\Omega=\{(x_1,x_2) : |x_2| < f(x_1)\},
\end{equation} 
with $f:\R\flecha \R$ a positive, bounded, real analytic function.

The simplest examples of such Serrin bands are the \emph{flat bands} where $f$ is constant. Non-flat periodic Serrin bands have been constructed in Fall-Minlend-Weth \cite{FMW1} as small bifurcations of flat ones, and in Dávila, Del Pino, Musso and Wheeler \cite{DDMW} by desingularizing a chain of disks along a line. 

More generally, there is a real analytic $1$-dimensional family $\cF=\{\Omega_\tau : \tau\in (0,1]\}$ of periodic Serrin bands, which interpolates between a flat band ($\tau=1$) and a chain of tangent disks ($\tau=0$), obtained by Cerezo, Fernández and Mira \cite{CFM}. We call them \emph{unduloidal Serrin bands}, since their geometric shape and moduli space structure resembles the usual rotational constant mean curvature (CMC) surfaces.

One noteworthy difference between both contexts is that, in the Serrin problem, the construction of these unduloidal bands does not come from a symmetry reduction of the problem from ambient isometries, like the classical CMC case. Instead, the construction in \cite{CFM} uses the geometric ansatz of a foliation by \emph{planar} capillary curves. This means that each capillary curve $\gamma\subset \Omega_\tau$ of the foliation has the property that the graph of the solution $u$ along $\gamma$ is contained in a plane of $\R^3$, and $|\nabla (u-P)|$ is constant along that curve. As a consequence of this construction, all unduloidal bands can be explicitly described in terms of elliptic functions. This foliation is inspired by geometry, see \cite{CFM2,CFM3,FHM}.

\begin{figure}[h]
  \centering
 \includegraphics[width=0.86\textwidth]{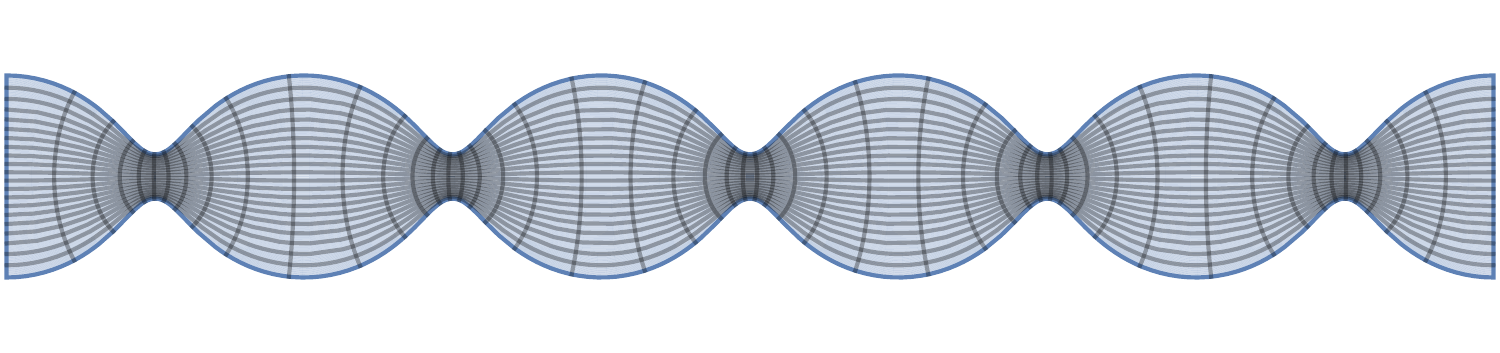} \\[0.6cm] \includegraphics[width=0.72\textwidth]{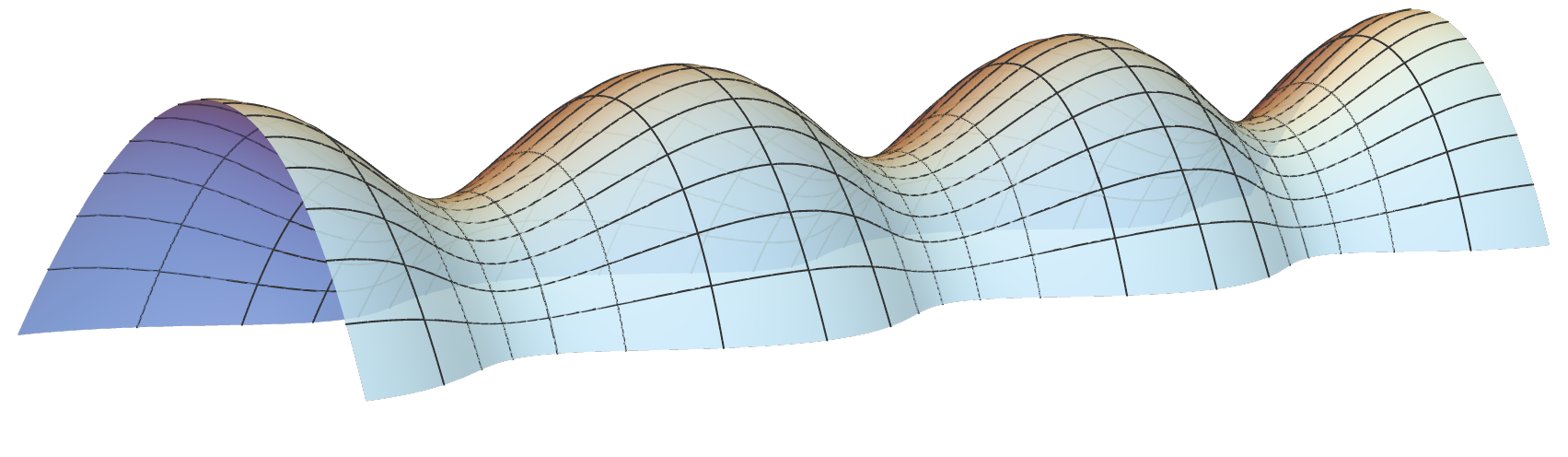}  \\    
  \caption{Top: A member $\Omega_\tau$ of the unduloidal family $\cF$ of Serrin bands. Bottom: The graph of the Serrin solution $u$ on the unduloidal band $\Omega_\tau$.\label{figbandas}} 
  \end{figure}



Our main result is then:

\begin{theorem}\label{th:main}
Any Serrin band in $\R^2$ is unduloidal, i.e., up to Euclidean similarity it is an element of the family $\cF:=\{\Omega_\tau : \tau\in (0,1]\}$.\end{theorem}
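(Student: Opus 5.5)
The plan is to show that the level structure of $u$ forces the planar-capillary-curve foliation of \cite{CFM}. Once that foliation is established, the classification follows from the explicit description of $\cF$. The natural tool is a holomorphic quadratic differential, or an equivalent complex-analytic quantity built from $u$. Write $z=x_1+ix_2$. The function $\phi:=(u_z)^2$ is not holomorphic, since $u_{z\bar z}=-1/2$. However, the Hessian combination $u_{zz}$ is holomorphic, because $\partial_{\bar z}u_{zz}=\partial_z u_{z\bar z}=0$. Also $u+|z|^2/2$ is harmonic. So I would work with the holomorphic function $Q:=u_{zz}$ on $\Omega$. Using the Ros--Sicbaldi description \eqref{bigraph}, I would first record the boundary behaviour of $Q$. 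Along $\partial\Omega$ we have $u=0$ and $|\nabla u|=c$, and the curvature $\kappa$ of the boundary satisfies $\Delta u = u_{\nu\nu}+\kappa u_\nu$, so $u_{\nu\nu}=-2+c\kappa$. This gives an explicit formula for $\Re(Q\,\zeta^2)$ on $\partial\Omega$ in terms of $\kappa$, where $\zeta$ is the unit tangent.

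The second step is to identify the invariant that characterizes $\cF$. For the planar capillary foliation, each leaf $\gamma$ satisfies that $u-P_\gamma$ has constant gradient norm on $\gamma$ for some affine $P_\gamma$. I would translate this, via the construction in \cite{CFM}, into the statement that a suitable holomorphic function $G$ on $\Omega$, combining $u_z$, $\bar z$ and $Q$, has the leaves as its real level sets. In the unduloidal family, the elliptic-function description suggests that $G$ satisfies a first-order ODE of the type $(G')^2=p(G)$ with $p$ a polynomial, analogous to the Delaunay/Hopf picture. I would aim to show that for an arbitrary Serrin band there is a conformal coordinate $w$ on the band, with the strip model $[0,1]\times\R$, in which the relevant holomorphic quantity is bounded and has constant imaginary part on both boundary lines.

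The third step is a Liouville-type argument. Boundedness of $f$ and real analyticity give boundedness of $u$ and of its derivatives, through gradient estimates on a band of bounded width. Pulling back to a conformal strip $S=\{0<\Im w<1\}$, the holomorphic function then has real boundary data of a prescribed type on $\partial S$. Reflection would make it a bounded entire function of a periodic or constant type, hence an explicit quantity. From this one reads off that the level curves of the associated harmonic conjugate are planar capillary curves in the sense above. Then the uniqueness part of \cite{CFM} would identify $\Omega$ with some $\Omega_\tau$ after a similarity. The similarity is used to normalize $c$, or the width of the band.

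The main obstacle is the second step: finding the correct holomorphic object whose boundary values become real, or constant-argument, precisely because of the overdetermined condition $|\nabla u|=c$. The conformal strip coordinate also has to be controlled at infinity so that Liouville applies. I would first try the Hopf-like function $\Phi=4u_z^2+\ldots$, corrected by terms in $\bar z$, chosen so that $\partial_{\bar z}$ vanishes. I would test it on the explicit elliptic solutions to guess the exact combination. The asymptotic control I would obtain from the Ros--Sicbaldi bigraph structure together with uniform $C^k$ bounds.
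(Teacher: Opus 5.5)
\textbf{Where the proposal stops.} Your first and third steps reproduce Proposition~\ref{prop:qconst} of the paper. In a conformal strip parameter $g:\cU\to\Omega$, the Hopf differential $\mathfrak{q}=(u_{ww}\circ g)(g')^2$ is holomorphic, and its imaginary part vanishes on $\partial\cU$ because $u_{\tau\nu}=0$ when $u=0$ and $|\nabla u|=c$. Once it is bounded, it is constant by reflection and Liouville. The gap is the next sentence, ``from this one reads off that the level curves \dots\ are planar capillary curves.''

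Constancy of $\mathfrak{q}$ holds for \emph{every} Serrin band, so it cannot single out $\cF$. In the paper it is only the normalization $\mathfrak{q}=1/2$ from which the real work starts. What remains is a nonlinear free-boundary problem: a bounded harmonic $\omega=\log|g'|$ on the strip with $\omega_y=A\sinh\omega$ on $y=y^*$, plus the closing condition on $g$. By Cerezo--Fern\'andez--Mira, being foliated by planar capillary curves amounts to $\omega_z$ having spectral genus at most one as an mKdV potential. Your hoped-for ODE $(G')^2=p(G)$ is exactly that genus-one statement. No boundedness or Liouville argument can yield it, just as a constant Hopf differential on a CMC torus does not make it rotational (Wente tori). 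You flag this step as the main obstacle, but the proposal contains no mechanism for excluding higher spectral genus, and that exclusion is the entire content of the theorem.

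\textbf{What the paper does instead.} It supplies the missing mechanism in four stages.
\begin{enumerate}
\item[(a)] The hierarchic Jacobi fields ${\rm Re}\,Q_j[\omega_z]$ are bounded solutions of one fixed Robin problem, whose bounded solution space is finite-dimensional (Lemma~\ref{th:appendix}). So $\omega_z$ is algebro-geometric, and its spectral polynomial has only negative roots by Shnol's theorem.
\item[(b)] Under an extra symmetry line orthogonal to the axis, a monotonicity argument in $\lambda$ built on the Serrin identity \eqref{sericon2} shows that $\cB(\lambda)$ in \eqref{rut2} has at most one negative zero, and that it is simple. This forces genus $\le 1$.
\item[(c)] Real isospectral mKdV flows preserve Serrin bands, and the isospectral set is compact. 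Maximizing $\omega(0,0)$ over the orbit closure produces a symmetric, hence unduloidal, band.
\item[(d)] An isolation argument comparing kernels of the maps $\cH_\lambda$ in \eqref{defach} rules out such a band when the original minimal order is $n\ge 2$.
\end{enumerate}

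\textbf{A second gap in step three.} You underestimate the a priori estimates. Bounded $f$ plus real analyticity, which is purely local, does not give uniform $C^2$ bounds up to $\partial\Omega$. Without a uniform curvature bound on $\partial\Omega$ and $\inf f>0$, you cannot bound $\mathfrak{q}$ or control $g'$ at infinity, so Liouville is not yet available. The paper gets these bounds by a blow-up argument. Traizet's classification identifies the possible limits as hairpins. The capillary flux, which scales as $\cF(\Omega_n)=\lambda_n\cF(\Omega)$, then excludes them.
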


There exist several important classification results for overdetermined elliptic problems for $\Delta u +f(u)=0$ in both bounded and unbounded smooth domains of $\R^n$. These results typically prove, under some additional hypotheses, uniqueness of \emph{symmetric domains}, like balls, half-spaces, cylinders, etc. In the opposite direction, another very fruitful research line has been the construction of \emph{exceptional} solutions to overdetermined elliptic problems, i.e., solutions that are symmetric domains like the ones listed above. Several key results in this direction are \cite{ABM,CF,DZ,EFRS,FMW1,FMW2,KS,RRS2,Ru,RSW,Sic,SS,Wh}; see e.g. the recent counterexamples to the Schiffer conjecture in the plane \cite{CP,CS}, and also \cite{CSS}.

In comparison with these previous results, our Theorem \ref{th:main} is singular in the sense that: (i) only a topological assumption is imposed, and (ii) the domains that appear in the classification (unduloidal bands) are \emph{not} symmetric. We only know of one example of this type of result: the classification by Traizet \cite{Tr} of all solutions to the Bernoulli one-phase problem in the plane with a finite number of boundary components. In that case, \cite{Tr} classifies them as half-planes, complements of disks and the exceptional \emph{hairpin domains} of \cite{HHP}. Traizet's theorem relies in some deep theorems of minimal surface theory, and will be an important ingredient for the curvature estimates needed in our argument.

Theorem \ref{th:main} is the overdetermined version of the fundamental theorem by Korevaar, Kusner and Solomon \cite{KKS}, according to which any properly embedded constant mean curvature surface of finite genus and two ends in $\R^3$ is a rotational unduloid. It can also be seen as an overdetermined version of the celebrated uniqueness theorem for Riemann's minimal examples by Meeks-Pérez-Ros \cite{MPR}. The parallelism here comes from the fact that Riemann's examples are foliated by circles and lines in parallel planes, which is a similar foliation structure to that of the Serrin unduloidal bands. 


\subsection*{Sketch of the proof}
As explained above, unduloidal bands do not come from a reduction by ambient symmetries of the overdetermined problem \eqref{serrinprob}. Therefore, methods such as moving planes do not seem sufficient for uniqueness here, since any possible proof strategy should be able to detect these unduloidal bands. 

Our approach is inspired by Meeks-Pérez-Ros' uniqueness proof \cite{MPR}, and uses the theory developed by Cerezo, Fernández and Mira \cite{CFM} for the study of Serrin domains. The main idea is to show that any Serrin band arises, in an adequate conformal parametrization, from an algebro-geometric potential of the mKdV hierarchy with spectral genus $\leq 1$. Proving this is enough, since in \cite{CFM} it is shown that if the spectral genus of a Serrin band is $\leq 1$, then the band is foliated by capillary curves, and hence it is an element of the unduloidal family of Theorem \ref{th:main}. As in \cite{MPR}, we will use an isospectral deformation to force any Serrin candidate into uniqueness. However, the holomorphic KdV deformation in \cite{MPR} does not preserve Serrin boundary conditions. In this sense, our approach here develops a completely new alternative approach. 

The proof will go through the following steps:

{\bf \underline{Section \ref{sec:apriori}:}} We give a priori estimates for any Serrin band in $\R^2$, showing that their boundary curves have bounded geodesic curvature and stay at a positive distance from their central symmetry line. This is done using fundamental ideas of Ros, Ruiz and Sicbaldi \cite{RS,RRS1,RRS2} together with some geometric arguments for curvature estimates for minimal and CMC surfaces using flux. A key element of these a priori estimates is Traizet's characterization of the hairpin domain in the plane \cite{Tr}, which allows us to classify the blow-up limits of Serrin bands and prove the non-collapsing property of the necks by a flux argument.

{\bf \underline{Section \ref{sec:confo}:}} The estimates in Section \ref{sec:apriori} are used to prove that any Serrin band $\Omega\subset \R^2$ can be parametrized by a conformal global coordinate $g:\cU\equiv \R\times [-y^*,y^*]\flecha \Omega$ for which its \emph{Hopf differential} $\mathfrak{q}$ is constant. These parameters will allow us to describe the geometry of the Serrin band in terms of the harmonic function $\omega(x,y)$ on $\cU$ given by $e^{2\omega}=|g'|^2$. More importantly, it places any arbitrary Serrin band in the analytic context used in \cite{CFM}, that was in principle only available for \emph{periodic} Serrin bands. We provide then a conformal representation for Serrin bands with respect to this complex parameter, and describe formulas that control the Serrin boundary conditions.

{\bf \underline{Section \ref{sec:kdv}:}} Here we prove that if $\Omega$ is any Serrin band, then the holomorphic function $\omega_z:\cU\flecha \C$ coming from the canonical conformal parameters of Section \ref{sec:confo} is an algebro-geometric potential of the mKdV hierarchy. This was proved in \cite{CFM} for \emph{periodic} Serrin bands. The idea is that the hierarchy produces a sequence $\{J_n\}_n$ of bounded harmonic (Jacobi) fields on $\Omega$ that satisfy the same Robin boundary conditions. But the space of such Jacobi fields can be shown to be finite-dimensional. Hence, some element of the hierarchy is a linear combination of the previous ones, and this means $\omega_z$ is algebro-geometric. The importance of this result is that it allows to describe all Serrin bands by algebraic (yet complicated) conditions. In particular, any Serrin band has an associated real \emph{spectral polynomial} $a(\landa)$ of degree $2n$. The number $n$ is the spectral genus, and detects the algebraic and geometric complexity of the band. Flat bands correspond to $n=0$, and unduloidal bands to $n=1$. In Section \ref{sec:kdv} we also describe algebraically the Serrin boundary conditions, and prove that all roots of the spectral polynomial $a(\landa)$ are negative.

{\bf \underline{Section \ref{sec:uniqueness}:}} In this section we prove Theorem \ref{th:main} under the additional hypothesis that the Serrin band has a symmetry line orthogonal to its central symmetry axis. If we view $\Omega$ as in \eqref{bigraph}, this means that $f$ is an even function. The proof is an estimate on the number of negative zeros that the spectral polynomial $a(\landa)$ can have. We show that under this symmetry assumption, and if the algebraic mKdV relation defining $\Omega$ is the \emph{minimal one}, then $a(\landa)$ has degree at most two. Hence $n=1$ and $\Omega$ must be in the unduloidal family.

{\bf \underline{Section \ref{sec:isospectral}:}} After the uniqueness result in Section \ref{sec:uniqueness}, the fundamental idea is to consider an arbitrary Serrin band $\Omega_0$ and show that one can deform it into another one, $\Omega$, that has this additional vertical symmetry line. In order to do this, in Section \ref{sec:isospectral} we apply the well-established isospectral deformation theory for the mKdV equation to our Serrin domains. The complex meromorphic mKdV flows are Hamiltonian commuting flows. In our situation, we restrict to the \emph{real} flows generated by the hierarchic harmonic Jacobi fields $\{J_n\}_n$. We prove such fields are actually integrable, and even more, they preserve the property of being a Serrin band, with the same Neumann angle and conformal strip width as the original one. Moreover, we show that the space of all such \emph{isospectral} Serrin bands is compact, using again the curvature estimates of Section \ref{sec:apriori}.

{\bf \underline{Section \ref{sec:mainth}:}} Once here, we are in the position to complete the proof of Theorem \ref{th:main}. Consider an arbitrary Serrin band $\Omega_0$. First, we prove using the isospectral deformation theory and the compactness result of Section \ref{sec:isospectral} that there exists a Serrin band in the closure of the isospectral orbit of $\Omega_0$ that has the desired vertical symmetry line. By Section \ref{sec:uniqueness}, $\Omega$ is unduloidal. Finally, we show that this can only happen if $\Omega_0$ itself is also unduloidal. This completes the proof.

\subsection*{AI declaration:} The authors conceived the proof outlined above as part of a multi-year research work, and developed all key mathematical ideas and arguments in it without AI assistance. ChatGPT was used at a final stage to simplify the algebraic part of the proof and discuss some technical points, always within our original strategy. The only exception is the technical Lemma \ref{th:appendix} in the Appendix, whose proof was obtained directly from ChatGPT, and rewritten by the authors. The paper contains no AI writing.

%
%


\section{A priori estimates and compactness}\label{sec:apriori}

\subsection{Capillary flux}
Solutions to the overdetermined problem 
\begin{equation}\label{selin}
\Delta u -2H=0 \hspace{0.2cm} \text{in $\Omega$}, \hspace{0.5cm} u=0, \hspace{0.3cm} \frac{\parc u}{\parc \nu}=-c, \hspace{0.2cm} \text{on $\parc \Omega$}
\end{equation}
for some $H\in \R$, $c>0$ represent stationary critical points of the energy functional (see \cite{WW})
$$\mathcal{E}(\Omega,u):= \int_\Omega \left( \frac{1}{2} |Du|^2 +2H u +\frac{c^2}{2}\right) dx.$$
Viewing the integrand as a Lagrangian $L(u,\nabla u)$, we arrive at the usual stress-energy tensor (see e.g. \cite{Al})
$T:\Omega\flecha \cM_{2}(\R)$ given by $$T_{ij}= u_{x_i} u_{x_j} -\left( \frac{1}{2} |Du|^2 +2H u +\frac{c^2}{2}\right)\delta_{ij},$$ which satisfies ${\rm div}(T)= (\Delta u-2H) Du=0.$

We now consider an unbounded \emph{band} domain $\Omega\subset \R^2$, with $\overline{\Omega}$ diffeomorphic to $[0,1]\times \R$, where \eqref{selin} can be solved. Let  $\gamma$ be a transverse section, i.e., a smooth compact arc joining the two boundary components of $\Omega$. So, $\gamma$ divides $\Omega$ into two regions. We select one of these two \emph{ends} of $\Omega$ as the positive one, so $\gamma$ has an orientation.

\begin{definition}
The \emph{capillary flux} of $\Omega$ is defined as 
\begin{equation}\label{defiflux}
\cF(\Omega)=\int_\gamma T {\bf n}_\gamma \, \in \R^2,
\end{equation} where ${\bf n}_\gamma$ is the unit normal of $\gamma$ that points towards the \emph{positive} end of $\Omega$.
\end{definition}

We need to check that this is a good definition, i.e., it does not depend on the chosen section. Let $\gamma_1,\gamma_2$ be two such sections, and assume they are disjoint. Let $D$ denote the domain of $\Omega$ bounded by $\gamma_1,\gamma_2$. Then, $$0= \int_D {\rm div} (T) = \int_{\parc D}  T {\bf n}_D,$$ where ${\bf n}_D$ is the exterior unit normal of $\parc D$. Along the capillary boundary arcs of $\parc D\cap \parc \Omega$, ${\bf n}_D$ is equal to the exterior unit normal $\nu$ of $\parc\Omega$, and $Du= -c \nu$. So, using also that $u=0$ along $\parc \Omega$, we have $$T {\bf n}_D =T \nu= c^2 \nu -\left( \frac{c^2}{2} +2 H u +\frac{c^2}{2}\right)\nu =0 \hspace{0.5cm} \text{along $\parc D\cap \parc \Omega.$}$$ From here, we deduce directly the desired independence.

When $u>0$ in $\Omega$, it follows from \cite{RS,Tr} that $\Omega$ is, up to isometry of $\R^2$, a symmetric bigraph \eqref{bigraph}. The flux in this case is of the form $$(\cF(\Omega),0)$$ for some number $\cF(\Omega)$, and can be computed by means of \eqref{defiflux} taking $\gamma$ to be a segment $x_2\mapsto (0,x_2)$, where we have chosen $x_1=0$ just for simplicity. In this situation, 
\begin{equation}\label{fluxsi}
\cF(\Omega)=-c^2 f(0)+\int_{0}^{f(0)} \left( u_{x_1}^2-u_{x_2}^2- 4H u\right)(0,x_2) \, dx_2.
\end{equation}

\subsection{Harmonic functions: the hairpin domain} The hairpin domain was found by Hauswirth, Helein and Pacard in \cite{HHP}. It is given by $$\Omega_0:=\{(x_1,x_2): |x_2|< \cosh(x_1) +\pi/2\},$$ and has the fundamental property of supporting a positive solution to the overdetermined problem
 \begin{equation}\label{hapi}
 \Delta u =0 \hspace{0.3cm} \text{in $\Omega_0$}, \hspace{0.5cm} u=0, \ \frac{\parc u}{\parc \nu} =-c \hspace{0.3cm} \text{on $\parc \Omega_0$},
 \end{equation} 
 for some $c>0$. The domain $\Omega_0$ is parametrized by the conformal mapping $$g(z)=z + \sinh(z): B_0\flecha \Omega_0, \hspace{0.5cm} B_0:=\{z\in \C : |{\rm Im}(z)|<\pi/2\}.$$ The corresponding solution $u$ to \eqref{hapi} is then $u=\mathfrak{v}\circ g^{-1}$, where $\mathfrak{v}(z):= c\,{\rm Re} \cosh(z)$.
 

To compute the flux of the hairpin $\Omega_0$ we select the transversal segment $\Omega_0\cap \{x_1=0\}$, and note that $\Omega_0$ is symmetric with respect to $x_1=0$. So, 
\begin{equation}\label{intehair1}
\cF(\Omega_0)= -c^2 f(0) +\int_0^{f(0)} (u_{x_1}^2-u_{x_2}^2) (0,x_2)\, dx_2,
\end{equation} 
with $f(0)=1+\pi/2$. Since $u(x_1,x_2)=u(-x_1,x_2)$ we have $u_{x_1}(0,x_2)=0$. We now use the conformal parameter $z=x+iy$. Then the arc $\{x_1=0\}\cap \Omega$ corresponds to $\{g(iy):-\pi/2<y<\pi/2\}$. Since $g(iy)=i(y+\sin y)$, we see that the integral in \eqref{intehair1} is $$-c^2\int_0^{\pi/2} \frac{\sin^2 t}{(1+\cos t)^2} (1+\cos t) dt =-c^2\int_0^{\pi/2}(1-\cos t) dt= c^2(1-\pi/2).$$ Thus 
\begin{equation}\label{herflux}
\cF(\Omega_0)=-c^2\pi.
\end{equation}
\subsection{A curvature estimate}

In this section we consider a Serrin band 
\begin{equation}\label{bandi}
\Omega:=\{(x_1,x_2): |x_2|< f(x_1)\}\subset \R^2,
\end{equation} 
with $f:\R\flecha (0,\8)$ a smooth positive bounded function. We will let $u\in C^{\8}(\Omega)$ denote its associated solution to $\Delta u+2=0$ with Serrin overdetermined boundary conditions:
\begin{equation}\label{serrinproban}\def\arraystretch{1.8}\left\{\begin{array}{lll} \Delta u +2=0 & \text{ in } & \Omega, \\
u=0, \hspace{0.5cm} \displaystyle \frac{\parc u}{\parc \nu} = -c  & \text{ on } & \parc \Omega, 
\end{array} \right.
\end{equation}
where $u>0$ in $\Omega$ and $c>0$ is constant. Our objective is to prove:

\begin{theorem}\label{th:estc2}
In the above conditions, for any $\alfa\in (0,1)$, we have $$||u||_{C^{2,\alfa}(\overline{\Omega})} <\8.$$
\end{theorem}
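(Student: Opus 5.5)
The plan is to reduce the $C^{2,\alfa}$ bound to a uniform bound on the geodesic curvature of $\parc\Omega$ together with a uniform lower bound on the width $f$. With both in hand, standard Schauder estimates up to the boundary finish the proof. These are applied on unit-scale pieces of $\Omega$ near each boundary point and in the interior, using $u=0$ on $\parc\Omega$ and $\Delta u=-2$. The pieces are uniformly $C^{2,\alfa}$ because the curvature is bounded, and a curvature bound yields $C^{1,1}$, and then, by bootstrapping through the overdetermined condition, higher regularity of the boundary. We also need a uniform $C^0$ bound on $u$. This follows from $f$ bounded: $u$ is dominated by the torsion function of the flat strip $|x_2|<\sup f$. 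Gradient bounds then come from interior and boundary gradient estimates. So everything rests on a contradiction argument by blow-up.

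Suppose the curvature $\kappa$ of $\parc\Omega$ is unbounded. Take points $p_n\in\parc\Omega$ with $|\kappa(p_n)|\to\8$. By a standard point-picking argument, we may assume $\lambda_n:=|\kappa(p_n)|$ essentially bounds $|\kappa|$ on a boundary ball of radius $R_n/\lambda_n$ around $p_n$, with $R_n\to\8$. Rescale by $\Omega_n:=\lambda_n(\Omega-p_n)$ and $u_n(x):=\lambda_n u(p_n+x/\lambda_n)$. Then $\Delta u_n=-2/\lambda_n\to 0$, $u_n=0$ and $\parc u_n/\parc\nu=-c$ on $\parc\Omega_n$, and $|\kappa_n|\le 2$ locally with $|\kappa_n(0)|=1$. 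The gradient bound ($|\nabla u|$ bounded) gives local Lipschitz control of $u_n$. Passing to a subsequence, $\Omega_n$ converges smoothly on compacts to a domain $\Omega_\8$ carrying a positive harmonic function with the overdetermined conditions of \eqref{hapi}, and the limit boundary has curvature $1$ at the origin. The limit should have finitely many boundary components, at most two by the topology of the band. Its positivity needs a Harnack argument, or the limit might be zero on a component, which we would exclude by the Neumann condition. Traizet's classification \cite{Tr} then says $\Omega_\8$ is a half-plane, the complement of a disk, or the hairpin $\Omega_0$. Half-planes are excluded by the curvature normalization. The disk complement is excluded because $\Omega$ is a symmetric bigraph with two graph components, so no boundary component of the blow-up is closed. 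Hence $\Omega_\8$ is a hairpin, suitably scaled.

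To rule out the hairpin, use the capillary flux. The flux of $\Omega$ is a fixed vector $\cF(\Omega)$, independent of the section. Under the rescaling, the flux of $(\Omega_n,u_n)$ computed on a section $\gamma$ scales like $\lambda_n^{-1}\cF(\Omega)$, up to the $H$-term; the term $4Hu$ carries an extra factor $\lambda_n^{-2}$ after scaling. So this flux tends to $0$. On the other hand, choose the section as the rescaled neck segment $\{x_1=\text{const}\}$ through the blow-up region. The hairpin's neck is a compact transversal arc joining its two boundary components, so it corresponds in $\Omega_n$ to an honest section of the band. Smooth convergence then forces the flux of $\Omega_n$ to converge to $\cF(\Omega_0)=-c^2\pi\neq 0$ by \eqref{herflux}, which is a contradiction. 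Orientation conventions are irrelevant since the value is nonzero.

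The same blow-up scheme gives the non-collapsing lower bound $\inf f>0$. If $f(x_1^n)\to 0$, bounded curvature plus smallness of the neck would force curvature to blow up, or would give a limit configuration incompatible with the Neumann condition $|\nabla u|=c$. Rescaling by $1/f(x_1^n)$ again yields a harmonic limit with two boundary components at unit distance. By Traizet this limit is a hairpin, and the flux argument above again kills it.

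The main obstacle is making the blow-up limit legitimate. We need locally uniform control of $\Omega_n$ (embeddedness, finitely many components in compacts, no accumulation of boundary sheets) and a nontrivial positive limit $u_\8$. We also need the hairpin's neck to genuinely come from a transverse section of $\Omega$ rather than an arc with both ends on the same component. For the first points I would use the Ros–Sicbaldi height and gradient estimates $|\nabla u|\le c$-type bounds from \cite{RS,RRS1}. For the last point I would use the bigraph structure \eqref{bigraph}: the two sheets of the hairpin limit must come from the upper and lower graphs, which are separated by the symmetry axis.
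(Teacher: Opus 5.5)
Your architecture is the paper's: blow-up at points of large curvature, Traizet's classification, capillary flux against the hairpin, then non-collapsing necks and Schauder. However, the step that makes the blow-up limit a smooth two-component domain is missing, and it is not a routine technicality.

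Let $L_n=\{x_2=d_n\}$ be the rescaled symmetry axis. If $d_n\to-\8$, the limit has one boundary component and Traizet's theorem already gives a contradiction. If $d_n$ stays bounded, nothing in your argument prevents the limit $\Gamma_\8$ of the upper boundary curves from touching $L_\8$. In that case $\Gamma_\8$ is tangent to its own mirror image, which is the limit of the lower curves; a neck of width $o(1/\landa_n)$ at distance $O(1/\landa_n)$ from $p_n$ produces exactly this. The limit is then a pinched region, not a smooth domain with two boundary components. So Traizet's theorem does not apply, and there is no hairpin neck to use as a section.

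You flag ``no accumulation of boundary sheets'' and point to the gradient bound. The bound $|Du_n|\le M$ is indeed the right ingredient, but it only works through a flux balance that you do not supply. Near a hypothetical contact point, write $\sigma_n$ as $x_2=d_n+h_n(x_1)$, $|x_1|<r_0$, with $h_n>0$, $h_n(0),h_n'(0)\to0$ and $|h_n''|\le C$. Apply the divergence theorem to $\nabla u_n$ on the thin region $A_n(r)$ between $L_n$ and this graph, $|x_1|<r$. Since $\parc_{x_2}u_n=0$ on $L_n$ and $\parc_\nu u_n=-c$ on the graph, you get $c\int_{-r}^{r}\sqrt{1+(h_n')^2}\,dx_1\le \frac{2}{\landa_n}|A_n(r)|+M\bigl(h_n(r)+h_n(-r)\bigr)\le\frac{2}{\landa_n}|A_n(r)|+M\bigl(2h_n(0)+Cr^2\bigr)$. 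Letting $n\to\8$ gives $2cr\le MCr^2$, which fails for small $r$. The same computation at the original scale, with an extra $O(r^3)$ term coming from $2|A_n(r)|$, is how one proves $\inf f>0$.

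Three further corrections.

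First, your flux scaling is wrong. With $u_n(z)=\landa_n u(p_n+z/\landa_n)$ one has $Du_n=Du$ and $2H_nu_n=2Hu$ at corresponding points, so the stress tensor is pointwise scale-invariant while sections are stretched by $\landa_n$. Hence $\cF(\Omega_n)=\landa_n\cF(\Omega)$, which does not tend to $0$. The contradiction survives anyway: $\landa_n\cF(\Omega)$ is either identically zero or unbounded, and in neither case can it converge to $-c^2\pi$.

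Second, the $C^0$ bound does not follow from $f$ being bounded. Comparing $u$ with $f_0^2-x_2^2$, where $f_0=\sup f$, on an unbounded domain requires a growth condition. Indeed $f_0^2-x_2^2+\ep\cos(\pi x_2/(2f_0))\cosh(\pi x_1/(2f_0))$ is a positive, unbounded solution of $\Delta u+2=0$ vanishing on the boundary of the strip $|x_2|<f_0$. The Neumann condition must enter, as in Lemma~\ref{prop:height}. There, sliding the radial solution on a disk of radius $c$ shows that $u(\cdot,0)\le c^2/2$ somewhere in every interval of length larger than $2c$. Then the maximum principle applies to $u-c^2/2-f_0^2+x_2^2$ on bounded pieces. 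The gradient bound you invoke also presupposes that $u$ is bounded.

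Third, once the curvature is bounded, rescaling by $1/f(x_1^n)$ sends the curvature to $0$, so the limit is a flat strip, not a hairpin. A flat strip is not on Traizet's list, so it is excluded directly; that step works, but not for the reason you state.
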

The proof also yields uniform $C^{2,\alpha}$ boundary charts. By bootstrapping arguments, we obtain then $
||u||_{C^{k,\alpha}(\overline{\Omega})}<\infty
$
for every $k\geq 2$ and $\alfa\in (0,1)$. We will first prove $C^0$ and $C^1$ estimates.

\subsection{Height and gradient estimates}

\begin{lemma}\label{prop:height}
$u$ is bounded. Specifically, 
\begin{equation}\label{c0est}
||u||_{L^{\8}(\Omega)}\leq \frac{c^2}{2}+ ({\rm sup}_\R \,f)^2 <\8.
\end{equation}
\end{lemma}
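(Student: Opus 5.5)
The plan is to compare $u$ with an explicit barrier and then control the harmonic error term with a Phragmén–Lindelöf argument in the strip $S:=\{|x_2|<F\}$, where $F:={\rm sup}_\R f<\8$. The bound \eqref{c0est} splits naturally into two pieces: $F^2$, which is the maximum of the one-dimensional torsion barrier $w(x_1,x_2):=F^2-x_2^2$, and $c^2/2$, which is the constant produced by the classical $P$-function. I would use $w$ to obtain the qualitative statement that $u$ is bounded, and the $P$-function to reach the sharper constant.

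\emph{Step 1 (barrier).} Set $v:=u-w$. Since $\Delta w=-2$, the function $v$ is harmonic in $\Omega$. On $\parc\Omega$ we have $u=0$ and $w\geq 0$, because $\Omega\subset S$, so $v\leq 0$ there. Hence $v^+$, extended by zero to $S\setminus\Omega$, is a continuous subharmonic function on the strip $S$, of width $2F$, and it vanishes on $\parc S$.

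\emph{Step 2 (growth control).} The Phragmén–Lindelöf principle on $S$ gives $v^+\equiv 0$, that is $u\leq w\leq F^2$, as soon as
$$v^+(x)=o\big(e^{\pi |x_1|/(2F)}\big).$$
This growth bound is the main obstacle, since a priori nothing prevents $u$ from growing. To get some growth control, I would use that $h_p:=u+\tfrac12|x-p|^2$ is harmonic and positive. The oscillation of $u$ between nearby points should then be controlled through the boundary condition $|\nabla u|=c$ on $\parc\Omega$ together with interior Harnack and gradient estimates on disks of radius comparable to $F$. This should give at worst $u\leq Ce^{a|x_1|}$. The difficulty is that $a$ depends on the geometry (thin necks), and it must be pushed below $\pi/(2F)$. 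If that fails, I would instead argue with the one-dimensional averages $m(t):=\int_{-f(t)}^{f(t)}u(t,x_2)\,dx_2$. Integrating $\Delta u=-2$ over vertical slabs and using the Neumann data $\parc u/\parc\nu=-c$ on $\parc\Omega$ yields a second-order differential inequality for $m$. This should force $m$ to grow at most polynomially, and the mean-value inequality for $v^+$ on disks of radius about $F$ then turns this into the required growth bound.

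\emph{Step 3 (sharp constant).} Once $u$ is bounded, interior and boundary Schauder estimates (the boundary being a bounded graph) give $|\nabla u|$ bounded as well. The function $P:=|\nabla u|^2+2u$ is subharmonic whenever $\Delta u=-2$ in dimension two, and on $\parc\Omega$ it equals $c^2$. Since $P$ is now bounded, Phragmén–Lindelöf in the strip applies directly and gives $P\leq c^2$, hence $u\leq c^2/2$. In the worst case one retains the cruder bound $u\leq F^2$ from Steps 1–2, so either way $||u||_{L^\8(\Omega)}\leq c^2/2+F^2$, which is \eqref{c0est}.
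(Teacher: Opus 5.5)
\textbf{The gap is Step 2, and everything rests on it.} Phragm\'en--Lindel\"of on the strip needs $v^+=o(e^{\pi|x_1|/(2F)})$, and neither of your two routes gives any growth bound at all.

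The Harnack route depends on the widths of the necks of $\Omega$ and on uniform boundary regularity. Neither is controlled at this stage: in the paper, non-collapsing of necks and bounded curvature of $\partial\Omega$ are only obtained in the proof of Theorem \ref{th:estc2}, which itself uses the present lemma through Corollary \ref{cor:c1est}. So this route yields no exponent, let alone one below $\pi/(2F)$.

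The averaging route does not close either. Using $u=0$ and $\nabla u=-c\nu$ on $\partial\Omega$, your computation gives the exact identity
\[
m''(t)=2c\sqrt{1+f'(t)^2}-4f(t),
\]
equivalently $m'(b)-m'(a)=c\,\mathrm{length}\bigl(\partial\Omega\cap\{a<x_1<b\}\bigr)-2\,|\Omega\cap\{a<x_1<b\}|$. This is not a differential inequality in $m$. Its source term is the arclength density of $\partial\Omega$, and nothing controls $f'$ yet, so nothing forces $m$ to grow polynomially.

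A smaller issue of the same kind: in Step 3, ``boundary Schauder estimates'' presuppose uniform boundary charts, which are also not yet available. The paper obtains $|Du|\le M$ from \cite{RSW2} only after boundedness is known.

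\textbf{The missing idea is to localize, so that no growth at infinity is ever needed.} The paper proceeds as follows.
\begin{enumerate}
\item Moving planes give $u_{x_2}<0$ for $x_2>0$, hence $u\le u(x_1,0)$.
\item Suppose $u(\cdot,0)>c^2/2$ on an interval $I$ with $|I|>2c$. Take the radial torsion solution on a disk of radius $c$ (maximum $c^2/2$), centered over the midpoint of $I$, and slide its graph in the $x_2$-direction into the upper half of $\Omega$. Over $I\times\{0\}$ the graph of $u$ lies above the cap, so the first contact must be interior or a boundary tangency with equal gradients. This contradicts the maximum principle or Hopf's lemma. Hence $\{u(\cdot,0)>c^2/2\}$ has no component longer than $2c$.
\item Any $x_1$ with $u(x_1,0)>c^2/2$ therefore lies in a bounded interval $(a_1,a_2)$ with $u(a_i,0)\le c^2/2$. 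On the bounded set $U=\Omega\cap\{a_1<x_1<a_2\}$, the harmonic function $u-c^2/2-F^2+x_2^2$ (essentially your barrier) is $\le 0$ on $\partial U$. The ordinary maximum principle then gives \eqref{c0est}.
\end{enumerate}

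Once boundedness is known, your Steps 1 and 3 are correct. They even give the sharper bounds $u\le F^2-x_2^2$ and, with Corollary \ref{cor:c1est}, $u\le c^2/2$.
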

\begin{proof} First we note that, by Serrin's moving planes method, $u_{x_2}<0$ for any $(x_1,x_2)\in \Omega$ with $x_2>0$. This implies, in particular, that $u(x_1,x_2)\leq u(x_1,0)$, so it suffices to prove \eqref{c0est} along the $x_1$-axis.

We first claim that it is not possible that $u(x_1,0)>c^2/2$ on an interval $I$ of length larger than $2c$. 

Indeed, otherwise let $u_0$ be the radial solution to $\Delta u_0+2=0$, defined on a disk $D_c$ of radius $c$ and center $(a,b)$, with $u_0=0$ and $|Du_0|=c$ along $\parc D_c$. Then, $||u_0||_{L^{\8}(D_c)}= c^2/2$. Call $\cG_0$ to the graph of $u_0$ over $D_c$, and $\cG_+$ to the graph of $u$ over $\Omega\cap \{x_2\geq 0\}$. We choose the center $(a,b)$ so that $a$ is the midpoint of $I$ and $b<0$ is large enough so that $\cG_0$ and $\cG_+$ are disjoint. Then, we can slide $\cG_0$ horizontally in the $x_2$-direction towards $\cG_+$ until reaching a first contact point between both graphs. Such a contact point must be either an interior or a boundary point for both $\cG_0$ and $\cG_+$ simultaneously.  Hence we contradict the maximum principle in any case. This proves the claim.

It is obvious that \eqref{c0est} holds at the points where $u<c^2/2$. Now, let $x_1\in \R$ so that $u(x_1,0)>c^2/2$, and consider values $a_1<x_1<a_2$ such that $u(a_1,0)$ and $u(a_2,0)$ are both $\leq c^2/2$; that these two values exist comes from the previously proved claim. Let $U$ denote the bounded domain $U:= \Omega\cap \{ a_1<x_1<a_2\}$.

Next, define $$w(x_1,x_2):= u(x_1,x_2)-\frac{c^2}{2} - f_0^2 +x_2^2,$$ where $f_0>0$ is the supremum of $f$ in $\R$. Then $w$ is a harmonic function that has negative values along $\parc U$. By the maximum principle, $w\leq 0$ in $U$. Thus, $u(x_1,0)$ must satisfy \eqref{c0est}. This completes the proof.
\end{proof}

\begin{remark}\label{rem:rossic}
It is clear by the comparison principle that $\Omega$ cannot contain a disk of radius $R>c$. This implies that, for any $R>c$, every component of $\{f>R\}$ has length at most $2R$. Therefore, the components of $\Omega\cap\{x_2>R\}$ are bounded. Now, by Ros-Sicbaldi \cite{RS}, Lemma 6.1, this implies that ${\rm sup}_{\R} \,f \leq 4c$.  
\end{remark}

\begin{corollary}\label{cor:c1est}
There exists some $M=M(c)>0$ such that $|D u|\leq M$ in $\Omega$.
\end{corollary}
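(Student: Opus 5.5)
The gradient bound will come from the $C^0$ bound of Lemma \ref{prop:height} combined with a $P$-function argument. Let $u$ solve \eqref{serrinproban} on the band $\Omega$. By Remark \ref{rem:rossic}, ${\rm sup}_\R f\leq 4c$, so Lemma \ref{prop:height} gives the $c$-dependent bound $||u||_{L^\8(\Omega)}\leq c^2/2+16c^2=:K(c)$. Consider the classical Weinberger $P$-function
$$P:=|Du|^2+4u.$$
For solutions of $\Delta u+2=0$ in dimension two, $P$ is subharmonic: by Bochner's formula
$$\Delta |Du|^2 = 2|D^2u|^2 + 2\langle Du, D\Delta u\rangle = 2|D^2u|^2\geq (\Delta u)^2=4,$$
and hence $\Delta P\geq 4-8<0$. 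This sign is wrong, so I will use the correct combination $P=|Du|^2+2\cdot\frac{2}{n}u$ with $n=2$, i.e. $P=|Du|^2+2u$. Then $\Delta P = 2|D^2u|^2-4\geq 0$, since $|D^2u|^2\geq \frac12 (\Delta u)^2=2$. On $\parc\Omega$ we have $P=c^2$.

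To bound $P$ we need a maximum principle, and here $\Omega$ is unbounded. Interior gradient estimates are available as a fallback. Since $u$ is bounded by $K$ and $\Delta u=-2$, for any point at distance $\geq 1$ from $\parc\Omega$ standard interior estimates give $|Du|\leq C(1+K)$ there. So the only delicate region is near the boundary, and the maximum principle argument avoids this issue entirely if $P$ is known to be bounded a priori. I would therefore first argue boundedness of $|Du|$ qualitatively. Alternatively, I would run the maximum principle for $P$ on truncated regions $U_R=\Omega\cap\{|x_1|<R\}$ using a Phragmén--Lindelöf type argument. The band has bounded width $\leq 8c$, so any subharmonic function on $\Omega$ with sub-exponential growth that is $\leq c^2$ on $\parc\Omega$ satisfies $P\leq c^2$. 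Concretely, compare with $c^2+\ep\, \cosh(\pi x_1/(16c))\cos(\pi x_2/(16c))$, which is harmonic and positive on the strip $|x_2|<8c$ and grows exponentially. It then suffices to know $P$ grows slower than $e^{\pi|x_1|/16c}$.

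The main obstacle is exactly this growth control of $|Du|$ near $\parc\Omega$. I would get it from the Serrin structure itself: along $\parc\Omega$, $|Du|=c$. Extend the bound inward using boundary Schauder estimates in local charts of size comparable to the local boundary regularity. A cruder route is enough, though. Since $f$ is real analytic, the boundary curves are locally smooth graphs, so on each compact piece $|Du|$ is finite. For the growth rate I would use the gradient bound $|Du(p)|\leq C\,(K+1)/d(p,\parc\Omega)$ in the interior, together with the fact that $u$ is trapped between $0$ and the one-dimensional barrier $x_2\mapsto f_0^2-x_2^2$ type comparisons. Once $P$ has at most polynomial growth, Phragmén--Lindelöf yields $P\leq c^2$, hence $|Du|^2\leq c^2$ and $M(c)=c$ works. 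If the growth control fails, I would instead bound $|Du|$ by $C(1+K)$ via interior estimates away from the boundary and handle the boundary strip by gradient estimates for harmonic functions $u+|x|^2/2$, vanishing-plus-quadratic on $\parc\Omega$, using barrier disks of radius $c$ tangent from outside.
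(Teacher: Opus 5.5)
Your $P$-function is correct: with $P=|Du|^2+2u$ one has $\Delta P=2|D^2u|^2-4\geq 0$ and $P=c^2$ on $\parc\Omega$. But the proof stops at the step you yourself flag, and none of your routes to the growth control works. Local boundary Schauder estimates in charts adapted to the local regularity only give finiteness of $|Du|$ on compact pieces. They give no rate as $|x_1|\to\infty$, because a priori nothing controls the curvature of $\parc\Omega$ or the size of such charts. The interior bound $C(1+K)/d(p,\parc\Omega)$ degenerates exactly where you need it. Exterior barrier disks of radius $c$, or any barrier turning $0\leq u\leq K$ into a gradient bound at $\parc\Omega$, require a uniform exterior disk condition that is not available; near a neck it amounts to a curvature bound. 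Moreover, uniform boundary control is what Theorem \ref{th:estc2} and Remark \ref{rem:estibon} establish later, and their blow-up argument uses $|Du|\leq M$ as input, so invoking it here would be circular. As written, Phragm\'en--Lindel\"of is never justified and the corollary is not proved.

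The missing idea is to trade pointwise growth for an integral bound, which the thinness of the band makes cheap. Take $\chi(x_1)$ with $\chi=1$ on $|x_1-a|<r$, $\chi=0$ on $|x_1-a|>2r$ and $|\chi'|\leq 2/r$. Test $\Delta u=-2$ against $\chi^2u$; there is no boundary term, since $u=0$ on $\parc\Omega$. Using $0\leq u\leq K$ and $\Omega\subset\{|x_2|<4c\}$ gives
\[
\int_{\Omega\cap\{|x_1-a|<r\}}|Du|^2\leq 4\int_\Omega \chi^2u+4\int_\Omega u^2(\chi')^2\leq C(c)\,r, \qquad r\geq 1.
\]
$P$ is continuous on $\overline\Omega$, subharmonic in $\Omega$ and equal to $c^2$ on $\parc\Omega$. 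Hence the function $s=(P-c^2)^+$, extended by $0$ outside $\Omega$, is subharmonic on $\R^2$. For $p=(a,b)\in\Omega$ and every $r\geq1$ this gives
\[
s(p)\leq \frac{1}{\pi r^2}\int_{B_r(p)\cap\Omega}P\leq \frac{C(c)}{r},
\]
so $s\equiv 0$, $P\leq c^2$ and $|Du|\leq c$. With this repair your argument is a genuine alternative to the paper's, which simply feeds the $L^\infty$ bound of Lemma \ref{prop:height} into Lemma 2.2 of \cite{RSW2}. Yours is self-contained and gives the sharp constant $M=c$. It relies on the special structure of the torsion equation in dimension two and on the linear area growth of $B_r(p)\cap\Omega$, whereas the cited lemma needs only the height bound.
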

\begin{proof}
Since $u$ is bounded in $\Omega$ by Lemma \ref{prop:height}, the result follows directly from \cite{RSW2}, Lemma 2.2. Note that the bound $M$ only depends on $c$, not on $(u,\Omega)$.
\end{proof}

\subsection{Proof of Theorem \ref{th:estc2}} 
The proof is an adaptation of the curvature estimate of Ros, Ruiz and Sicbaldi, \cite[Prop. 3.1]{RRS1}. We first discuss some geometric facts we need to adapt their argument to our situation. The main difference is that in our case $\Omega$ has two boundary components. On the other hand, we can use the reflection symmetry of $\Omega$ to work only on the upper region $\Omega^+$ of the band. This will simplify the control of the blow-up limit. 

We start by writing
\[
\Omega^+=\{(x,y):0<y<f(x)\},\qquad
\sigma=\{(x,f(x)):x\in\mathbb R\},\qquad
L=\{y=0\}.
\]
By symmetry, the restriction of \(u\) to \(\Omega^+\) satisfies
\[
\Delta u=-2,\qquad
u=0,\quad \partial_\nu u=-c\quad\text{on }\sigma,
\qquad
\partial_yu=0\quad\text{on }L.
\]
We already know that \(u\) and \(Du\) are bounded; write \(|Du|\le M\).

\begin{lemma}\label{le35}
Let $p\in\sigma$ and $D_R(p)$ a disk centered at $p$. If $V$ is the component of $\Omega^+\cap D_R(p)$ adjacent to $p$, then
$
\partial V\cap\sigma\cap D_R(p)
$
is connected.
\end{lemma}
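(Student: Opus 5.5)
We argue by contradiction, in the spirit of \cite[Lemma 3.4]{RRS1}. Suppose $\partial V\cap\sigma\cap D_R(p)$ is disconnected. Since $\sigma$ is the graph of $f$ over the $x_1$-axis, the set $\sigma\cap D_R(p)$ is a union of open arcs of the graph. Let $\beta_0$ be the arc containing $p$, and let $\beta_1$ be another arc of $\sigma\cap D_R(p)$ contained in $\partial V$. Choose a point $q_1\in\beta_1$, and join $p$ to $q_1$ by a simple arc $\Gamma\subset V\cup\{p,q_1\}$. Also let $\tau$ be the sub-arc of $\sigma$ between $p$ and $q_1$. By the disconnectedness assumption, $\tau$ must leave $D_R(p)$; otherwise $p$ and $q_1$ would lie on the same component of $\sigma\cap D_R(p)$. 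The Jordan curve $\Gamma\cup\tau$ bounds a bounded domain $W$.

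The first step is to locate $W$. Since $\tau$ is a graph arc and $\Omega^+$ lies below $\sigma$, I will check that $W$ lies on the side of $\tau$ outside $\overline{\Omega}$, that is, above the graph $\sigma$. Indeed, $\Gamma$ lies inside $\Omega^+$ and leaves $\sigma$ from below at $p$ and $q_1$. The region enclosed by $\Gamma\cup\tau$ therefore contains the points just above $\tau$, so $W$ contains the region $\{f(x)<y\}$ locally along $\tau$. I would make this precise with an orientation or winding argument for the graph $\tau$ together with the arc $\Gamma$.

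The second step uses that $\tau$ exits $D_R(p)$. Then there is a vertical segment $\{x=x_0,\ f(x_0)<y<y_1\}$ in $W$ starting at a point of $\tau$ outside $\overline{D_R(p)}$, since $\tau$ reaches points outside $\overline{D_R(p)}$. Follow this vertical half-line upward. It must leave the bounded set $W$, and it cannot cross $\tau$ again, because $\tau$ is a graph. So it must cross $\Gamma\subset D_R(p)$. Any point of $\Gamma$ lies in $\Omega^+$, so it sits below the graph, $y<f(x_0)$. But the half-line consists of points with $y>f(x_0)$. This is a contradiction.

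The main obstacle is the rigorous topological bookkeeping: showing that $W$ lies above $\tau$ and handling the case where the vertical line through a point of $\tau$ outside the disk meets $\Gamma$ in a complicated way. I would simplify this by replacing the vertical-line argument with the observation that $\Gamma\subset\{y<f(x)\}$. Thus $\Gamma\cup\tau$ is contained in the closed subgraph $\{y\le f(x)\}$. The unbounded component of the complement of $\Gamma\cup\tau$ therefore contains the whole open supergraph $\{y>f(x)\}$, which is connected. Hence $W$ is contained in $\{y\le f(x)\}$, and in particular $W\subset\Omega^+$ after accounting for the line $L$. Since $\Gamma\subset D_R(p)$ while $\tau\not\subset D_R(p)$, the intermediate region $W$ contains points of $\Omega^+$ near $\tau\setminus D_R(p)$, and these connect to $V$ through $W$. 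I then need to derive a contradiction with the fact that $\beta_0$ and $\beta_1$ are separated inside the disk. I expect the cleanest route is to take $\Gamma$ as a shortest path in $V$ and apply the Jordan curve theorem carefully.
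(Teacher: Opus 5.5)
\textbf{There is a genuine gap: the argument is purely topological, and the statement is not.} Your outline uses only that $\Omega^+$ is the region under a graph and never touches the solution $u$. The lemma is false for general subgraph regions.

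Take $f(x)=1+10e^{-x^2/\epsilon^2}$ with $\epsilon>0$ small. Let $p$ be the point of the left flank of the spike at height $3/2$, and take $R=1$.
\begin{itemize}
\item The tip of the spike (height $11$) lies outside $D_1(p)$, while both flanks cross the disk.
\item So $\sigma\cap D_1(p)$ consists of two arcs: the left base together with the left flank, and the right flank together with the right base.
\item The set $\{0<y<f(x)\}\cap D_1(p)$ is connected, being the region between two continuous graphs over an interval. It contains the part of the spike inside the disk.
\end{itemize}
Hence the component $V$ adjacent to $p$ has both arcs in its boundary. Any proof must therefore use the overdetermined problem. The paper gives no independent argument here; it defers to an adaptation of \cite[Lemmas 2.3--2.4]{RRS1}, and that is where this analytic input has to come from.

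\textbf{Your first step is wrong.} Since $\Gamma\subset\Omega^+$ and $\tau\subset\sigma$, the Jordan curve $\Gamma\cup\tau$ lies in $\{0<y\le f(x)\}$. So the connected unbounded sets $\{y>f(x)\}$ and $\{y\le 0\}$ lie in the unbounded complementary component, and $W\subset\Omega^+$. In the example above, $W$ is the piece of the spike cut off by $\Gamma$. Your vertical half-line argument in the second step therefore only refutes your first step, not the disconnectedness hypothesis.

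\textbf{Your last paragraph leaves nothing to contradict.} It correctly places $W$ in $\Omega^+$, but then no contradiction is available. Points of $W$ outside $D_R(p)$ are joined to $V$ only by paths that leave the disk. That is perfectly compatible with $V$ being a component of $\Omega^+\cap D_R(p)$, and choosing $\Gamma$ shortest or applying the Jordan curve theorem more carefully does not change this.

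\textbf{What is missing} is an analytic argument ruling out such a \emph{tongue}. This is a bounded region $W\subset\Omega^+$ bounded by a cross-cut $\Gamma\subset D_R(p)$ and an arc $\tau\subset\sigma$ that leaves $D_R(p)$, with $u=0$ and $\partial_\nu u=-c$ along $\tau$. Excluding it requires properties of $u$, for instance the maximum principle, reflection arguments, or flux identities using the overdetermined data.
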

\begin{proof}
It is a straightforward adaptation of Lemmas 2.3 and 2.4 of \cite{RRS1}.
\end{proof}
The main step of the proof is:

\begin{claim}\label{claim1}
The geodesic curvature of $\sigma$ is bounded.
\end{claim}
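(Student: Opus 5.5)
We argue by contradiction through a blow-up argument, following the scheme of \cite[Prop. 3.1]{RRS1}. Suppose there are points $p_n\in\sigma$ with geodesic curvature $|\kappa(p_n)|\to\8$. A standard point-picking argument (replacing $p_n$ by nearby points maximizing $|\kappa|\,d(\cdot,\parc D_{r_n}(p_n))$ on $\sigma\cap D_{r_n}(p_n)$) lets us assume that $\lambda_n:=|\kappa(p_n)|\to\8$ and that $|\kappa|\le 2\lambda_n$ on $\sigma\cap D_{R_n/\lambda_n}(p_n)$ with $R_n\to\8$. We then rescale:
$$\Omega_n:=\lambda_n(\Omega^+-p_n),\qquad u_n(x):=\lambda_n\, u(p_n+x/\lambda_n).$$
These satisfy $\Delta u_n=-2/\lambda_n\to 0$, $u_n=0$ and $\parc_\nu u_n=-c$ on $\sigma_n:=\lambda_n(\sigma-p_n)$, and $\parc_y u_n=0$ on the rescaled line $L_n$. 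By Corollary \ref{cor:c1est}, $|Du_n|\le M$. The curvature of $\sigma_n$ is bounded by $2$ on $D_{R_n}(0)$ and equals $1$ at the origin. By Lemma \ref{le35}, the component of $\sigma_n$ through $0$ inside large disks is a single arc. Hence, after passing to a subsequence, $\sigma_n$ converges in $C^{1,\alpha}$, and then in $C^{2,\alpha}$ by elliptic boundary regularity for the one-phase problem, to a complete embedded curve $\sigma_\8$ with curvature $1$ at the origin. Likewise $u_n\to u_\8$, a nonnegative harmonic function with $u_\8=0$ and $|Du_\8|=c$ on $\sigma_\8$.

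Next we identify the limit domain. There are two cases, according to whether $d(p_n,L)\lambda_n$ stays bounded or not. If it stays bounded, then $L_n$ converges to a line $L_\8$, and reflecting across it gives a domain with two boundary components. If it diverges, the limit is a domain $\Omega_\8$ bounded by $\sigma_\8$ together with possibly other limit boundary arcs. In either case, after reflection when needed, we obtain a solution to the overdetermined harmonic problem \eqref{hapi} on a planar domain of finite topology, and $u_\8>0$ by the maximum principle since $|Du_\8|=c\neq 0$ on the boundary. Traizet's classification \cite{Tr} then says that each component is a half-plane, the complement of a disk, or a hairpin $\Omega_0$ up to similarity. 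The half-plane is ruled out because $\sigma_\8$ has nonzero curvature at the origin. The complement of a disk is ruled out because $u_\8$ would grow logarithmically, which is incompatible with the topology inherited from the band: the region $\Omega^+$ lies between $\sigma$ and $L$, so its blow-up lies in a half-plane or a strip. That leaves the hairpin. In the reflected case, a hairpin touching $L_\8$ must be compatible with the symmetry, and we exclude this separately.

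The main obstacle is excluding the hairpin, and we would do it with the capillary flux. By the independence of section, $\cF(\Omega)$ is a fixed number, computed via \eqref{fluxsi} on any vertical segment. At scale $\lambda_n$, the flux of $(\Omega_n,u_n)$ over a transversal section equals $\lambda_n\cF(\Omega)+o(1)$ or something similarly homogeneous. Since the stress tensor is quadratic in $Du$, the flux scales like length, so it tends to $0$ after rescaling lengths by $\lambda_n$. This holds provided a transversal section of $\Omega$ through the neck near $p_n$ has bounded length, which we get from Remark \ref{rem:rossic}, since $\sup f\le 4c$. On the other hand, the section of $\Omega_n$ crossing the neck of the hairpin converges to the hairpin's section, whose flux is $-c^2\pi\neq 0$ by \eqref{herflux}. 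The delicate point is to ensure that the full transversal section of $\Omega$ through $p_n$, namely the vertical segment from $L$ to $\sigma$ doubled by symmetry, is captured by the rescaled hairpin neck with negligible contribution outside. We would use the monotonicity $u_{x_2}<0$ and the bigraph structure to show that the vertical segment over the point of $\sigma_n$ where the hairpin turns lies entirely in the limit neck region, with contributions outside bounded by $M^2$ times a vanishing length. This gives the contradiction $-c^2\pi=0$, which proves Claim \ref{claim1}.
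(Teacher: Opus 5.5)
Your outline follows the paper's strategy: blow up at points of large curvature, apply Traizet's classification, and use the capillary flux against the hairpin. But it omits a step the argument cannot do without. When $\lambda_n d(p_n,L)$ stays bounded (so $L_n=\{x_2=d_n\}$ converges to $L_\infty$), you reflect the limit across $L_\infty$ and invoke Traizet. You never show that $\sigma_\infty$ is disjoint from $L_\infty$; ``we exclude this separately'' is only a placeholder. If $\sigma_\infty$ touched $L_\infty$ (necessarily tangentially, since $\sigma_\infty$ lies above it), the two boundary curves of the reflected limit would touch. The reflected limit would then not be a smooth domain, and Traizet's theorem could not be applied.

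The paper closes this with a divergence-theorem estimate in the thin region between $L_n$ and $\sigma_n$. Near a contact point, write $\sigma_n$ as $x_2=d_n+h_n(x_1)$, $|x_1|<r$, with $h_n(0)\to0$, $h_n'(0)\to0$ and $|h_n''|\le C$. Integrate $\Delta u_n=-2/\lambda_n$ over $\{|x_1|<r,\ d_n<x_2<d_n+h_n(x_1)\}$:
the top contributes $c$ times a length at least $2r$;
the bottom contributes nothing, since $\partial_{x_2}u_n=0$ on $L_n$;
the vertical sides contribute at most $M\bigl(h_n(r)+h_n(-r)\bigr)\le M\bigl(2h_n(0)+Cr^2\bigr)$;
the source term is $O(1/\lambda_n)$.
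Letting $n\to\infty$ gives $2cr\le MCr^2$, which is impossible for small $r$.

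The flux step is also misstated. With $u_n=\lambda_n u(p_n+\cdot/\lambda_n)$, the stress tensor is pointwise invariant (the term $2H_nu_n$ equals $-2u$), while arclength is multiplied by $\lambda_n$. Hence $\mathcal{F}(\Omega_n)=\lambda_n\mathcal{F}(\Omega)$ exactly. This does not tend to $0$: it is either $0$ for all $n$ or unbounded. In both cases it cannot converge to $-c^2\pi$, so the contradiction survives, but not for the reason you give, and $\sup f\le 4c$ plays no role. Your ``delicate point'' also disappears. By section-independence you may compute $\mathcal{F}(\Omega_n)$ on any transversal arcs $\gamma_n$ of the reflected band converging to a fixed compact transversal arc of the hairpin. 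No monotonicity or control of contributions outside the neck is needed.

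Finally, in the case $\lambda_n d(p_n,L)\to\infty$, your flux comparison does not apply. There the lower boundary component of the band lies below $L_n$ and leaves every compact set, so no transversal section of the band converges to one of a putative hairpin limit. The hairpin must be excluded differently. The paper uses that the limit has only one boundary component. Alternatively, an arc joining $\sigma_n$ to itself cuts off a bounded region and hence carries zero flux; this passes to the limit and contradicts $-c^2\pi\ne0$. Likewise, in that case the complement of a disk is excluded because $\sigma_\infty$ is unbounded. Your reason, that the blow-up lies in a half-plane or strip, is not justified there.
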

\begin{proof}
Suppose $\sigma$ has unbounded curvature unbounded. Arguing as in the proof of \cite[Prop. 3.1]{RRS1}, we obtain points \(p_n\in\sigma\) and factors \(\lambda_n=|\kappa(p_n)|\to\infty\) such that a subsequence of the rescaled curves $
\sigma_n=\lambda_n(\sigma-p_n),$ which have $|\kappa_n(0)|=1$, 
converges \(C^{2,\alpha}\) on compact arclength intervals to a properly embedded curve $\Gamma_\infty$ satisfying
$$
0\in\Gamma_\infty,\qquad
|\kappa_\infty|\le1,\qquad
|\kappa_\infty(0)|=1.
$$
Define
$\Omega_n=\lambda_n(\Omega-p_n),$ and let $L_n$ denote the central symmetry line of $\Omega_n$, which is given by $L_n=\{x_2= d_n\}$ for some $d_n<0$. Consider also the associated functions 
\begin{equation}\label{defvnq}
u_n(z)=\lambda_nu\!\left(p_n+\frac z{\lambda_n}\right).
\end{equation}
Thus \(\Delta u_n=-2/\lambda_n\), \(|Du_n|\le M\), and
\begin{equation}\label{boccii}
\partial_\nu u_n=-c\quad\text{on }\sigma_n,
\qquad
\partial_{x_2}u_n=0\quad\text{on }L_n.
\end{equation}

If $d_n\to -\8$, we can argue as in steps 5-6 of  \cite[Prop. 3.1]{RRS1} and obtain a harmonic function $u_{\8}\in C^{2,\alfa}(\Omega_{\8})$ that satisfies $u_{\8}\geq 0$, $|\kappa_{\8}(0)|=1$ and the overdetermined boundary conditions
\begin{equation}\label{serrininf}
u_\8=0, \hspace{0.5cm} \frac{\parc u_{\8}}{\parc \nu} = -c \hspace{0.5cm} \text{ along $\parc \Omega_{\8}.$} 
\end{equation} By Traizet's theorem \cite{Tr}, $\Omega_{\8}$ is a half-plane, the complement of a disk, or a hairpin domain. The half-plane situation is impossible since $|\kappa_{\8}(0)|=1$. Also, $\Omega_{\8}$ is not the complement of a disk, since $\parc \Omega_\8=\Gamma_{\8}$ is unbounded. At last, $\Omega_{\8}$ cannot be a hairpin, since it only has one boundary component. 

Thus, we can assume up to subsequence that $d_n\to d_{\8}\leq 0$. Set $L_\infty:=\{x_2=d_\infty\}$. We claim that
\begin{equation}\label{nointere}
\Gamma_\infty\cap L_\infty=\varnothing.
\end{equation}
Indeed, suppose that $q$ belongs to this intersection. Since $\Gamma_\infty\subset\{x_2\ge d_\infty\}$, its tangent line at $q$ is horizontal.
Hence, after horizontally translating $q$ to $x_1=0$, it follows from our uniform geodesic curvature estimates and the convergence $(\sigma_n)_n\to \Gamma_{\8}$ that suitable arcs of $\sigma_n$ can be written as graphs $
x_2=d_n+h_n(x_1),$ $|x_1|<r_0,$
where
\begin{equation}\label{deribo}
h_n>0,\qquad h_n(0)\to0,\qquad h_n'(0)\to0,
\qquad |h_n''|\le C,
\end{equation}
with \(r_0,C\) independent of \(n\). 

For a fixed \(0<r<r_0\), apply the divergence theorem to $\nabla u_n$ on the domain
\begin{equation}\label{anr}
A_n(r)=\{(x_1,x_2):|x_1|<r,\ d_n<x_2<d_n+h_n(x_1)\}.
\end{equation}
By \eqref{boccii}, the flux through its lower side vanishes, while the exterior normal derivative on its upper side equals \(-c\). Consequently, if $I_1,I_2$ are the vertical sides of $A_n(r)$,
\[
c\int_{-r}^{r}\sqrt{1+h_n'(x_1)^2}\,dx_1
=
\frac{2}{\lambda_n}|A_n(r)|
+\int_{I_1\cup I_2}\partial_\nu u_n
\le
\frac{2}{\lambda_n}|A_n(r)|
+M\bigl(h_n(-r)+h_n(r)\bigr),
\] where we have used $|Du_n|\leq M$.
From \eqref{deribo} we have
$
h_n(-r)+h_n(r)\le 2h_n(0)+Cr^2.
$
Moreover, \(|A_n(r)|\) remains bounded. Letting \(n\to\infty\), we obtain
$
2cr\le MCr^2,
$
which is impossible for sufficiently small $r>0$. This contradiction proves \eqref{nointere}.

Since $\Gamma_\8$ is a properly embedded curve, it divides the plane into two connected components. In this way, by \eqref{nointere}, we can define $\Omega_\8$ as the region bounded by $\Gamma_{\8}$ and its reflection with respect to $L_\infty$. Then, arguing as in the case $d_n\to -\8$, we obtain a harmonic function $u_{\8}\in C^{2,\alfa}(\Omega_{\8})$ that satisfies $u_{\8}\geq 0$, $|\kappa_{\8}(0)|=1$ and the overdetermined boundary conditions \eqref{serrininf}. By Traizet's theorem \cite{Tr}, $\Omega_{\8}$ is a hairpin domain (it has two boundary components).

In order to rule out this blow-up behavior we will use the notion of capillary flux. Recall that, $\Omega_{\8}$ being a hairpin domain, it has non-zero capillary flux $\cF(\Omega_{\8})=(w,0)$, with $w\in \R$, $w\neq 0$; see \eqref{herflux}.

On the other hand, we can compute using \eqref{fluxsi} the capillary fluxes of the bands $\Omega_n$. Here we recall that the functions $u_n$ are defined by \eqref{defvnq} and satisfy $\Delta u_n=-2/\landa_n$, and that $\Omega_n=\landa_n (\Omega- p_n)$. So, from \eqref{fluxsi} we easily compute $$\cF(\Omega_n)=\landa_n\, \cF(\Omega),$$ where $\cF(\Omega)\in \R^2$ is the (finite) flux of our original Serrin band. (To be precise, we use \eqref{fluxsi} to compute the capillary flux of the translated band $\Omega_n+\landa_n p_n$, which  is symmetric with respect to the $x_1$-axis and has the same flux as $\Omega_n$).

Now, take compact transversal arcs $\gamma_n$ in the bands $\Omega_n$ that converge to a compact transversal arc $\gamma$ of the hairpin $\Omega_{\8}$. Then, the integrals in \eqref{defiflux} along the arcs $\gamma_n$ converge to the corresponding integral along $\gamma$. By the invariance of the capillary flux, this means that $\cF(\Omega_n)= \landa_n \,\cF(\Omega)$ must converge to $\cF(\Omega_{\8})$, which is a non-zero vector by \eqref{herflux}. This is impossible, since $\landa_n\to \8$. This contradiction rules out Case 2, and completes the proof of Claim \ref{claim1}.
\end{proof}
We now complete the proof of Theorem \ref{th:estc2}. We first show that the necks of a Serrin band do not collapse, i.e., $
\delta:=\inf_{\mathbb R}f$ is positive.

Arguing by contradiction, assume $\delta=0$, and choose $a_n\in\mathbb R$ such that $f(a_n)\to0$. Recall that $x_2=f(x_1)$ has bounded curvature, by Claim \ref{claim1}. Then, arguing as in \eqref{deribo}, there exist \(r_0,C>0\), independent of \(n\), such that
$
h_n(x_1):=f(a_n+x_1)$, $|x_1|<r_0$,
satisfies
$$
h_n>0,\qquad h_n(0)\to0,\qquad h_n'(0)\to0,
\qquad |h_n''|\le C.$$
Let $v_n(x_1,x_2):=u(a_n+x_1,x_2)$, and define in analogy with \eqref{anr} the domain 
\begin{equation}\label{anr2}
A_n(r)=\{(x_1,x_2):|x_1|<r,\ 0<x_2<h_n(x_1)\}.
\end{equation}
The same argument as the one following \eqref{anr} gives in our situation $$
2cr\le MC r^2+\frac{2C}{3}r^3,
$$
which is again impossible for sufficiently small $r>0$. Thus $\delta>0$.

Now, Claim \ref{claim1}, Lemma \ref{le35} and the graph estimates of \cite[Lemma 2.5]{RRS1} provide boundary charts of fixed radius $\varrho>0$, with uniformly bounded $C^{1,\alpha}$ norms. Since $\delta>0$, after choosing if necessary a smaller $\varrho>0$, each of these charts meets only one boundary component. Applying \cite[Lemma 2.5]{RRS1}, we obtain uniform $C^{2,\alpha}$ estimates up to the boundary. Interior elliptic estimates give the corresponding bounds away from the boundary. Consequently,
$\|u\|_{C^{2,\alpha}(\overline\Omega)}<\infty$ for every $\alpha\in(0,1)$, which completes the proof of Theorem \ref{th:estc2}.

\begin{remark}\label{rem:estibon}
In the proof of Theorem \ref{th:estc2} we have also established that, for any Serrin band $\Omega\subset \R^2$, it holds:
\begin{enumerate}
 \item
The distance between the two boundary components of $\parc \Omega$ is positive.
\item
$\parc \Omega$ is smooth (actually, real analytic), and has bounded geodesic curvature. This implies that $\parc \Omega$ has uniformly bounded $C^{k,\alfa}$ geometry on compact sets. 
\end{enumerate}
\end{remark}

From our previous arguments, we can easily establish a compactness result for the space of Serrin bands with fixed Neumann constant and non-collapsing necksizes.

\begin{corollary}\label{compaa}
Let $(\Omega_n)_n$ be a sequence of Serrin bands 
$$\Omega_n:=\{(x_1,x_2): |x_2|< f_n(x_1)\}\subset \R^2$$ with associated Neumann constant $c>0$, independent of $n$. Assume $f_n(x)\geq \delta>0$ for all $n\in \N$ and $x\in \R$, and some constant $\delta>0$. 
Then a subsequence of the $\Omega_n$ converges smoothly on compact sets to a Serrin band $\Omega_{\8}\subset \R^2$.
\end{corollary}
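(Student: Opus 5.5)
The plan is to obtain uniform estimates for the whole sequence, pass to a limit by Arzelà--Ascoli, and then check that the limit is again a Serrin band with the same constant $c$ and neck size at least $\delta$. The estimates of Section \ref{sec:apriori} depend on $\Omega$ only through $c$ and the lower bound on the necks. More precisely: by Remark \ref{rem:rossic}, $\sup f_n\le 4c$. By Lemma \ref{prop:height}, $\|u_n\|_{L^\8}\le c^2/2+16c^2$. By Corollary \ref{cor:c1est}, $|Du_n|\le M(c)$.

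The key step is a curvature bound for the boundary curves $x_2=f_n(x_1)$ that is uniform in $n$. I would rerun the blow-up argument of Claim \ref{claim1} along a sequence: suppose there are points $p_n\in\sigma_n$ (now with varying bands) where $\lambda_n=|\kappa_n(p_n)|\to\8$. Rescaling as in \eqref{defvnq}, the limit $\Omega_\8$ is either a one-component domain, which is excluded by Traizet's theorem exactly as before, or a hairpin. In the hairpin case the neck argument following \eqref{anr} still applies, since it used only $M$ and $c$. The flux argument needs adapting, because the fluxes $\cF(\Omega_n)$ now vary with $n$. I would bound them uniformly using \eqref{fluxsi}: $|\cF(\Omega_n)|\le c^2\cdot 4c+4c\,(M^2+4\sup u_n)$, which depends only on $c$. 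Then $\lambda_n\cF(\Omega_n)\to\cF(\Omega_\8)=(-c^2\pi,0)\neq0$ forces $|\cF(\Omega_n)|\gtrsim 1/\lambda_n$, which is not yet a contradiction. This is the main obstacle. I would get around it by working directly at the blown-up scale: there $|\cF|$ is of order $\lambda_n|\cF(\Omega_n)|$. Alternatively, and more simply, I would note that a hairpin limit forces the rescaled neck width $\lambda_n f_n(x_n)$ to stay bounded. Hence $f_n(x_n)\to0$, contradicting $f_n\ge\delta$. I would use this second route, since it is exactly where the hypothesis $f_n\ge\delta$ enters.

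With uniform curvature bounds and $f_n\ge\delta$, Lemma \ref{le35} and \cite[Lemma 2.5]{RRS1} give boundary charts of uniform radius and uniform $C^{2,\alpha}$ bounds on $u_n$ up to the boundary; bootstrapping gives uniform $C^{k,\alpha}$ bounds. Arzelà--Ascoli and a diagonal argument then give a subsequence with $f_n\to f_\8$ in $C^k_{loc}(\R)$, where $\delta\le f_\8\le4c$, and $u_n\to u_\8$ smoothly on compact subsets of $\overline{\Omega_\8}$.

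The limit $u_\8$ satisfies $\Delta u_\8+2=0$, $u_\8=0$ and $\partial_\nu u_\8=-c$ on $\parc\Omega_\8$, and $u_\8\ge0$. Since $u_\8$ is superharmonic and not identically zero, the strong maximum principle gives $u_\8>0$ in $\Omega_\8$. Hence $\Omega_\8=\{|x_2|<f_\8(x_1)\}$ is a Serrin band.
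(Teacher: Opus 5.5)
Your proposal is correct and follows the paper's outline. The paper likewise takes uniform $C^0$ and $C^1$ bounds depending only on $c$ and reruns the curvature blow-up from the proof of Theorem \ref{th:estc2}, excluding one-component limits by Traizet's theorem and the hairpin alternative, which requires $|d_n|=\lambda_n f_n(x_n)$ to stay bounded, by $f_n\geq\delta$; it then concludes by Arzel\`a--Ascoli using $\delta\le f_n\le 4c$. You are also right that the flux argument alone does not close the hairpin case for a varying sequence, which is why the neck hypothesis is needed.
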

\begin{proof}
The result is essentially a consequence of the proof of Theorem \ref{th:estc2}, so we just give an outline. Let $\Theta$ be the family of Serrin bands in the conditions of Corollary \ref{compaa}. Since $c>0$ is fixed, Lemma \ref{prop:height} and Corollary \ref{cor:c1est} give uniform $C^0$ and $C^1$ estimates across $\Theta$. Now, the proof of Theorem \ref{th:estc2} shows we also have uniform $C^2$ estimates, unless a blow-up subsequence of suitable dilated and translated domains $\hat{\Omega}_n=\landa_n (\Omega_n +p_n)$ converge to a hairpin domain $\Omega_0$. But this is ruled out by the condition $f_n\geq \delta>0$. 
Thus, we have uniform $C^2$-estimates across $\Theta$. Also, ${\rm sup} \{f_n(x): n\in \N, x\in \R\}$ is finite; see Remark \ref{rem:rossic}. From here, a standard Arzela-Ascoli argument of elliptic theory ensures a subsequence of the domains $\Omega_n$ converges to some Serrin domain $\Omega_{\8}$. By the uniform upper and lower bounds on $f_n$, we conclude $\Omega_{\8}$ has two boundary components, and so it is itself a Serrin band. This completes the proof.
\end{proof}
\section{Conformal geometry of Serrin bands}\label{sec:confo}

\subsection{Conformal parameters for Serrin bands}

Let $\Omega\subset \R^2$ be a Serrin band, and let $u(x_1,x_2)$ be its associated solution to $\Delta u+2=0$. By uniformization, there exists a bijective holomorphic map $g(z):\overline\cU\subset \C\flecha \overline\Omega$, where $\cU=\R\times (-y^*,y^*)\subset \C$ is a horizontal slab. Here, we identify $z=x+iy$ with $(x,y)\in \R^2$. We say that $z$ is a \emph{conformal parameter} for the Serrin band $\Omega$. Without loss of generality, we assume that $g(x,y^*)$ lies in the \emph{upper} boundary component of $\parc \Omega$. In this way, on a fixed horizontal band $\cU$, the map $g(z):\cU\flecha \Omega$ is unique up to horizontal translations $z\mapsto z+x_0$ in the parameter, with $x_0\in \R$.

By item (ii) of Remark \ref{rem:estibon} and standard boundary regularity theorems of complex analysis (see e.g. Theorem 3.6 in Pommerenke's book), we deduce that 
\begin{equation}\label{pome}
{\rm sup}_\cU \, |g^{(j)}| <\8, \hspace{0.5cm} \text{for every $j\geq 1$}.
\end{equation} 
Here we remark that Theorem 3.6 in Pommerenke is written for $\cU=\D$, the unit disk, but the proof works in our case due to our boundary estimates on the Serrin band $\Omega$ (item (ii) of Remark \ref{rem:estibon} above).

In this setting, we consider on $\overline\cU$ the harmonic function
\begin{equation}\label{def:ome}
\omega(x,y):=\log |g'(z)|,
\end{equation} which satisfies $2\omega_z = g''/g'$. The function
\begin{equation}\label{defuve}
v(x,y):=u(g(x+iy))
\end{equation} 
is a solution to
\begin{equation}\label{pdeom}
\Delta v +2 e^{2\omega}=0,
\end{equation} 
where $\Delta v= v_{xx}+v_{yy}$. From here,
\begin{equation}\label{def:q2}
\mathfrak{q}:=v_{zz}-2\omega_z v_z
\end{equation}
is holomorphic on $\overline\cU$. We call $\mathfrak{q}$ the \emph{Hopf differential} associated to $u$ in the conformal parameter $z=x+iy$. The Serrin overdetermined conditions imply that 
\begin{equation}\label{imcu}
{\rm Im} (\mathfrak{q})=0 \hspace{0.5cm} \text{along $\parc \cU$.}
\end{equation}
As a consequence of the curvature estimate in Theorem \ref{th:estc2}, we have:

\begin{proposition}\label{prop:qconst}
Let $\Omega$ be a Serrin band and $z$ a conformal parameter for $\Omega$. Then 
\begin{equation}\label{qcons}
\mathfrak{q} = {\rm const} \in \R.
\end{equation} 
\end{proposition}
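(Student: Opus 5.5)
The plan is to show that $\mathfrak{q}$ is a bounded holomorphic function on the closed strip $\overline\cU$ whose imaginary part vanishes on $\parc\cU$, and then conclude it is a real constant by reflection and Liouville.

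\textbf{Step 1: bound $\mathfrak{q}$ on $\overline\cU$.} Write $\mathfrak{q}=v_{zz}-2\omega_z v_z$ with $v=u\circ g$. By the chain rule,
$$v_z=u_\zeta\, g', \qquad v_{zz}=u_{\zeta\zeta}(g')^2+u_\zeta g'',$$
where $\zeta=x_1+ix_2$ and $u_\zeta$, $u_{\zeta\zeta}$ are the complex derivatives of $u$ (the mixed term $u_{\zeta\bar\zeta}$ does not appear in $v_{zz}$ since $g$ is holomorphic). Since $2\omega_z=g''/g'$, the $u_\zeta g''$ terms cancel and
$$\mathfrak{q}=u_{\zeta\zeta}(g(z))\,g'(z)^2 .$$
This is the usual tensorial behaviour of the Hopf differential. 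Theorem \ref{th:estc2} gives $\sup_\Omega|D^2u|<\8$, and \eqref{pome} with $j=1$ gives $\sup_\cU|g'|<\8$. Hence $\mathfrak{q}$ is bounded and continuous on $\overline\cU$ and holomorphic in $\cU$; by the higher estimates it is smooth up to the boundary.

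\textbf{Step 2: reflection.} By \eqref{imcu}, ${\rm Im}\,\mathfrak{q}=0$ on both lines $y=\pm y^*$. Schwarz reflection across $y=y^*$ and $y=-y^*$ then extends $\mathfrak{q}$ to an entire function $Q$. The extension is $4y^*$-periodic in the $y$-direction, namely $Q(z+4iy^*)=Q(z)$, because composing the two reflections is a translation by $\pm 4iy^*$. Each reflected copy has the same modulus bound as $\mathfrak{q}$, so $Q$ is bounded on $\C$.

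\textbf{Step 3: conclude.} By Liouville's theorem $Q$ is constant, and since it is real on $\parc\cU$ the constant is real. This proves \eqref{qcons}.

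\textbf{Main obstacle.} The substantive input is entirely in Step 1, namely the uniform $C^2$ bound on $u$ and the bound on $|g'|$. Both have already been established, in Theorem \ref{th:estc2} and \eqref{pome}. The only remaining point to check is that \eqref{imcu} holds on the whole closed boundary, which requires $\mathfrak{q}$ to be continuous up to $\parc\cU$. This follows from the global boundary regularity recorded in Remark \ref{rem:estibon} and \eqref{pome}.
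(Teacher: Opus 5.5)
Your proposal is correct and follows essentially the same argument as the paper: you write $\mathfrak{q}=(u_{\zeta\zeta}\circ g)(g')^2$, bound it using Theorem \ref{th:estc2} and \eqref{pome}, and conclude by \eqref{imcu}, Schwarz reflection and Liouville's theorem. The only difference is that you spell out details the paper leaves implicit: the $4iy^*$-periodic entire extension, and the continuity of $\mathfrak{q}$ up to $\parc\cU$ needed for the reflection.
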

\begin{proof}
Write $u=u(x_1,x_2)$ and $w=x_1+ix_2$. Then, from $v=u\circ g$ and $2\omega_z=g''/g'$ we have $$\mathfrak{q}=v_{zz}-\frac{g''}{g'} v_z = \left(u_{ww} \circ g\right) (g')^2.$$ So, by Theorem \ref{th:estc2} and equation \eqref{pome}, we deduce that $\mathfrak{q}$ is bounded in the conformal horizontal band $\cU$. Then, by \eqref{imcu}, Schwarz's reflection principle and Liouville's theorem, $\mathfrak{q}$ is constant.
\end{proof}

\begin{corollary}
For any $k\in \N$, we have $||v||_{C^{k}(\cU)}<\8.$
\end{corollary}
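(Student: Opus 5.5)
The statement is a direct consequence of the composition $v=u\circ g$, together with the uniform bounds already established: Theorem \ref{th:estc2} (and the bootstrapped bounds $\|u\|_{C^{k,\alfa}(\overline\Omega)}<\8$ for all $k$ noted right after it), and the bounds \eqref{pome} on all derivatives of $g$. The plan is an induction on $k$ via the chain rule (Faà di Bruno), after first checking the zeroth order bound.

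The order zero bound is immediate: $\|v\|_{L^\8(\cU)}=\|u\|_{L^\8(\Omega)}$, which is finite by Lemma \ref{prop:height}.

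For $k\geq 1$, write $g=g_1+ig_2$, so that $v(x,y)=u(g_1(x,y),g_2(x,y))$. Any partial derivative $\parc^\beta v$ with $|\beta|=k$ is a finite sum of terms of the form
$$(\parc^\gamma u)\circ g\cdot \prod_{i=1}^{|\gamma|} \parc^{\beta_i} g_{j_i}.$$
Here $1\leq |\gamma|\leq k$, each $|\beta_i|\geq 1$, and $\sum|\beta_i|=k$. Since $g$ is holomorphic, every real partial derivative of order $m\geq 1$ of $g_1$ or $g_2$ equals, up to a factor $\pm 1$ or $\pm i$, the real or imaginary part of $g^{(m)}$. Hence each such factor is bounded on $\cU$ by \eqref{pome}. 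Each factor $(\parc^\gamma u)\circ g$ is bounded by $\|u\|_{C^{k}(\overline\Omega)}$, which is finite by Theorem \ref{th:estc2} and its bootstrapped version. So every term is bounded, and therefore $\|v\|_{C^k(\cU)}<\8$.

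The main point needing care is not the computation but the validity of the inputs for all $k$. We need, first, the higher-order estimates $\|u\|_{C^{k,\alfa}(\overline\Omega)}<\8$, which come from uniform $C^{2,\alfa}$ boundary charts and Schauder bootstrapping. Second, we need \eqref{pome} for every $j$, which relies on the uniformly bounded $C^{k,\alfa}$ geometry of $\parc\Omega$ in Remark \ref{rem:estibon} and the adapted Pommerenke boundary regularity. Both are stated earlier, so I would simply cite them.

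An alternative route avoids Faà di Bruno and uses the structure of the problem instead. One writes $v_z=(u_w\circ g)\,g'$ and $v_{zz}=\mathfrak{q}+2\omega_z v_z$ with $\mathfrak{q}$ constant by Proposition \ref{prop:qconst}, together with $\Delta v=-2e^{2\omega}=-2|g'|^2$. Each further derivative then involves $v_z$, $\omega_z=g''/(2g')$ and its derivatives, and $\bar z$-derivatives are controlled by the Laplacian identity. This route, however, requires the lower bound $|g'|\geq c_0>0$ on $\cU$, which would need its own justification. That lower bound should follow from the uniform geometry of $\Omega$ via the Kellogg–Warschawski type argument applied to $g^{-1}$. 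I would therefore present the chain-rule argument, which needs no lower bound on $|g'|$.
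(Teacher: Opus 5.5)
Your proof is correct and is the paper's argument: the $k=0$ case comes from Lemma \ref{prop:height}, and $k\geq 1$ from the chain rule combined with \eqref{pome} and the (bootstrapped) $C^k$ bounds on $u$ from Theorem \ref{th:estc2}. The paper just calls the $k\geq 1$ case immediate, and your Fa\`a di Bruno bookkeeping spells out that step.
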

\begin{proof}
The case $k=0$ follows directly from Lemma \ref{prop:height}. The case $k\geq 1$ is an immediate consequence of \eqref{pome} and the fact that $||u||_{C^k(\Omega)}<\8$ by Theorem \ref{th:estc2}.
\end{proof}

\begin{corollary}\label{omebounded}
For any $k\in \N$, we have $||\omega||_{C^{k}(\cU)}<\8.$
\end{corollary}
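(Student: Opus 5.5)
Since $\omega=\log|g'|$, the derivative bounds $\sup_\cU|g^{(j)}|<\8$ from \eqref{pome} give directly that all derivatives of $\omega$ of order $\geq 1$ are bounded. Indeed, $2\omega_z=g''/g'$, and higher $z$-derivatives of $\omega$ are polynomials in $g'',g''',\dots$ divided by powers of $g'$. Because $\omega$ is harmonic and real, every partial derivative of order $k\geq 1$ is a real combination of $\parc_z^k\omega$ and its conjugate. So the whole statement reduces to two facts: $|g'|$ is bounded above, which is \eqref{pome} with $j=1$, and $|g'|$ is bounded away from zero on $\overline\cU$. The lower bound also controls $\omega$ itself from below, and hence gives the case $k=0$.

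The main step is therefore a uniform lower bound $|g'|\geq \epsilon>0$ on $\overline\cU$. My first attempt uses the Hopf differential. By Proposition \ref{prop:qconst}, $\mathfrak{q}=(u_{ww}\circ g)(g')^2$ is a real constant. If $\mathfrak{q}\neq 0$, then $|g'|^2=|\mathfrak{q}|/|u_{ww}\circ g|$, and Theorem \ref{th:estc2} bounds $|u_{ww}|$ uniformly. This gives $|g'|^2\geq |\mathfrak{q}|/\sup_\Omega|u_{ww}|>0$, as required. The case $\mathfrak{q}=0$ would need a separate argument. There $u_{ww}\equiv 0$, which forces $u$ to depend only on $x_2$ up to rotation, so $\Omega$ is a flat band. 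In that case $g$ is affine, $g'$ is a nonzero constant, and the claim is immediate.

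As a backup not relying on $\mathfrak{q}\neq 0$, I would argue directly by a normal-families argument. Suppose $z_n\in\cU$ with $g'(z_n)\to 0$. Translate horizontally to get maps $g_n(z)=g(z+{\rm Re}\,z_n)-g({\rm Re}\,z_n)$. Their derivatives are uniformly bounded by \eqref{pome}, so a subsequence converges on $\overline\cU$ to some $g_\8$. The translated domains converge as well, by Corollary \ref{compaa}, using the lower bound $\inf f>0$ from Remark \ref{rem:estibon}, with the same $c$, to a Serrin band $\Omega_\8$. The limit $g_\8$ then maps $\cU$ onto $\Omega_\8$. To see this, note that $g_\8$ is not constant, because the width of $\Omega$ is at least $2\inf f>0$ and so the images of the vertical segments keep a definite length. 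By Hurwitz's theorem $g_\8$ is then univalent. Being a univalent holomorphic map, it has $g_\8'\neq 0$ in $\cU$, and also on $\parc\cU$ by the smooth boundary estimates and Schwarz reflection across the real-analytic boundary. Since $g_n'(z_n-{\rm Re}\,z_n)\to 0$, we get $g_\8'=0$ at some point of $\overline\cU$, which is a contradiction. The delicate point is identifying the limit of the conformal parametrization with a parametrization of $\Omega_\8$ on the same strip width $y^*$. I would handle this using the uniform bounds \eqref{pome} and the convergence of the boundary curves.

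Once $\epsilon\leq|g'|\leq C$ holds, $\omega$ is bounded, and the $C^k$ bounds follow from the formula for derivatives of $\omega$ in terms of $g^{(j)}/g'$.
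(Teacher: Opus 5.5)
Your reduction to a two-sided bound $\epsilon\le|g'|\le C$ on $\overline{\cU}$ matches the paper, and your Hopf-differential lower bound is a valid argument. However, your treatment of the case $\mathfrak q=0$ is wrong.

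If $u_{ww}\equiv 0$, then $u_w$ is antiholomorphic. Together with $u_{w\bar w}=\frac14\Delta u=-\frac12$, this forces $u=C-\frac12|w-w_0|^2$, so the solution is radial, not a function of $x_2$ alone. In fact a flat band has $u=a^2-x_2^2$, hence $u_{ww}=\frac12$ and $\mathfrak q=\frac12(g')^2\neq0$. Flat bands are therefore not the degenerate case.

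The correct statement is that $\mathfrak q=0$ never occurs. Since $g'\neq0$ in the open strip, $\mathfrak q=0$ would give $u_{ww}\equiv0$ on the connected set $\Omega$. Then $\partial\Omega\subset\{u=0\}$ would lie in a circle, contradicting that $\partial\Omega$ consists of two unbounded curves. With this repair, Theorem \ref{th:estc2} gives $|g'|^2\ge|\mathfrak q|/\sup_{\overline{\Omega}}|u_{ww}|>0$ on $\overline{\cU}$, and the rest of your proof goes through.

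Your route differs from the paper's. The paper bounds $|G'|$ for $G=g^{-1}$. The function $\mathrm{Im}\,G$ is bounded and harmonic on $\overline{\Omega}$ and constant on each boundary component. Schauder estimates, using the uniform boundary geometry from Theorem \ref{th:estc2}, then bound $|\nabla\,\mathrm{Im}\,G|=|G'|$.

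That argument uses only the geometry of $\Omega$, not the equation for $u$. Yours exploits the overdetermined structure, namely the constancy of $\mathfrak q$ from Proposition \ref{prop:qconst}, together with the $C^2$ bound on $u$. It is shorter and gives an explicit constant, at the cost of needing $\mathfrak q\neq0$; the paper asserts $\mathfrak q>0$ right after this corollary.

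Your backup normal-families argument is workable but superfluous once $\mathfrak q\neq0$ is known. The identification you flag as delicate can be obtained from the Carath\'eodory kernel theorem, as in the proof of Theorem \ref{compaiso}.
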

\begin{proof}
By \eqref{pome}, it suffices to show that $|g'(z)|\geq c_0>0$ for some $c_0>0$. Consider $G:=g^{-1}:\Omega\flecha \cU$. Then, ${\rm Im}(G)$ is a bounded harmonic function in $\overline{\Omega}$ that is constant along each boundary component of $\Omega$. By Theorem \ref{th:estc2}, $\Omega$ has uniformly bounded geometry. So, by Schauder estimates, $|\nabla \,{\rm Im}(G)| = |G'|$ is bounded on $\Omega$. Hence, $|g'(z)|\geq c_0>0$, as desired.
\end{proof}
\begin{corollary}\label{kobe}
For any $k\in \N$ there exists $C=C(k,y^*)>0$ such that, if we denote $\eta:=\omega_z$, then
\begin{equation}\label{kbo}
|\eta_x^{(k)} (x,0)| \leq C, \hspace{0.5cm} \text{for all $x\in \R$}.
\end{equation}
\end{corollary}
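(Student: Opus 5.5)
Since $\omega$ is harmonic on $\cU$, we have $\eta=\omega_z=\frac12(\omega_x-i\omega_y)$, which is holomorphic on $\overline\cU$. Every $x$-derivative of $\eta$ is a linear combination of partial derivatives of $\omega$ of order $k+1$: explicitly,
\[
\eta_x^{(k)}=\tfrac12\bigl(\parc_x^{k+1}\omega-i\,\parc_x^{k}\parc_y\omega\bigr).
\]
This gives
\[
|\eta_x^{(k)}(x,0)|\leq \|\omega\|_{C^{k+1}(\cU)}.
\]
The estimate \eqref{kbo} then follows directly from Corollary \ref{omebounded} applied with $k+1$ in place of $k$.

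The main subtlety is the claimed dependence $C=C(k,y^*)$, as opposed to a constant depending on the specific band $\Omega$. If a constant depending only on $k$ and $y^*$ is really intended, I would proceed in two steps.

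\textbf{Step 1: a uniform $C^0$ bound on $\omega$.} I would bound $\omega$ on $\overline\cU$ in terms of $y^*$ and $c$. The upper bound on $|g'|$ should come from the uniform geometry of Theorem \ref{th:estc2} and Remark \ref{rem:rossic}, since $\sup f\leq 4c$. The lower bound should come from the gradient bound on ${\rm Im}(G)$ in the proof of Corollary \ref{omebounded}.

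\textbf{Step 2: interior estimates on the midline.} Along the line $y=0$, which lies at distance $y^*$ from $\parc\cU$, I would apply interior Cauchy estimates to the holomorphic function $\eta$ on the disk $D_{y^*}(x)\subset\cU$. This gives
\[
|\eta_x^{(k)}(x,0)|\leq \frac{k!}{(y^*)^{k}}\,\sup_{D_{y^*}(x)}|\eta|.
\]
Bounding $\sup|\eta|$ by $\|\omega\|_{C^1}$, and that in turn via the $C^0$ bound and harmonicity of $\omega$ on a slightly larger domain (after Schwarz-type extension or boundary estimates from \eqref{pome}), yields a constant depending only on $k$, $y^*$ and the uniform geometry.

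The genuine obstacle is making the bound on $\sup|\eta|$ uniform. For the statement as literally needed, however, the first paragraph already suffices.
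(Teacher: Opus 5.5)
Your first paragraph is correct, but it proves a weaker statement. Corollary \ref{omebounded} bounds $\|\omega\|_{C^{k+1}(\cU)}$ with a constant that depends on the particular band (through Theorem \ref{th:estc2}, \eqref{pome} and the Schauder bound for ${\rm Im}(G)$). The corollary instead asserts $C=C(k,y^*)$. Your claim that the band-dependent version suffices ``for the statement as literally needed'' is mistaken. In Theorem \ref{compaiso} the corollary is applied to all bands of the isospectral family $\I$ at once, all with the same $y^*$. This gives uniform bounds on $\tau_{n-1},\sigma_n$, hence \eqref{dob} and the uniform necksize bound \eqref{dob3}, which is what makes Corollary \ref{compaa} applicable.

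Your two-step plan does not close this gap, as you suspect. Step 1 needs a uniform positive lower bound for $|g'|$, and that is essentially a uniform necksize bound, which is not available a priori. The estimates you cite are band-dependent. The uniform geometry of Corollary \ref{compaa} presupposes a necksize bound $\delta>0$, and in the application that bound is deduced from this very corollary. So Step 1 is either non-uniform or circular.

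The missing idea is that you should not try to control $\omega$ at all. The quantity $\eta=g''/(2g')$ is unchanged under $g\mapsto ag+b$, and for an injective holomorphic map it is controlled by univalence alone. Apply $|a_2|\le 2$ to the normalized univalent map $\zeta\mapsto \bigl(g(z+r\zeta)-g(z)\bigr)/\bigl(r\,g'(z)\bigr)$ on $\D$, with $r={\rm dist}(z,\parc\cU)$. This gives the Koebe estimate $|g''(z)/g'(z)|\le 4/{\rm dist}(z,\parc\cU)$. Hence $|\eta|\le 4/y^*$ on $\{|y|\le y^*/2\}$ for every Serrin band with conformal strip $\cU$, independently of the band.

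Now run your Step 2 on disks of radius $y^*/2$ centered at $x\in\R$, not radius $y^*$, since the Koebe bound degenerates at $\parc\cU$. This yields $|\eta^{(k)}(x)|\le k!\,(2/y^*)^{k}\cdot 4/y^*$, which is exactly the paper's argument.
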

\begin{proof}
Since $g(z)$ is holomorphic and injective in $\cU=\R\times(-y^*,y^*)$, one has the well-known estimate $$\left|\frac{g''(z)}{g'(z)}\right| \leq \frac{4}{{\rm dist}(z,\parc\cU)}.$$ In our case $\eta=g''/(2g')$, and so $|\eta|\leq 4/y^*$ on the smaller band $|y|\leq y^*/2$. So, by Cauchy estimates for $\eta(z)$ at any $z=x\in \R$, one has directly \eqref{kbo}.
\end{proof}

\subsection{Conformal representation of Serrin bands}\label{sec:confrep}

Let $\Omega\subset \R^2$ be a Serrin band. Let $g(z):\R\times [-y^*,y^*]\flecha \overline\Omega$ be a conformal parametrization for $\Omega$, so that $g(x,0)$ parametrizes the $x_1$-axis, with $g'(x)>0$ for all $x\in \R$. By Proposition \ref{prop:qconst}, the Hopf differential $\mathfrak{q}$ associated to $g(z)$ is constant. As a matter of fact it can be easily proved that $\mathfrak{q}>0$. After rescaling the complex parameter and the band $\cU$ we may chose $\mathfrak{q}=1/2$. Then, by Proposition 2.7 in \cite{CFM}, we have along $y=y^*$ that 
\begin{equation}\label{bofla}
\omega_y (x,y^*)= A \sinh \omega(x,y^*), \hspace{0.5cm} A:=-\frac{2}{c}.
\end{equation}
By symmetry, $\omega(x,-y)=\omega(x,y)$, and so the same equation \eqref{bofla} holds along $y=-y^*$ for $-A$. Also, $\omega_y(x,0)=0$. The associated solution $v(x,y)$ in \eqref{defuve} satisfies \eqref{pdeom} and
\begin{equation}\label{simv}
v(x,-y)=v(x,y).
\end{equation}
From \eqref{pdeom}, $v$ can be written in the form 
\begin{equation}\label{vh}
v(x,y):=h(x,y)-\frac{|g(z)|^2}{2}, \hspace{1cm} z=x+iy,
\end{equation} 
for some harmonic function $h(x,y)$ in $\R\times [-y^*,y^*]$. From \eqref{def:q2} and $\mathfrak{q}=1/2$, we have $v_{zz}-2\omega_z v_z=1/2$, and so 
\begin{equation}\label{edoach}
h_{zz}-2\omega_z h_z = \frac{1}{2}.
\end{equation} 
The general solution to \eqref{edoach} is, using that $2\omega_z=g''/g'$,
\begin{equation}\label{hh2}
h= {\rm Re}\left(\int^z g'(w) \varphi(w) dw\right), \hspace{0.5cm} \varphi'(z)=\frac{1}{g'(z)}.
\end{equation}
leaving the integration constants in \eqref{hh2} undefined for the moment.

The harmonic function $h(x,y)$ satisfies $h(x,-y)=h(x,y)$. Imposing this condition, we can write \eqref{hh2} as
\begin{equation}\label{hh}
h(x,y)=c_1 + c_2\, {\rm Re} g +{\rm Re}\left(\int_0^z g'(w) \phi(w) dw\right), \hspace{0.5cm} \phi(z)=\int_0^z\frac{1}{g'(w)}dw.
\end{equation} 
So, $h(x,y)$ is determined from $g$ by two real integration constants $c_1,c_2\in \R$. These constants represent the addition of linear terms to the torsion equation $\Delta u +2=0$. 

We now look at the Serrin boundary conditions $u=0$ and $\parc_\nu u=-c$ along the upper boundary curve of $\Omega$. For $v(x,y)$ as in \eqref{defuve}, these conditions are rewritten as 
\begin{equation}\label{sericon}
v(x,y^*)=0, \hspace{0.5cm} v_y(x,y^*)= -\frac{2}{A} e^{\omega}, \hspace{0.5cm} A:=-\frac{2}{c}.
\end{equation} 
Let us write 
\begin{equation}\label{dete}
g'=e^{\omega}e^{i\vart}.
\end{equation} Then, $1/g'=e^{-\omega}e^{-i\vart}$. Therefore, from \eqref{vh}, \eqref{hh},  
\begin{equation}\label{seribo}
2v_z=g'(\varphi-\bar{g}).
\end{equation} 
for some function $\varphi$ with $\varphi'=1/g'$, and $\varphi(x)\in \R$ if $x\in \R$. Then, it follows from \eqref{sericon} and \eqref{seribo} that
\begin{equation}\label{ggam2}
g-\overline\varphi -\frac{2i}{A} e^{i\vart} =0 \hspace{0.5cm} \text{along $x\mapsto (x,y^*)$.} 
\end{equation}
In particular,
\begin{equation}\label{ggam4}
c_2:=g-\overline\phi -\frac{2i}{A} e^{i\vart} \in \R \hspace{0.5cm} \text{at $(0,y^*)$} .
\end{equation}

We discuss next the converse to this statement, in a slightly more general setting in which we allow ``bands'' $\Omega\subset \R^2$ with self-intersections, and we do not impose $v>0$.

To start, denote $\cU:=\R\times (-y^*,y^*)\subset \C$ for some $y^*>0$, and let $g(z):\cU\flecha \C$ be a holomorphic mapping with $g'(z)\neq 0$ for all $z\in \cU$, and $g'(x)>0$ for all $x\in \R$. We also assume $g(x)\in \R$ if $x\in \R$. Then, we can define $\omega(x,y)\in C^{\8}(\cU)$ as the associated harmonic function given by $|g'|^2 =e^{2\omega}$.  By Schwarz reflection, $\omega(x,-y)=\omega(x,y).$ 

We assume that \eqref{bofla} holds along the boundary component $\{y=y^*\}$ of $\cU$, and that \eqref{ggam4} holds at $(0,y^*)$. Note that $g$ need not be injective, and so $g(\cU)\subset \R^2$ is not, in general, a smooth domain.

We define next $v(x,y)$ by \eqref{vh}, \eqref{hh}. So in particular, $v:\cU\flecha \R$ satisfies \eqref{pdeom}, 
and $\mathfrak{q}=v_{zz}-2\omega_z v_z=1/2$. And finally, since \eqref{ggam4} holds, the function $v(x,y)$ defined through the previous process satisfies Serrin boundary conditions \eqref{sericon}, after a suitable choice of the integration constants $c_1,c_2\in \R$. Specifically, writing $g'$ as in \eqref{dete}, Cauchy-Riemann gives $\vart_x=-\omega_y=-A\sinh\omega$ along $x\mapsto (x,y^*)$. From there $$\left(g-\overline\phi -\frac{2i}{A} e^{i\vart}\right)_x =0 \hspace{0.5cm} \text{along $x\mapsto (x,y^*)$.}$$ Using that \eqref{ggam4} holds for some real $c_2$, and setting $\varphi:=\phi+c_2$, equation \eqref{ggam2} holds and the rest of the process can be easily reversed.

{\bf To sum up:} starting from the holomorphic map $g(z):\overline\cU\flecha \C$ with $g'(z)\neq 0$, satisfying \eqref{bofla} and \eqref{ggam4}, we obtain a smooth solution $v$ to $\Delta v +2e^{2\omega}=0$ in $\overline\cU$, with Serrin overdetermined boundary conditions written in conformal parameters as \eqref{sericon}. The function $v(x,y)$ is given by \eqref{vh}, \eqref{hh} in terms of some specific constants $c_1,c_2\in \R$. So we can regard $\Omega:=g(\cU)$ as an immersed Serrin domain. 

\section{Serrin bands are algebro-geometric}\label{sec:kdv}

\subsection{Some preliminaries on mKdV theory}
Let us recall the mKdV operators, following the exposition in \cite{CFM,Jos}. They are the sequence of polynomial operators $P_n[\eta]$ acting on holomorphic functions $\eta=\eta(z)$ and its derivatives, and determined recursively by $P_{-1}[\eta]=0$ , $P_0[\eta]=\eta$ and
\begin{equation}\label{mkdv3}
P_{n+1}' -\frac{\eta'}{\eta}P_{n+1} = P_n''' -\frac{\eta'}{\eta} P_n'' -4\eta^2 P_n'.
\end{equation}
This differential equation has a solution up to an additive term $c_n \eta(z)$. In the case that we choose $c_j=0$ for all $j$, we obtain the   ``homogeneous'' mKdV operators $Q_n[\eta]$. The $Q_n[\eta]$ are homogeneous differential polynomials, see e.g. \cite{CFM}. One has 
\begin{equation}\label{relpq}
P_j =Q_j+ b_1 Q_{j-1}+\dots + b_j Q_0,
\end{equation}
with $b_1,\dots, b_j$ constants.

From \eqref{mkdv3} and $P_0=\eta$, we can easily compute the first few operators $Q_j$ as
$$Q_0 =\eta, \hspace{0.5cm} Q_1= \eta''-2\eta^3,\hspace{0.5cm} Q_2 = \eta^{(4)}-10 \eta'' \eta^2 -10 \eta (\eta')^2 +6 \eta^5.$$ We say that $\eta$ is an \emph{algebro-geometric potential} if for some $n\in \N$ it holds $$Q_{n}[\eta]= \beta_0 + \sum_{j=0}^{n-1} \alfa_j Q_j[\eta], \hspace{0.5cm} \beta_0, \alfa_0,\dots, \alfa_{n} \in \C.$$ This implies that all $Q_j[\eta]$ with $j>n$ are also linear combinations of the previous ones.

It will be important for our purposes to arrange these mKdV recursive polynomials intro a Lax pair structure. This is standard. We follow here the development in Pinkall-Sterling \cite{PS}. A way to pass from $P_j$ to $P_{j+1}$ is by the system 
$$\left\{\def\arraystretch{1.5}\begin{array}{lll}P_{j+1}& =&  P_j''-2 \eta P_j' -2 \eta \tau_j,\\ \tau_j' &=& 2 \eta P_j' -P_j''\end{array}\right.$$ Note that there is an integration constant to be chosen for $\tau_j$ at each step $j$. We choose these constants to be real. 
We also consider functions $\sigma_j$ given by 
\begin{equation}\label{defisi}
\sigma_{j+1}:=\tau_j+2P_j'.
\end{equation}
We now define the formal series $$P=\sum_{n=-1}^{\8}  P_n \landa^n, \hspace{0.5cm}  \tau=\sum_{n=-1}^{\8} \tau_n \landa^n, \hspace{0.5cm} \sigma=\sum_{n=-1}^{\8} \sigma_n \landa^n,$$ where $P_{-1}=0$, $\sigma_{-1}=0$, and so $\tau_{-1}= -1/2$ and $P_0=\eta$. Then, 
\begin{equation}\label{dere}
P''-2\eta P' -2\eta \tau=\landa^{-1} P, \hspace{0.5cm} \tau'= 2\eta P'-P'', \hspace{0.5cm} \sigma=\landa \tau+2\landa P'.
\end{equation}
These equations are equivalent to \eqref{mkdv3}. Denote 
\begin{equation}\label{tildepot}
\xi_\landa:= \left(\def\arraystretch{1} \begin{array}{cc} P & \tau \\ \sigma & -P \end{array}\right), \hspace{0.5cm} \cW_\landa:= \left( \def\arraystretch{1} \begin{array}{cc} \eta & -\landa^{-1}/2 \\ -1/2 & -\eta \end{array}\right).
\end{equation}
Then, \eqref{dere} can be written compactly as
\begin{equation}\label{edotti}
 \xi'_{\landa} = [\xi_\landa,\cW_\landa].
 \end{equation}

\subsection{Algebro-geometric nature of Serrin bands}
By Proposition \ref{prop:qconst}, any Serrin band $\Omega\subset \R^2$ admits a conformal parametrization $g(z)$ from a horizontal band $\cU\equiv \R\times [-y^*,y^*]$, with respect to which $\mathfrak{q}=1/2$. Working in these parameters, we have:
\begin{theorem}\label{serrinfinite}
Any Serrin band is of finite type, i.e., in the above parameters $\eta:=\omega_z$ is an algebro-geometric mKdV potential.
\end{theorem}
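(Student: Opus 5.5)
The plan follows the outline in the introduction. For each $n\geq 0$ we associate to the mKdV operators a real function on $\overline\cU$,
$$J_n:={\rm Re}\left(\lambda_n\, Q_n[\eta]\right)$$
for a suitable constant $\lambda_n$ (possibly after a $z$-integration, i.e.\ using $\tau_{n-1}$ or a primitive). These are the Jacobi fields of the boundary value problem. More precisely, $J_n$ will be the first-order variation of $\omega$ under the $n$-th mKdV flow, and we aim to show three things. First, $J_n$ is harmonic, being the real part of a holomorphic function. Second, it is even in $y$, which follows from the symmetry $\omega(x,-y)=\omega(x,y)$, i.e.\ $\eta(\bar z)$ and $\overline{\eta(z)}$ are related by reflection; this fixes the choice of $\lambda_n\in\{1,i\}$. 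Third, it satisfies the linearized Robin condition obtained by differentiating \eqref{bofla}:
$$\partial_y J = A\cosh\omega\, J \quad\text{along } y=y^*.$$

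Following \cite{CFM}, the boundary condition is verified by induction on $n$ using the recursion \eqref{mkdv3}, or the Lax form \eqref{edotti}, together with the following consequences of \eqref{bofla} along $y=y^*$: $\eta$ is determined there by $\omega$ and $\omega_x$ via $2\eta=\omega_x-iA\sinh\omega$, and tangential derivatives of the boundary identity can be converted into normal ones. This is exactly the computation in \cite{CFM}, Section 4, for periodic bands. It is purely local along $\parc\cU$, so it carries over verbatim. Besides the Robin condition, the linearization of the closing condition \eqref{ggam4} might impose a constraint, but that only cuts out a finite-codimension subspace and will not be needed for finiteness.

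Next comes boundedness. By Corollary \ref{omebounded}, all derivatives of $\omega$, and hence of $\eta$, are bounded on $\overline\cU$. Since $Q_n[\eta]$ is a differential polynomial in $\eta$, each $J_n$ is bounded together with all its derivatives. This is where the non-periodic case departs from \cite{CFM}, and the a priori estimates of Section \ref{sec:apriori} replace periodicity.

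The main step, and the expected obstacle, is to show that the space $\mathcal{J}$ of bounded harmonic functions $J$ on $\overline\cU$ that are even in $y$ and satisfy the Robin condition is finite-dimensional. The coefficient $A\cosh\omega$ is negative, since $A<0$, so the Robin condition has the ``wrong'' (unstable) sign and a maximum principle alone does not kill $\mathcal{J}$. I would try two routes.

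The first route is spectral. Write $J$ via its boundary trace and view the problem as a Steklov-type eigenvalue problem for the Dirichlet-to-Neumann operator $N$ of the strip, which is comparable to $|\xi|\tanh(y^*|\xi|)$. The condition is then $NJ=A\cosh\omega\, J$ with a bounded coefficient, and bounded solutions would be controlled by a Liouville/finite-index argument: the index form $\int|\nabla J|^2+\int_{\parc}A\cosh\omega\, J^2$ has finite index on bounded domains, uniformly in length.

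The second route, which I would try first, is compactness. Suppose $\dim\mathcal{J}=\infty$. Choose $J^{(k)}\in\mathcal{J}$ with $\sup|J^{(k)}|=1$ that are ``orthogonal'' on a large window $[-R,R]$. Elliptic estimates give compactness on compact sets. If the suprema escape to infinity, translate the $x$-variable; by Corollary \ref{compaa} and uniform estimates, $\omega(\cdot+x_k)$ converges to the $\omega$ of a limit Serrin band, and we obtain bounded Jacobi fields on the limit. One then needs uniform control of the dimension of this space over the compact family of limits, which is exactly the delicate point.

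A cleaner variant is to note that the $J_n$ are not arbitrary elements of $\mathcal{J}$. If $\mathcal{J}$ is finite-dimensional, say of dimension $d$, then $J_0,\dots,J_d$ are linearly dependent over $\R$, and by harmonic conjugation and holomorphy we get $\sum \mu_j Q_j[\eta]=\beta_0$ with $\mu_d\neq 0$ after reordering. This is precisely the algebro-geometric relation, which finishes the proof.
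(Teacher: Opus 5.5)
You reduce the statement exactly as the paper does. The fields $h_j={\rm Re}\,Q_j[\omega_z]$ are harmonic, satisfy $(h_j)_y=0$ on $y=0$ and $(h_j)_y=A\cosh\omega\,h_j$ on $y=y^*$ by \cite{CFM}, and are bounded by Corollary \ref{omebounded}; a linear relation among them is the algebro-geometric relation. The gap is the one step that carries the new content for non-periodic bands: that bounded solutions of this Robin problem on the strip form a finite-dimensional space. You outline two routes and stop exactly where each would have to do the work. Your ``cleaner variant'' then simply assumes $\dim\mathcal J<\infty$, which is what had to be shown.

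Neither route works as stated.

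\emph{Spectral route.} Non-flat bands carry the nonzero bounded field $h_0=\omega_x/2$, so the coefficient $a=A\cosh\omega$ must have the unstable sign, $a\geq A>0$. A negative coefficient, as you assert, is in fact the \emph{stable} sign: a translation-limit plus Hopf-lemma argument would then kill every bounded solution, including $h_0$. The identity \eqref{sericon2} confirms $A>0$. With $a\geq A>0$, the relevant form $\int|\nabla\phi|^2-\int_{y=y^*}a\,\phi^2$ is already negative on a $y$-independent bump $\chi(x/R_0)$ once $R_0$ is large. Disjoint translates of that bump give index at least $cR$ on windows of length $2R$, so there is no uniform finite index. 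Moreover, bounded solutions are not in $L^2$, so an index bound would not count them anyway.

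\emph{Compactness route.} This reduces to uniform dimension control over the family of limit bands, which is the original question again.

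\emph{What is missing} is a quantitative finite-data uniqueness statement. The paper proves it in Lemma \ref{th:appendix} using only $\|F\|_\infty<\infty$, with no sign condition and no compactness of bands. The steps are:
\begin{enumerate}
\item Expand $v$ in the Neumann basis $\cos(k_n(L-y))$, $k_n=n\pi/L$, so that $a_n''-k_n^2a_n=-c_nF(x)v(x,L)$.
\item Represent the modes $n\geq N+1$ by the decaying Green kernel of $-\partial_x^2+k_n^2$; this is where boundedness of $v$ is used.
\item Represent the modes $n\leq N$ by Duhamel's formula from their Cauchy data at $x=0$.
\item Suppose these $2N+2$ numbers vanish, and take $\alpha=(N+\frac12)\pi/L$. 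Then $M_\alpha=\sup_x|v(x,L)|/\cosh(\alpha x)$ satisfies $M_\alpha\leq\|F\|_\infty S(\alpha)M_\alpha$ with $S(\alpha)=O(\log N/N)$.
\end{enumerate}
Hence $v\equiv0$ for $N$ large, and $\dim\mathcal V\leq 2N+2$.
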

\begin{proof}
From \cite{CFM}, all the hierarchic conformal Jacobi fields $h_j:={\rm Re}\, Q_j[\omega_z]$ of the mKdV hierarchy solve the Robin problem $\Delta h_j=0$ in $\R\times [0,y^*]$, with 
\begin{equation}\label{confojaco}
(h_j)_y=0 \hspace{0.5cm} \text{along $y=0$}, \hspace{1cm} (h_j)_y = A \cosh \omega\, h_j \hspace{0.5cm} \text{ along $y=y^*$.}
\end{equation} 
Moreover, by Corollary \ref{omebounded} we obtain directly that all $h_j$ are bounded. Now, from Lemma \ref{th:appendix} in the Appendix we deduce the existence of some $h_n$ that is a linear combination of the previous Jacobi fields $h_0,\dots, h_{n-1}$. This implies $\omega_z$ is an algebro-geometric mKdV potential, see \cite{CFM}.
\end{proof}

It is convenient for later use to reformulate algebraically the capillary boundary condition $\omega_y=A \sinh\omega$ along $y=y^*$ in \eqref{bofla} in terms of the hierarchic data $P,\sigma,\tau$. These type of capillary boundary conditions are known in the theory of integrable systems as \emph{integrable boundary conditions}; see e.g. \cite{AGGH,Kha,Sk} and specially \cite{KiS,Mc} for our situation.

\begin{proposition}\label{pro:releski}
For any $j\in \N$ it holds 
\begin{equation}\label{releski}
e^{\omega}\tau_{j-1} - e^{-\omega}\overline{\sigma_j} = \frac{2}{A} {\rm Im} P_j \hspace{0.5cm} \text{along $y=y^*$}.
\end{equation}
\end{proposition}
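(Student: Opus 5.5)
We argue by induction on $j$, working along the boundary line $y=y^*$. We use the Lax structure \eqref{edotti} together with the boundary relation \eqref{bofla}, written through the complex derivative. Along $y=y^*$ we have $\eta=\omega_z=\tfrac12(\omega_x-i\omega_y)=\tfrac12(\omega_x-iA\sinh\omega)$. Here $'$ denotes $d/dz$, which for holomorphic functions equals $\partial_x$. Thus every identity along $y=y^*$ can be differentiated in $x$, with $\omega_x=2{\rm Re}\,\eta$ and $\omega_y=-2{\rm Im}\,\eta=A\sinh\omega$.

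Set
$$E_j:=e^{\omega}\tau_{j-1}-e^{-\omega}\overline{\sigma_j}-\tfrac{2}{A}{\rm Im}\,P_j,$$
a function of $x$ along $y=y^*$. The base case $j=0$ is immediate: $\tau_{-1}=-1/2$, $\sigma_0=\tau_{-1}+2P_{-1}'=-1/2$ and $P_0=\eta$. Hence
$$E_0=-\tfrac12 e^{\omega}+\tfrac12e^{-\omega}-\tfrac2A{\rm Im}\,\eta=-\sinh\omega+\tfrac1A\omega_y=0$$
by \eqref{bofla}. This already shows the mechanism: the identity for general $j$ should be the $\lambda^{j}$-coefficient of a single generating identity
$$e^{\omega}\lambda\tau(\lambda)-e^{-\omega}\overline{\sigma(\bar\lambda)}=\tfrac{2}{A}\,{\rm Im}\,P(\lambda)\quad\text{along } y=y^*,$$
taken formally with real $\lambda$.

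For the inductive step we compute $\partial_x E_j$ using \eqref{dere}. The relevant derivatives are $\tau_{j-1}'=2\eta P_{j-1}'-P_{j-1}''$, $\sigma_j'=\tau_{j-1}'+2P_{j-1}''=2\eta P_{j-1}'+P_{j-1}''$ and $P_j=P_{j-1}''-2\eta P_{j-1}'-2\eta\tau_{j-1}$, together with $\partial_x e^{\pm\omega}=\pm 2{\rm Re}(\eta)\,e^{\pm\omega}$. We then substitute $\omega_y=A\sinh\omega$ and the inductive hypothesis $E_{j-1}=0$ to eliminate ${\rm Im}P_{j-1}$. The goal is to show $\partial_xE_j$ is a combination of $E_{j-1}$, $\partial_x E_{j-1}$ and ${\rm Im}$ of already-vanishing quantities, so that $\partial_xE_j\equiv0$. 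This is where the integrable boundary condition enters. It is cleanest to phrase it as a matrix statement: along $y=y^*$ the matrix $\cW_\lambda$ satisfies a reflection relation $\overline{\cW_\lambda}=K\cW_\lambda K^{-1}+K_x K^{-1}$, with $K={\rm diag}(e^{\omega},e^{-\omega})$ twisted by an off-diagonal term proportional to $1/A$. This relation is equivalent to \eqref{bofla}. From \eqref{edotti} it follows that $\overline{\xi_\lambda}-K\xi_\lambda K^{-1}$ satisfies a linear ODE of Lax type in $x$, and \eqref{releski} is one matrix entry of the statement that this quantity vanishes. I would try this matrix formulation first and fall back on the coefficientwise computation if the twist matrix $K$ is awkward.

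Once $\partial_xE_j=0$, $E_j$ is a real or complex constant along $y=y^*$, and the remaining step is to fix it. It is determined by the free real integration constants for $\tau_{j-1}$, which, through \eqref{defisi}, also shift $\sigma_j$ and $P_j$ via the $b_i$ in \eqref{relpq}. Adding $t$ to $\tau_{j-1}$ shifts $\sigma_j$ by $t$ and $P_j$ by $-2t\eta$. These shifts change $E_j$ by $t(e^\omega-e^{-\omega})+\tfrac{4t}{A}{\rm Im}\,\eta=t(2\sinh\omega-\tfrac2A\omega_y)=0$, so they preserve the identity. The constant must therefore be killed differently. Its imaginary part should vanish because the constants are real and $\omega$ is real; its real part should vanish by the evenness $\omega(x,-y)=\omega(x,y)$, or alternatively by evaluating at a point where the hierarchy data are controlled, for instance using boundedness via Corollary \ref{omebounded}. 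I expect this normalisation of the constant, together with finding the correct twist in the derivative computation, to be the main obstacle.
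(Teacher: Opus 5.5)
There is a genuine gap, and the structure you aim for in the inductive step is not what actually happens. You only announce that step: you never compute $\partial_xE_j$ or write down the twisted $K$. Assume the Robin condition $({\rm Re}\,P_{j-1})_y=A\cosh\omega\,{\rm Re}\,P_{j-1}$ on $y=y^*$. The paper imports it from \cite{CFM} via \eqref{confojaco}; your induction could also supply it, since it follows from the same condition for $P_{j-2}$ plus the vanishing of the previous constant below. Under this assumption, the recursion only shows that
$$\kappa_j:={\rm Im}\,\tau_{j-1}-Ae^{-\omega}\,{\rm Re}\,P_{j-1}$$
is constant along $y=y^*$, which is \eqref{fol25}. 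Now substitute this into your $E_j$, using only $P_j=-\tau_{j-1}'-2\eta\tau_{j-1}$, $\sigma_j=\tau_{j-1}+2P_{j-1}'$ and $2\eta=\omega_x-iA\sinh\omega$. One gets
$$E_j=\kappa_j\Bigl(\tfrac{2}{A}\,\omega_x+2i\cosh\omega\Bigr)\qquad\text{along } y=y^*.$$
So $E_j$ is not a constant. It is a fixed nowhere-vanishing function times one undetermined real number. Hence $\partial_xE_j\equiv0$ is already equivalent to $\kappa_j=0$, and the inductive step as you phrase it contains the whole difficulty. In a matrix formulation, the defect would satisfy a linear homogeneous ODE in $x$. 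It would then vanish if and only if it vanishes at one point, and again you have no such point.

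The missing idea is how to prove $\kappa_j=0$, and none of your suggestions achieves it.
\begin{itemize}
\item $\kappa_j$ is real, so ``the constants are real'' gives nothing. The real and imaginary parts of $E_j$ are not separate issues, since both are multiples of the same $\kappa_j$.
\item $\kappa_j$ is invariant under real shifts of the constant in $\tau_{j-1}$, as your own (correct) computation shows.
\item The symmetry $y\mapsto-y$ relates the two boundary lines, but says nothing about the value on one of them.
\item Boundedness cannot force a constant to vanish.
\end{itemize}
The paper instead uses a universality argument. Harmonicity and $\omega_y=A\sinh\omega$ let you rewrite $\kappa_j$ as a differential expression in the boundary jet $\beta(x)=\omega(x,y^*)$. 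Every real-analytic $\beta$ is realized by a local harmonic $\omega$ with these Cauchy data. Hence $D_x\kappa_j=0$ is an identity in free jet variables, so $\kappa_j$ is a jet-independent constant. One can then evaluate it for constant $\beta$: there $\omega$ is linear in $y$, $\eta$ is a purely imaginary constant, and one checks directly that $\kappa_j=0$.
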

\begin{proof}
The case $j=0$ is immediate from $P_0=\omega_z$, $\tau_{-1}=\sigma_0=-1/2$ and $\omega_y=A \sinh\omega$. Now assume $j\geq 1$ and denote $P_{j-1}:=r+is$. Thus, $r_y=A r \cosh\omega$, and by Cauchy-Riemann,
\begin{equation}\label{fol1}
s_x =-A r \cosh \omega.
\end{equation}
From \eqref{dere} we have $\tau_{j-1}'= 2\eta P_{j-1}'-P_{j-1}''$. Using this and $2\eta=\omega_x-iA \sinh\omega$ we obtain
\begin{equation}\label{fol2}
{\rm Im}\,(\tau_{j-1})_x = \omega_x s_x - A r_x \sinh \omega - s_{xx}.
\end{equation}
We now put \eqref{fol1} and its derivative into \eqref{fol2} to obtain 
\begin{equation}\label{fol25}
\left({\rm Im}\, \tau_{j-1} - A e^{-\omega} r\right)_x =0.
\end{equation} 
Assuming the associated integration constant is zero (we will prove that below), we have
\begin{equation}\label{fol3}
{\rm Im} \, \tau_{j-1} = A e^{-\omega} r.
\end{equation}
Then, using \eqref{fol1}, \eqref{fol3} and the hierarchic relation \eqref{defisi} for $\sigma_j$, we arrive at ${\rm Im}\, \sigma_j = -A \,e^{\omega}\, {\rm Re}\, P_{j-1}.$ Hence, 
\begin{equation}\label{fol4}
{\rm Im} \left(e^{\omega} \tau_{j-1} - e^{-\omega}\overline{\sigma_j}\right)=0.
\end{equation} 
The real part of the inner expression can be computed to be 
\begin{equation}\label{fol5}
2 \sinh\omega \,{\rm Re}\, \tau_{j-1} -2 e^{-\omega} r_x.
\end{equation} 
And finally, taking the imaginary part in $P_j=-\tau_{j-1}'-2\eta \tau_{j-1}$ and using \eqref{fol3}, we reach $${\rm Im} P_j= -A e^{-\omega} r_x +A \sinh\omega\, {\rm Re}\,\tau_{j-1}.$$ Putting this together with \eqref{fol4}, \eqref{fol5}, we obtain the desired expression \eqref{releski}.

We finally prove that the integration constant in \eqref{fol25} is indeed zero following a standard trick. A more formal derivation can be made following Section 10 in \cite{CFM}. First, write the expression $$\Phi={\rm Im}\, \tau_{j-1} - A e^{-\omega} r$$ obtained by integration in \eqref{fol25} as a differential expression in $\beta(x):=\omega(x,y^*)$ and its $x$-derivatives; this can be done using the harmonicity of $\omega$ and the capillary boundary condition $\omega_y=A \sinh\omega$ along $y=y^*$. Now, any real analytic $\beta(x)$ determines a harmonic function $\omega(x,y)$ by solving $\omega=\beta$, and $\omega_y=A \sinh\beta$ along $y=y^*$. Thus, the equation \eqref{fol25}, that is, $D_x\Phi=0$, can be seen as an identity of boundary jets. Comparing the highest one and then descending, we obtain that $\Phi$ is independent of all these jets. We can now compute $\Phi$ by taking the initial data $\beta$ to be constant. This gives a linear harmonic function for which $\Phi=0$. So, the integration constant is zero, as claimed. This completes the remaining detail of the proof.
\end{proof}

\subsection{The spectral curve of a Serrin band} 

With the previous notations, we will let $\Omega\subset \R^2$ be a Serrin band, and $z$ be a complex conformal parameter with respect to which $\mathfrak{q}=1/2$ holds. From Theorem \ref{serrinfinite}, $\omega_z$ is an mKdV potential, that is,
\begin{equation}\label{finitetype0}
Q_{n}[\omega_z]= i c_0 + \sum_{j=0}^{n-1} a_j Q_j[\omega_z], \hspace{0.5cm} c_0, a_j \in \R.
\end{equation} 
Here, $Q_j$ are the homogeneous mKdV recursive operators. If we now use that $\omega(x,y)=\omega(x,-y)$ for any Serrin band, then evaluation of \eqref{finitetype0} at $y=0$ gives $c_0=0$, that is, 
\begin{equation}\label{finitetype}
Q_{n}[\omega_z]= \sum_{j=0}^{n-1} a_j Q_j[\omega_z], \hspace{0.5cm} a_j \in \R.
\end{equation} 
Next, consider the non-homogeneous operators $P_j$ given by \eqref{relpq} for $b_r=-a_{n-r}$, where $a_0,\dots, a_{n-1}$ are the constants in \eqref{finitetype} and $1\leq r\leq n$. This means, by \eqref{finitetype}, that $$P_n=Q_n-\sum_{j=0}^{n-1} a_j Q_j =0.$$ Then $\tau_n'=0$, and we choose its associated integration constant so that $\tau_n=0$. This implies that the coefficients of $\xi_\landa$ vanish for $j\geq n+1$, i.e. $P_j=\tau_j=\sigma_j=0$ for all $j\geq n+1$, by taking all subsequence integration constants to be zero. Therefore, we have 
 \begin{equation}\label{def:pochiti}
\xi_{\landa} (z)=\left(\def\arrraystretch{1.5}\begin{array}{cc} 0 & -1/2 \\ 0 & 0 \end{array}\right) \landa^{-1} + 
\sum_{j=0}^n \left(\def\arrraystretch{1.5}\begin{array}{cc} P_j & \tau_j 
\\ \sigma_j & -P_j \end{array}\right) \landa^j, \end{equation} with 
\begin{equation}\label{pochi}
\left(\def\arrraystretch{1.5}\begin{array}{cc} P_n & \tau_n \\ \sigma_n & -P_n \end{array}\right)=\left(\def\arrraystretch{1.5}\begin{array}{cc} 0 & 0\\ \sigma_n & 0 \end{array}\right),
\end{equation} 
with $\sigma_n$ a holomorphic function. We will choose the smallest $n\in \N$ for which \eqref{finitetype} holds. In this way, we will assume without loss of generality that $\sigma_n(z)$ is non-zero, and $P_j=\tau_j=\sigma_j=0$ for $j>n$.

We can also check, using the highest order equation in \eqref{edotti} and $2\omega_z=g''/g'$, that 
\begin{equation}\label{sinc}
\sigma_n=c_0\, g', \hspace{0.5cm} \tau_{n-1}=\frac{c_1}{g'}, \hspace{0.3cm} c_0,c_1\in \R;
\end{equation}
Here, the reality condition $c_0,c_1\in \R$ come from the fact that $g'(x)>0$ along $y=0$. If we apply Proposition \ref{pro:releski} with $j=n$ and use $P_n=0$ we get 
\begin{equation}\label{sinc2}
e^{\omega}\tau_{n-1}=e^{-\omega}\overline{\sigma_n}.
\end{equation} 
Substituting \eqref{sinc} into \eqref{sinc2} and using $|g'|^2=e^{2\omega}$, we obtain 
\begin{equation}\label{sinc3}
c_1=c_0.
\end{equation}

\begin{remark}\label{xibounded}
By construction, all $P_j$ are functions of $\omega_z$ and its derivatives. In addition, by Corollary \ref{omebounded}, all $P_j$ and its derivatives are bounded on the band $\cU\subset \C$. Hence, $\xi_\landa$ is bounded on $\cU$.
\end{remark}

Noting that $\xi_\landa$ takes values in $\mathfrak{sl}(2,\C)$, one can easily deduce that $\det \xi_\landa$ does not depend on $z$: 
\begin{equation}\label{nodet}
\parc_z \det \xi_\landa=-{\rm tr}\left(\xi_\landa \parc_z \xi_\landa\right)= -{\rm tr} \left(\xi_\landa \, [\xi_\landa,\cW_\landa]\right)=0.
\end{equation} 
We can then define the \emph{spectral polynomial} $a(\landa)$ as 
\begin{equation}\label{defa}
a(\landa):= \landa \det \xi_\landa.
\end{equation}
The polynomial $a(\landa)$ is of the form $$a(\landa)= -\frac{1}{4} + \cdots + a_{2n} \landa^{2n}.$$ Along $y=0$ one can check directly that $P_j,\tau_j,\sigma_j$ take real values. This implies that the coefficients of $a(\landa)$ are real. The leading factor of $a(\landa)$ is then 
\begin{equation}\label{sinc4}
a_{2n}=-c_0^2<0.
\end{equation}

\begin{lemma}\label{negrut}
On a Serrin band, all roots of the spectral polynomial $a(\landa)$ are negative.
\end{lemma}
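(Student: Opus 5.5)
Since $a(0)=-\tfrac14\neq 0$, the statement amounts to showing that no root of $a(\landa)$ is positive and no root is non-real. I would argue by contradiction. Fix a root $\landa_0\in\C\setminus(-\8,0]$, so that $\det\xi_{\landa_0}\equiv 0$ on $\cU$, and restrict everything to the real axis $y=0$. There $\eta=\omega_z=\tfrac12\omega_x$ is real, and it is bounded together with all its derivatives by Corollary \ref{omebounded}. The plan is to convert the degeneracy of $\xi_{\landa_0}$ into a bounded eigenfunction of a one-dimensional Schrödinger operator in Miura form. We then use the sign of that operator and the reality of its spectrum to rule out $\landa_0$.

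\emph{Step 1 (kernel vector).} By \eqref{def:pochiti}, $\xi_{\landa_0}(x)$ is a nonzero nilpotent matrix, because its $\landa^{-1}$ coefficient is nonzero. Its kernel is therefore a line, spanned for instance by $v=(\tau,-P)^T$ or by $v=(P,\sigma)^T$, whichever is nonzero. Differentiating $\xi v=0$ and using the Lax equation \eqref{edotti} $\xi'=\xi\cW-\cW\xi$ gives $\xi(v'+\cW v)=0$. Hence $v'=-\cW_{\landa_0}v+\mu v$ for some scalar function $\mu$. Setting $\psi:=e^{-\int\mu}\,v$, we get a solution of the linear system $\psi'=-\cW_{\landa_0}\psi$ along $\R$. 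Writing this system out and eliminating one component produces a scalar equation of the form
\[
-\psi_1''+(\eta^2\pm\eta')\psi_1=-\frac{1}{4\landa_0}\,\psi_1 .
\]
The sign in front of $\eta'$ depends on which component is kept; with the other choice one gets the conjugate Miura operator. The operator on the left is $\mathcal A^*\mathcal A$ with $\mathcal A=\mp\tfrac{d}{dx}+\eta$, a self-adjoint nonnegative operator on $L^2(\R)$ with bounded smooth potential.

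\emph{Step 2 (growth control), which I expect to be the main obstacle.} I need $\psi$ to be polynomially bounded, or at least of subexponential growth, on $\R$. Once that holds, Schnol's theorem (every polynomially bounded generalized eigenfunction has its eigenvalue in the spectrum) forces $-1/(4\landa_0)\in\sigma(\mathcal A^*\mathcal A)\subset[0,\8)$. That inclusion is impossible both for $\landa_0>0$, which gives a negative number, and for $\landa_0\notin\R$, which gives a non-real number. The difficulty is that $v$ is bounded by Remark \ref{xibounded}, but the gauge factor $e^{-\int\mu}$ need not be. My first attempt is to compute $\mu$ explicitly from the nilpotent structure $\xi=v\,u^T$ with $u^Tv=0$; I expect $\mu$ to be a logarithmic derivative such as $\mu=-(\log\tau)'$ up to bounded terms. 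I would also use that the nilpotent factorization gives $\psi_1\psi_2$ proportional to an entry of $\xi$. Then $|\psi_1\psi_2|$ is bounded, which pins down the product.

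If this growth estimate fails, I would fall back on a direct Riccati argument. Let $f:=\psi_1'/\psi_1$. The bounded functions $P,\tau,\sigma$ satisfy algebraic relations which express $f$ as a bounded function, away from the zeros of $\tau$. These relations come from $\det\xi=0$ and from \eqref{dere}. For real $\landa_0>0$, the Riccati equation $f'+f^2=\eta^2\pm\eta'+1/(4\landa_0)$ with $\eta$ bounded forces $f$ to have a definite sign at both ends, and this contradicts the boundedness of $P$ and $\tau$. For non-real $\landa_0$, taking imaginary parts in the Wronskian identity $\big(\overline{\psi_1}\psi_1'-\psi_1\overline{\psi_1}'\big)'=2i\,{\rm Im}\big(1/(4\landa_0)\big)|\psi_1|^2$ and integrating over long intervals gives the contradiction, provided $|\psi_1|$ is controlled as above.

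Once Steps 1–2 are in place, every root satisfies $\landa_0<0$, since $0$ itself is excluded by $a(0)=-\tfrac14$. This proves the lemma.
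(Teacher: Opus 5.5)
Your route is the paper's: a kernel vector of the nilpotent matrix $\xi_{\lambda_0}(x,0)$ gives a solution of $\psi'=-\cW_{\lambda_0}\psi$, hence an eigenfunction of the nonnegative Miura-factorized Schr\"odinger operator, and Shnol's theorem finishes. The genuine gap is Step 2, which you flag as the main obstacle but leave open.

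Neither of your attempts closes it:
\begin{itemize}
\item Bounding $\psi_1\psi_2$ alone does not bound $\psi$.
\item Your guess for $\mu$ is off. For $v=(\tau,-P)^T$ one gets $\mu=\tfrac12(\log\tau)'$, so $e^{-\int\mu}$ is a multiple of $\tau^{-1/2}$. It really does blow up at zeros of $\tau$, and is only compensated by $v$ vanishing there.
\item The Riccati/Wronskian fallback is not carried out, and it presupposes the same control of $|\psi_1|$.
\end{itemize}

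The missing observation is that the correctly normalized kernel vector is automatically bounded. Solve $\psi'=-\cW_{\lambda_0}\psi$ on $\R$ with $0\neq\psi(0)\in\ker\xi_{\lambda_0}(0)$. Since $(\xi_{\lambda_0}\psi)'=-\cW_{\lambda_0}(\xi_{\lambda_0}\psi)$, you get $\xi_{\lambda_0}\psi\equiv0$ with no gauge function at all. A direct check shows that
\[
N(\psi):=\begin{pmatrix}-\psi_1\psi_2&\psi_1^2\\-\psi_2^2&\psi_1\psi_2\end{pmatrix}
\]
satisfies the same Lax equation $N'=[N,\cW_{\lambda_0}]$, and $N(\psi)$ is nilpotent with kernel spanned by $\psi$. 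Hence $\xi_{\lambda_0}=\alpha N(\psi)$ for a scalar $\alpha$. Differentiating gives $\alpha'N(\psi)=0$, so $\alpha$ is a nonzero constant. Therefore $\psi_1^2=\tau/\alpha$ and $\psi_2^2=-\sigma/\alpha$ are bounded by Remark \ref{xibounded}. So $\psi$ and $\psi'=-\cW_{\lambda_0}\psi$ are bounded, and Shnol applies. This is exactly what the paper does: it writes $\xi^0=\begin{pmatrix}-fh&f^2\\-h^2&fh\end{pmatrix}$ and reads the linear system for $(f,h)$ off \eqref{edotti}.

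A second, smaller error: $\xi_{\lambda_0}(x)\neq0$ does not follow from the $\lambda^{-1}$ coefficient being nonzero, since after evaluating at $\lambda_0$ that term can be cancelled by the others. What you need is the minimality of $n$ fixed in Section \ref{sec:kdv}. If $\xi_{\lambda_0}$ vanished at one point, it would vanish identically by the linear Lax equation. Dividing $\xi_\lambda$ by $(1-\lambda/\lambda_0)$, together with the conjugate factor if $\lambda_0\notin\R$, would then give a lower-order relation. This is not a technicality. Replacing $\xi_\lambda$ by $(1+c\lambda)\xi_\lambda$ gives a valid non-minimal field with spectral polynomial $(1+c\lambda)^2a(\lambda)$. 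That polynomial has the root $-1/c$, which is positive when $c<0$, so the lemma fails without minimality.
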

\begin{proof}
Let $\landa_0\in \C$ be a root of $a(\landa)$. Note that $\landa_0\neq 0$, since $a(0)=-1/4$. Denote $\xi^0(x):=\xi_{\landa_0}(x,0)$. Then ${\rm trace}(\xi^0)={\rm det}(\xi^0)=0$, so we can write 
\begin{equation}\label{chifh}
\xi^0(x):= \left(\def\arrraystretch{1.5}\begin{array}{cc} -f h & f^2 \\ -h^2 & f h \end{array}\right) (x,0),
\end{equation}
for adequate smooth functions $f(x),h(x):\R\flecha \C$. Note that $2\eta(x,0)=\omega_x(x,0)\in \R,$ due to the symmetry $\omega(x,y)=\omega(x,-y)$. Then, from \eqref{edotti} we have $$f_x= -\eta f + \frac{1}{2\landa_0}  \eta, \hspace{0.5cm} h_x= \frac{f}{2}+\eta h,$$ and from there, differentiating again, we have 
\begin{equation}\label{esro}
-h_{xx} + V(x)h=-\frac{1}{4\landa_0} h, \hspace{0.5cm} V(x):=\eta'(x)+\eta(x)^2.
\end{equation}
We note that $L:=-\parc_{xx}+V$ is the classical Schrodinger operator associated with KdV theory, which can be factorized as $$L=\left(\parc_x+\eta \right)\left(-\parc_x+\eta \right)\geq 0.$$ Now, $V(x)$ in \eqref{esro} is bounded in $\R$, by Corollary \ref{omebounded}. Also, both $h,h'$ are bounded in $\R$, by Remark \ref{xibounded}. Applying Shnol's classical theorem \cite{Sh}, this means $\frac{-1}{4\landa_0}$ belong to the spectrum of $L$, and hence it is positive. Thus, all $\landa_0$ are negative.
\end{proof}

\section{Uniqueness in the presence of a vertical symmetry}\label{sec:uniqueness}
The aim of this section is proving the theorem below.
\begin{theorem}\label{mainth:sim}
Let $\Omega=\{(x_1,x_2):|x_2|\leq f(x_1)\}$ be a Serrin band. Assume $f(-s)=f(s)$. Then $\Omega$ is an element of the unduloidal family $\cF$.
\end{theorem}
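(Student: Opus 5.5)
We prove Theorem \ref{mainth:sim} by showing that the minimal $n$ in \eqref{finitetype} satisfies $n\leq 1$. By \cite{CFM}, a Serrin band of spectral genus $\leq 1$ is foliated by planar capillary curves and is therefore flat or unduloidal. The argument counts the roots of the spectral polynomial $a(\landa)$. By Lemma \ref{negrut} all its roots are negative. We will show that, under the vertical symmetry, at most one of them (counted with multiplicity) can be compatible with the minimality of $n$. Since $\deg a=2n$, this forces $n\leq 1$.

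\textbf{Step 1: translating the symmetry to the conformal parameter.} The reflection $x_1\mapsto -x_1$ preserves $\Omega$. Uniqueness of $g$ up to translation lets us normalize $g(-\bar z)=-\overline{g(z)}$. Then $\omega(-x,y)=\omega(x,y)$, so $\eta=\omega_z$ satisfies $\eta(-\bar z)=-\overline{\eta(z)}$. Each homogeneous operator $Q_j$ has a definite parity under $x\mapsto -x$, because it is a differential polynomial of fixed weight. Tracking this parity through the recursion \eqref{dere}, we can choose the real integration constants so that, along $y=0$:
\begin{itemize}
\item $P_j(x)$ is odd in $x$;
\item $\tau_j(x)$ and $\sigma_j(x)$ are even in $x$.
\end{itemize}
Consequently $\xi_\landa(0)$ is off-diagonal for every real $\landa$, and
$$a(\landa)=-\landa\,\tau(0,\landa)\,\sigma(0,\landa),$$
where $\tau(0,\landa)=\sum_{j\ge -1}\tau_j(0)\landa^j$ and $\sigma(0,\landa)=\sum_{j\ge 0}\sigma_j(0)\landa^j$ are real polynomials. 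Their degrees are $\leq n-1$ (after multiplying by $\landa$) and $n$ respectively. Moreover $\sigma_n(0)=c_0g'(0)$ and $\tau_{n-1}(0)=c_1/g'(0)$ with $c_1=c_0$ by \eqref{sinc}, \eqref{sinc3}.

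\textbf{Step 2: excluding roots beyond the first via minimality.} Let $\landa_0<0$ be a root of $a$. At $x=0$, one of the two factors vanishes at $\landa_0$. In the factorization \eqref{chifh}, this means $f(0)=0$ or $h(0)=0$. The function $h$ solves the Schr\"odinger eigen-equation \eqref{esro} with $E=-1/(4\landa_0)>0$. By Shnol's argument it is a bounded generalized eigenfunction of $L=(\parc_x+\eta)(-\parc_x+\eta)$, and it now has a definite parity (even or odd) about $x=0$. Since $V$ is even, a bounded parity solution at a given energy is unique up to scale. The key claim to prove is that two distinct roots, or a double root, would yield a nontrivial real polynomial $m(\landa)$ of degree $<n$ with $m(\landa)\xi_\landa$ of the form \eqref{def:pochiti}, i.e. a shorter relation \eqref{finitetype}, contradicting the minimality of $n$. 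To construct $m$, we would use that the $\xi_\landa$ arising from the relations of degree $\le n$ form a linear space on which evaluation at $x=0$ (together with the symmetry) is injective. Matching the vanishing of the $\landa$-polynomial $\tau(0,\cdot)$ or $\sigma(0,\cdot)$ at several points then produces such a combination.

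\textbf{Step 3: using the boundary condition and finishing.} The Robin condition from Proposition \ref{pro:releski} is applied at the symmetric point $(0,y^*)$. There $\omega_x=0$, and the real parts of the relevant quantities have fixed parity. This constrains the sign pattern of $\tau(0,\cdot)$ and $\sigma(0,\cdot)$ at the negative roots and rules out the remaining possibilities, so $a(\landa)$ has degree at most two. Hence $n\leq1$ and \cite{CFM} applies. The main obstacle is Step 2: building the shorter relation from the root structure at the single point $x=0$. My first attempt would be a Wronskian-type argument: compare the polynomial $\xi_\landa$ with the squared-eigenfunction expansion of $L$, and show that each independent negative root contributes an independent bounded Jacobi field, whose number the Appendix lemma bounds.
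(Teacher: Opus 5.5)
Your Step 1 is correct and matches the paper's setup, with one correction: $T(\landa)=\landa\tau(0;\landa)$ has degree exactly $n$, not $\le n-1$, since $\tau_{n-1}(0)=c_0/g'(0)\neq0$. This is what turns a root count for $T$ into a bound on $n$.

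The proof breaks at Step 2, whose key claim is false. Minimality of $n$ can only be violated at a $\landa_0$ with $\xi_{\landa_0}(0)=0$. In that case $\xi_{\landa_0}\equiv0$ by \eqref{edotti}, and dividing $\xi_\landa$ by $(1-\landa/\landa_0)$ gives a shorter polynomial solution. Multiplying by a polynomial $m(\landa)$, as you suggest, only raises the degree. So all that minimality yields is that $S=\sigma(0;\cdot)$ and $T$ have no common root.

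At a root $\landa_0$ of $T$ alone, $\xi_{\landa_0}(0)=S(\landa_0)E_{21}\neq0$. Neither several such roots nor a repeated one conflicts with minimality. Indeed, odd finite-gap mKdV potentials of arbitrarily large minimal genus exist, for instance those obtained by the Miura map from even finite-gap Schr\"odinger potentials such as Lam\'e's. So no argument using only the spectral data, the symmetry and minimality can give $n\le1$. Likewise, the Shnol argument along $y=0$ only recovers the sign of the roots (Lemma \ref{negrut}). The Appendix lemma gives a non-explicit dimension bound depending on $\|F\|_\8$ and $L$, not $1$. Also, ``at most one root of $a$'' as you phrase it would exclude the unduloidal bands themselves, where $S$ and $T$ each have one root. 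The count must be made on $T$.

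What is missing is the mechanism by which the Serrin and capillary conditions cut down the roots of $T$; your Step 3 only asserts that some sign pattern does this. In the paper the work is done on the vertical symmetry segment $\{x=0,\ 0\le y\le y^*\}$, not on $y=0$:
\begin{enumerate}
\item Integrating $\mathfrak{q}=1/2$ along this segment gives the Serrin identity \eqref{sericon2}, which bounds $\int_0^{y^*}(e^{\omega}+e^{-\omega})\,dy$ by the Neumann constant.
\item One takes the frame $F^{-1}F_z=\cW_\landa$, $F(0)={\rm Id}$. Its first row $(\psi_1,i\psi_2)$ has $\psi_1,\psi_2$ real along $x=0$ and positive for $\landa<0$, $y>0$.
\item At a root $\landa_0$ of $T$, coprimality gives $S(\landa_0)\neq0$ and $\xi_{\landa_0}(iy)=S(\landa_0)F^{-1}E_{21}F$. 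So the boundary relation \eqref{ecu6} at $iy^*$ becomes $\cB(\landa_0)=0$, with $\cB$ as in \eqref{rut2}.
\item Rewrite $\cB$ through the Riccati quotient $\Phi(\landa)=e^{\omega}\psi_2/\psi_1$ at $y=y^*$, as in \eqref{rut3}. One proves $\Phi'>0$ and, using \eqref{sericon2}, $1+A\landa\Phi>0$ for $\landa<0$. Together these force $\cB'(\landa_0)>0$ at every negative zero.
\end{enumerate}
Hence $\cB$ has at most one negative zero, and it is simple; a multiple root of $T$ would give a multiple zero of $\cB$. So $T$ has at most one simple root, and $n=\deg T\le1$.
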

We will divide the proof into two steps. We first describe our basic equations in terms of a vertical symmetry along the $x=0$ axis (Section \ref{sec:vs1}) and then prove the result in Section \ref{sec:genbon}.
\subsection{Equations in the presence of a vertical symmetry}\label{sec:vs1}
Let $\Omega$ be a Serrin band, parametrized in our usual conformal coordinates $(x,y)$, and assume it is symmetric with respect to $x=0$, i.e., $\omega(-x,y)=\omega(x,y)$. We also recall that $y=0$ is always a symmetry axis for $\Omega$. Let $\xi_\landa(z)$ be the holomorphic field in \eqref{def:pochiti}, and let $z_0:=(0,0)$. Then 
\begin{equation}\label{zetzet}
\zeta_\landa(z_0)=  \left( \def\arraystretch{1.1}\begin{array}{cc} 0& T(\landa)/\landa \\ S(\landa)  & 0 \end{array}\right),
\end{equation}
where $T(\landa):=\landa \tau(z_0;\landa)$ and $S(\landa):=\sigma(z_0;\landa)$. The spectral polynomial $a(\landa)$ is given then by $$a(\landa)= \landa\, {\rm det}\, \xi_{\landa}(z_0) = -S(\landa)T(\landa).$$ In this way, $S(\landa),T(\landa)$ are real polynomials. 

Let us look now at the capillary boundary condition along $y=y^*$. Since $J:={\rm Re} P$ is a linear combination of the hierarchic Jacobi fields $h_j={\rm Re}\, Q_j[\omega_z]$, we have by \eqref{confojaco} 
\begin{equation}\label{jaj}
J_y= A \cosh(\omega) J \hspace{0.5cm} \text{along $y=y^*$}.
\end{equation} 
Also, the vertical symmetry of $\omega(x,y)$ along $x=0$ gives $P(iy), P''(iy)\in i\R$, and $P'(iy)\in \R$. Now, differentiation of \eqref{jaj}  with respect to $x$ and Cauchy-Riemann equations give $$P''= -i A \cosh(\omega) P' \hspace{0.5cm} \text{at $z=iy^*$.}$$ Also, $\eta=-\frac{i}{2}\omega_y = -\frac{iA}{2}\sinh\omega$ at $z=iy^*$ by \eqref{bofla}. Putting these expressions into \eqref{dere} gives
\begin{equation}\label{ecu6}
\landa e^{\omega} \tau - e^{-\omega} \sigma + \frac{2i}{A} P =0 \hspace{0.5cm} \text{at $z=iy^*$.}
\end{equation}
The left-side term on \eqref{ecu6} is a polynomial expression of degree $n$ in $\landa$. Using \eqref{sinc2} and the symmetry along $x=0$ gives $\sigma_n=e^{2\omega}\tau_{n-1}$ at $z=iy^*$. Now, $a(\landa)$ has degree $2n$ by \eqref{sinc4}, and so both $S(\landa),T(\landa)$ have degree $n$. By Lemma \ref{negrut}, all roots of $S(\landa), T(\landa)$ are negative.

%
%
Recall we are assuming without loss of generality that $n\in \N$ is the smallest number for which \eqref{finitetype} holds. We prove next that this implies that $S(\landa),T(\landa)$ have no common roots. Indeed, if $\landa_0$ is a common root of $S,T$, then $\xi_{\landa_0} (z_0)=0$, and from \eqref{edotti} we have $\xi_{\landa_0}(z)\equiv 0$. Then we can consider $$\hat{\xi}(z)=\frac{\xi_{\landa}(z)}{(1-\landa/\landa_0)}.$$ This gives a new polynomial solution to \eqref{edotti}, but with a lower degree. Reformulation of the mKdV recursive operators from $\hat{\xi}$ gives a contradiction with the minimality property of $n$. 

We next look at the Serrin condition. 
From $\mathfrak{q}=v_{zz}-2\omega_z v_z=1/2$ together with $\omega_x(0,y)=v_x(0,y)=0,$ we obtain along the symmetry line $x=0$ that 
\begin{equation}\label{inteparaderi}
\frac{d}{dy} \left(e^{-\omega(0,y)} v_y(0,y)\right)= -e^{\omega(0,y)}-e^{-\omega(0,y)}.
\end{equation} 
Note that $v_y(0,0)=0$. Hence we can integrate \eqref{inteparaderi} and evaluate at $y=y^*$ to obtain, using \eqref{sericon}, that 
\begin{equation}\label{sericon2}
\int_0^{y^*} \left(e^{\omega}+e^{-\omega}\right) dy = -e^{-\omega(0,y^*)} v_y(0,y^*) = \frac{2}{A}.
\end{equation}

\subsection{The spectral genus bound}\label{sec:genbon}

Let us now consider the holomorphic frame $F=F(z;\landa)$ obtained by solving 
\begin{equation}\label{edofre}
F^{-1} F_z = \cW_\landa, \hspace{0.5cm} F(z_0)={\rm Id}.
\end{equation}
From \eqref{edotti}, \eqref{edofre} we obtain that $(F \xi_\landa F^{-1})_z=0$, and so, by \eqref{zetzet},  
\begin{equation}\label{zetzet2}
\xi_\landa(z)=F^{-1} \xi_\landa(z_0) F = F^{-1} \left( \def\arraystretch{1.1}\begin{array}{cc} 0& T(\landa)/\landa \\ S(\landa)  & 0 \end{array}\right) F.
\end{equation}
Write $F=(F_{ij})$ and $(F_{11},F_{12})= (\psi_1, i\psi_2)$. From \eqref{edofre}, we have along $x=0$ and denoting $\psi_j(y,\landa):= \psi_j(iy, \landa)$, $j=1,2$, that
\begin{equation}\label{lasfis}
(\psi_1)_y = \frac{\omega_y}{2}\psi_1 + \frac{1}{2}\psi_2, \hspace{0.5cm} (\psi_2)_y= -\frac{1}{2\landa}\psi_1 - \frac{\omega_y}{2}\psi_2,
\end{equation}
with $\psi_1(0)=1$, $\psi_2(0)=0$. These equations imply that, for all $\landa<0$ and all $y\in (0,y^*]$, we have $\psi_1>0$, $\psi_2>0$.

We define next the quotient $$\phi(y,\landa):=e^{\omega(0,y)}\, \frac{\psi_{2}}{\psi_{1}}(y,\landa):(0,y^*]\times \R^-\flecha \R^+.$$ Differentiating using \eqref{lasfis} leads to 
\begin{equation}\label{dipsi}
\phi_y = -\frac{1}{2\landa} e^{\omega} - \frac{\phi^2}{2} e^{-\omega}, \hspace{0.5cm} \phi(0,\landa)=0.
\end{equation}
Now differentiate \eqref{dipsi} with respect to $\landa$, to obtain the linear differential equation in $y$ for $\phi_\landa:= \parc\phi/\parc \landa$ given by $$(\phi_\landa)_y + \phi e^{-\omega} \, \phi_\landa  = \frac{e^{\omega}}{2\landa^2}, \hspace{0.5cm} \phi_\landa (0,\landa)=0.$$ From here, it follows directly that $\phi_\landa(y,\landa)>0$ for $y\in (0,y^*]$.

Fix now $y=y^*$ and call $\Phi(\landa):=\phi(y^*,\landa)$. Thus, we have just proved 
\begin{equation}\label{derisii}
\Phi'(\landa)>0, \hspace{0.5cm} \landa<0.
\end{equation}
On the other hand, for any $\landa<0$ we also have $$0<\Phi(\landa) <-\frac{1}{2\landa} \int_0^{y^*} e^{\omega(0,y)} dy <-\frac{1}{2\landa}\, \frac{2}{A}= -\frac{1}{A\landa},$$ where the second inequality comes from the Serrin condition \eqref{sericon2}. Thus, 
\begin{equation}\label{derisii0}
1+A \landa \Phi(\landa)>0, \hspace{0.5cm} \landa<0.
\end{equation}
We now choose a root $\landa_0<0$ of $T(\landa)$, and use the capillary boundary condition. First from $T(\landa_0)=0$ and \eqref{zetzet2} we have
\begin{equation}\label{rut1}
\xi_{\landa_0} (iy)= S(\landa_0) F^{-1}\left( \def\arraystretch{1.1}\begin{array}{rr} 0& 0 \\ 1 & 0 \end{array}\right) F = S(\landa_0) \left( \def\arraystretch{1.1}\begin{array}{cc} -i \psi_1 \psi_2 & \psi_2^2 \\ \psi_1^2  & i \psi_1 \psi_2 \end{array}\right).
\end{equation}
Then, putting together \eqref{ecu6} and \eqref{rut1} at $z=iy^*$, we obtain that the capillary boundary condition \eqref{ecu6} is written as 
\begin{equation}\label{rut2}
\cB(\landa_0)=0, \hspace{0.5cm} \cB(\landa):= \landa e^{\omega} \psi_2^2 - e^{-\omega} \psi_1^2 +\frac{2}{A} \psi_1\psi_2,
\end{equation}
where in the formula for $\cB(\landa)$ we denote $\omega=\omega(iy^*)$ and $\psi_j=\psi_j(iy^*,\landa)$. Note that both $\psi_1,\psi_2$ depend on $\landa$. In other words, any root of $T(\landa)$ is a negative zero of $\cB(\landa)$. 

We finally bound the number of negative zeros of $\cB(\landa)$. To start, we write $\cB(\landa)$ alternatively as 
\begin{equation}\label{rut3}
\cB(\landa)= -\frac{\psi_1^2}{A} e^{-\omega} \left(A-2\Phi(\landa) - A \landa \Phi(\landa)^2\right).
\end{equation}
 Since $\psi_{1}>0$, we deduce that if some $\landa_0<0$ satisfies $\cB(\landa_0)=0$, then from \eqref{rut3}, \eqref{derisii}, \eqref{derisii0} we have $$\cB'(\landa_0)>0.$$
Thus, $\cB(\landa)$ has at most one negative zero, and such zero is simple if it exists.

Finally, all zeros of $T(\landa)$ are negative zeros of $\cB(\landa)$, so $T(\landa)$ has at most one zero $\landa_0<0$. In case $\landa_0$ had multiplicity great than one for $T(\landa)$, then one can show by differentiation of \eqref{ecu6} that it would also be a zero of multiplicity $>1$ for $\cB(\landa)$, a contradiction. This means that $T(\landa)$ has degree at most one. Since ${\rm deg}(T)=n$, we deduce $n\leq 1$.

Finally, it is proved in \cite{CFM} that any Serrin band $\Omega$ with $n\leq 1$ is foliated by capillary curves. In our situation, this means that $\Omega$ is a member of the unduloidal family $\cF$.

\section{The isospectral family}\label{sec:isospectral}

\subsection{Isospectral deformation theory}\label{sec:isodef}
Let $\Omega\subset \R^2$ be a Serrin band. In our usual parameters, take $\eta=\omega_z$. Consider for any meromorphic function $\phi=\phi(z)$ the higher-order meromorphic mKdV flows
\begin{equation}
\frac{\parc \phi}{\parc {\bf t}_j} :=-\frac{\parc}{\parc z} \, Q_j[\phi].
\end{equation}
In this way, each $\parc_{{\bf t}_j}$ is seen as a differential operator acting on $\phi$ and, by extension, on analytic expressions of its derivatives. 
One key property of these flows is that they commute: 
\begin{equation}\label{conmi}
\frac{\parc }{\parc {\bf t}_j} \left(\frac{\parc \phi}{\parc {\bf t}_k}\right)= \frac{\parc }{\parc {\bf t}_k} \left(\frac{\parc \phi}{\parc {\bf t}_j}\right).
\end{equation}

In Theorem \ref{serrinfinite} we proved that $\eta$ is an algebro-geometric mKdV potential, which satisfies \eqref{finitetype} for some real constants $a_0,\dots, a_{n-1}$. This means that the infinitesimal flow $\eta_{{\bf t}_n}$ is a linear combination of the lower order ones. Specifically, 
\begin{equation}\label{ag1}
\frac{\parc \eta}{\parc \mathfrak{s}}=0, \hspace{0.5cm} \frac{\parc \eta}{\parc \mathfrak{s}}:=\frac{\parc \eta}{\parc {\bf t}_n} - \sum_{j=0}^{n-1} a_j \frac{\parc \eta}{\parc {\bf t}_j},
\end{equation}
with $a_j\in \R$ given by \eqref{finitetype}. 

The fact that $\eta$ is an mKdV potential implies the global properties (1),(2) below. They follow from Section 2 of \cite{GW}, or alternatively from Meeks-Pérez-Ros \cite{MPR} after using the usual Miura transformation $u=\eta'-\eta^2$ to connect mKdV with the KdV hierarchy:
\begin{enumerate}
\item
$\eta(z)$, which was originally defined as a holomorphic function on the band $\cU=\R\times (-y^*,y^*)$, extends meromorphically to $\C$.
\item
Every pole $z_0$ of $\eta(z)$ is simple, and $$\eta(z)= \frac{m}{z-z_0} + O(z-z_0), \hspace{0.5cm} \text{for some $m\in \Z^*$}.$$
\end{enumerate}

By applying the Frobenius theorem, and due to the integrability condition \eqref{conmi}, we can argue as in \cite{MPR}, Proposition 5.3, and show that there exists a function $$\hat{\eta}=\hat{\eta}(z;\theta_0,\dots, \theta_{n-1}):\C\times \D^n(\ep)\flecha \overline{\C},$$ with each $\theta_j$ a complex variable, that satisfies $\hat{\eta}(\cdot, {\bf 0})= \eta$, $\hat{\eta}_{\bar{z}}=0$, and 
\begin{equation}\label{ecute}
\frac{\parc \hat{\eta}}{\parc \mathfrak{s}}=0, \hspace{0.5cm} \hat{\eta}_{\theta_j}=\frac{\parc \hat{\eta}}{\parc {\bf t}_j}, \hspace{0.5cm} j=0,\dots, n-1.
\end{equation}
So $\hat\eta(z;{\bf \theta})$ is a family of meromorphic functions of $z$, locally holomorphic in $(z,\theta)$ away from its poles. In principle, the argument is local, but the condition $\hat{\eta}_{\mathfrak{s}}=0$ implies $\hat\eta$ remains an mKdV potential, and so items (1)-(2) above apply globally.

In this sense, we obtain a holomorphic deformation $\hat{\eta}$ of $\eta$ by $n$ complex parameters $\theta_0,\dots, \theta_{n-1}$, so that for any fixed $(\theta_0, \dots, \theta_{n-1})$, the corresponding solution $\hat{\eta}(z)$ is an algebro-geometric mKdV potential with the same constants $a_0,\dots, a_{n-1}$ as those of $\eta$. That is, $\hat{\eta}$ also satisfies $$Q_n[\hat\eta]=\sum_{j=0}^{n-1} a_j Q_j[\hat\eta_z]$$ throughout the deformation. 

Now, one can define the holomorphic field $\hat\xi_\landa$ associated to $\hat\eta$ using \eqref{edotti}. It has an associated spectral polynomial $\hat{a}(\landa)=\landa\, {\rm det}\, \hat\xi_\landa$. Once here, it is well-known that mKdV hierarchy flows preserve the spectral polynomial; see e.g. Gesztesy-Ratnaseelan \cite{GR}, Section 4. Consequently, we have $\hat{a}(\landa)=a(\landa)$, independent of the deformation parameters. 
To ease the notation, we simply say that $\eta$ can be seen locally as a holomorphic function 
\begin{equation}\label{inteholo}
\eta=\eta(z,{\bf t}_0,\dots, {\bf t}_{n-1}),
\end{equation} 
where ${\bf t}_j\in \C$ denotes the flow along the $j$-th complex Jacobi field of the hierarchy, and the spectral polynomial $a(\landa)$ does not depend on ${\bf t}_0,\dots, {\bf t}_{n-1}$.

We now restrict to real times ${\bf s}_j={\bf t}_j\in \R$. Then, for $\omega$, we have the corresponding real flows
\begin{equation}\label{reflo}
\frac{\parc \omega}{\parc {\bf s}_j} =-2 {\rm Re}\, Q_j [\omega_z].
\end{equation} 
These flows also commute and let us consider 
\begin{equation}\label{oman}
\omega=\omega(z; {\bf s}_0,\dots, {\bf s}_{n-1}),
\end{equation} 
where the dependence on ${\bf s}_j\in \R$ is real analytic, and $\omega_{z\bar{z}}=0$. From properties (1)-(2) above, $\eta=\omega_z$ is a meromorphic function on $\C$ with simple poles of integer residues, and so $\omega$ is well defined in $\C$ minus these poles, at which it presents logarithmic singularities.

Note that, associated with each element $\omega$ as in \eqref{oman}, we can consider a holomorphic map 
\begin{equation}\label{isoge}
g=g(z;{\bf s}_0,\dots, {\bf s}_{n-1}),
\end{equation} 
defined at first on a simply connected neighborhood of the origin in which $\eta$ has no poles, and
given uniquely by the conditions 
\begin{equation}\label{inige}
g(0)=0,\hspace{0.5cm} g'(0)>0, \hspace{0.5cm} |g'|^2=e^{2\omega}.
\end{equation} Each $g'$ in this family is meromorphic in $\C$ with zeros and poles of even orders. Even though we will not use it, it can be shown that $g$ itself is also a meromorphic function in $\C$. Also, observe that $$\omega_{{\bf s}_0}=-\omega_x.$$ Thus, the action of ${\bf s}_0$ is given by translations $x\mapsto x+x_0$ of the horizontal conformal coordinate $x$. From the normalization $g(0)=0$ we are using, this ${\bf s}_0$ flow merely produces horizontal translations of the Serrin domain $\Omega$, by 
\begin{equation}\label{soflo}
\Omega_{{\bf s}_0}= \Omega-g(-{\bf s}_0).
\end{equation}

\begin{definition} Let $\Omega$ be a Serrin band, and let $g:\cU\flecha \Omega$ denote its normalized conformal parametrization. Associated to $\Omega$ we have then the algebro-geometric relation \eqref{finitetype} and the spectral polynomial $a(\landa)$.

 We define the \emph{isospectral family} $\mathcal I_\Omega$ of $\Omega$ as the set of all Serrin bands that have the same Neumann data $-c$, normalized conformal strip $\cU$, algebro-geometric relation \eqref{finitetype} and spectral polynomial $a(\landa)$ as $\Omega$. 
 
 By an abuse of notation we will also write $\omega\in \mathcal{I}_\Omega$ to denote that $\omega(x,y)$ is the harmonic function associated to a Serrin band of $\mathcal{I}_{\Omega}$.
\end{definition}

\subsection{Description of the isospectral family}
Let $\Omega$ be a Serrin band, with isospectral family $\mathcal{I}_\Omega$. Then we can consider the $n$-parameter family of harmonic functions $$\omega^{\bf s}=\omega(z; {\bf s}_0,\dots, {\bf s}_{n-1})$$ in \eqref{oman}, so that $\omega^{\bf 0} =\omega$. The next theorem shows that for ${\bf s}$ small enough, all these harmonic functions still generate Serrin bands in the isospectral family $\cI_{\Omega}$.

\begin{theorem}\label{isoteo}
There exists $\ep>0$ such that $\omega^{\bf s}\in \cI_\Omega$, whenever ${\bf s}\in (-\ep,\ep)^n$.
\end{theorem}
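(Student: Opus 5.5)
We use the converse construction of Section \ref{sec:confrep}. A holomorphic $g$ on $\overline\cU$ with $g'\neq 0$, $g(\R)\subset\R$, $g'(x)>0$, satisfying the capillary condition \eqref{bofla} on $y=y^*$ and the closing condition \eqref{ggam4} at $(0,y^*)$, produces an immersed Serrin domain with Neumann constant $c=-2/A$. So for $\omega^{\bf s}$ we must check four things:
\begin{itemize}
\item[(a)] $\omega^{\bf s}$ is smooth and bounded with all derivatives on $\overline\cU$ (no poles enter the strip);
\item[(b)] $\omega^{\bf s}$ satisfies \eqref{bofla} and the symmetry $\omega^{\bf s}(x,-y)=\omega^{\bf s}(x,y)$;
\item[(c)] \eqref{ggam4} holds for the associated $g^{\bf s}$;
\item[(d)] $g^{\bf s}(\cU)$ is an embedded band with $v^{\bf s}>0$.
\end{itemize}
The spectral polynomial and the relation \eqref{finitetype} are preserved along the flows by Section \ref{sec:isodef}, and the strip $\cU$ and $A$ are fixed by construction. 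Once (a)–(d) hold, $\omega^{\bf s}\in\cI_\Omega$.

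\emph{Step (b): the boundary conditions are preserved.} The vector field is $\partial_{{\bf s}_j}\omega=-2h_j$ with $h_j={\rm Re}\,Q_j[\omega_z]$. By \eqref{confojaco}, each $h_j$ is a Jacobi field, i.e. a solution of the linearization of $\omega_y=A\sinh\omega$. This is the linearized version of the fact we need, and we upgrade it to the nonlinear statement as follows. Set $E^{\bf s}:=\omega^{\bf s}_y-A\sinh\omega^{\bf s}$ on $y=y^*$. The identity \eqref{confojaco} holds for any harmonic $\omega$ satisfying the capillary condition, and the general form of the computation in \cite{CFM} should give
\[
\partial_{{\bf s}_j}E=\mathcal{L}_j[\omega]\,E,
\]
where $\mathcal{L}_j$ is a linear differential operator in $x$ whose coefficients depend polynomially on the boundary jets of $\omega$. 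We then use real analyticity in ${\bf s}$. Write $E$ as a function of the boundary jet $\beta=\omega(\cdot,y^*)$, as in the proof of Proposition \ref{pro:releski}. Since $E^{\bf 0}=0$ and every ${\bf s}$-derivative of $E$ at ${\bf s}=0$ is a combination of $x$-derivatives of $E^{\bf 0}$, all Taylor coefficients vanish, so $E^{\bf s}\equiv 0$. The reflection symmetry in $y$ is preserved for the same reason: the flows preserve the reality of $\eta$ on $\R$, because the $Q_j$ have real coefficients. Alternatively, we can run the argument with the algebraic boundary identity \eqref{releski} and \eqref{ecu6} as the invariant.

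\emph{Steps (a), (c), (d): openness.} We view the flow as real-analytic in ${\bf s}$ uniformly in $x\in\R$, which is where the main difficulty lies because the strip is noncompact. We first attempt to obtain uniform bounds from the Lax formulation. By Remark \ref{xibounded}, $\xi_\landa$ is bounded, and the flows act by conjugation by frames generated by bounded polynomial fields. So the poles of $\hat\eta$ move continuously, and by items (1)–(2) their distance to $\overline\cU$, which is positive at ${\bf s}=0$ by Corollary \ref{omebounded}, stays positive uniformly in $x$ for small ${\bf s}$. This uniform control will come from the finite-dimensional theta-type description of algebro-geometric potentials: the orbit is an $n$-dimensional torus-like set of translates in the Jacobian, on which $\sup_\cU|\eta|$ varies continuously. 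This gives (a) with $C^k$ bounds on $\omega^{\bf s}$ close to those of $\omega$.

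Condition (c) is a single real equation. Differentiating it in ${\bf s}$ and using (b), the $x$-derivative of the quantity in \eqref{ggam2} vanishes along $y=y^*$, as in Section \ref{sec:confrep}. It remains to check that its imaginary part at one point stays zero. This should follow from the $y$-symmetry: $g^{\bf s}$ maps $\R$ to $\R$, the identity \eqref{sericon2} is preserved, and $c_2$ is determined by a real combination. If needed, we will use the flux-type invariant of Section \ref{sec:apriori}.

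Finally, for (d): $g^{\bf s}$ is $C^1$-close to $g$ uniformly on $\overline\cU$. Its boundary curves have bounded curvature and stay a positive distance apart (Remark \ref{rem:estibon}), so embeddedness and injectivity persist, and $v^{\bf s}>0$ persists because $v$ is bounded below away from the boundary and $v_y\neq0$ near the boundary by \eqref{sericon}.

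The main obstacle is the uniformity in $x$ of this openness argument.
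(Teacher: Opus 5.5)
Your step (b) is acceptable. The defect $\partial_{\bf s}E$ is an analytic differential expression in the boundary jets of $\beta$ and $E$ that vanishes when $E\equiv 0$, so it lies in the differential ideal generated by $E$, and your Taylor argument then applies.

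\textbf{The genuine gap is step (c), which is the heart of the theorem.} Once (b) holds, the converse construction of Section \ref{sec:confrep} only shows that $\Gamma:=g-\overline{\varphi}-\frac{2i}{A}e^{i\vartheta}$ is constant along $y=y^*$. Its real part can be absorbed into the real constant of $\varphi$, but its imaginary part is an independent real constraint. Neither the capillary condition nor the symmetry $y\mapsto -y$ controls it. For example, $\omega\equiv 0$ satisfies (b) and the symmetry on every strip, whereas \eqref{ggam4} then pins down $y^*$ in terms of $A$.

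Your suggested justifications do not work:
\begin{enumerate}
\item \eqref{sericon2} was derived only under a vertical symmetry along $x=0$, which the flows do not preserve. Even in that case it is equivalent to ${\rm Im}\,\Gamma=0$ itself, so appealing to its preservation is circular.
\item The capillary flux of Section \ref{sec:apriori} is section-independent only when the Serrin conditions already hold.
\end{enumerate}

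The missing idea is to integrate the variations of $g$ and $\varphi$ explicitly through the hierarchy. For the flow $\dot\omega={\rm Re}\,P_j$ one has $(\dot g)'=g'P_j$ and $(\dot\varphi)'=-P_j/g'$. The recursions $\tau_{j-1}'=-P_j-2\eta\tau_{j-1}$ and $\sigma_j'=P_j+2\eta\sigma_j$, with $2\eta=g''/g'$, then give $\dot g=-g'\tau_{j-1}+r_1$ and $\dot\varphi=-\sigma_j/g'+r_2$. Choosing the free real constant $r_2=r_1$ yields
\[
\dot\Gamma=e^{i\vartheta}\Bigl(-e^{\omega}\tau_{j-1}+e^{-\omega}\overline{\sigma_j}+\tfrac{2}{A}\,{\rm Im}\,P_j\Bigr)\quad\text{on } y=y^*,
\]
which vanishes by the integrable boundary identity \eqref{releski} of Proposition \ref{pro:releski}. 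You mention \eqref{releski} only as an optional alternative for (b); its essential role is here.

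\textbf{Step (a) is left open, as you acknowledge, and the theta-function route is not justified.} The polynomial $a(\lambda)$ may have multiple roots, and continuity of $\sup_{\cU}|\eta|$ along the orbit is exactly the uniformity in question. The way around this is finite-dimensional. By \eqref{finitetype}, each flow acts on the $2n$-jet of $\eta$ at a point through one fixed polynomial ODE. By Corollary \ref{omebounded}, the jets of $\eta$ at all points of $\overline{\cU}$ lie in a compact set, so the jet flow exists and stays bounded for a uniform time. This gives holomorphy of $\eta^t$ on $\overline{\cU}$. Via $(g^t)'=g'\exp\bigl(\int_0^t P_j[\eta^s]\,ds\bigr)$ it also gives the bound $\|(g^t)'-g'\|_{\infty}\le C|t|$.

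\textbf{Step (d) also needs more than you give.} Uniform $C^1$-closeness alone does not give injectivity on a noncompact strip. Your argument for $v^{\bf s}>0$ presupposes uniform closeness of $v^{\bf s}$ to $v$ and a uniform interior lower bound for $v$, and you establish neither. Instead:
\begin{enumerate}
\item Use ${\rm Re}\,g'\ge g_0>0$ on $\overline{\cU}$, which follows from $|f'|$ bounded and the maximum principle, to bound the difference quotients of $g^t$ from below.
\item Bound $v^t$ via $|\varphi^t-\overline{g^t}|$. This equals $2/|A|$ on $y=y^*$ (since $\Gamma=0$ there) and has bounded $y$-derivative.
\item Apply the maximum principle to the bounded superharmonic $v^t$ vanishing on $\partial\cU$ to conclude $v^t>0$.
\end{enumerate}
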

\begin{proof}
Let $\omega$ be the harmonic function $\omega(x,y)$ in $\cU$ associated to our Serrin band $\Omega$. Let $\xi_\landa(z)$ denote its associated holomorphic field \eqref{def:pochiti}, and $P_j$ be its corresponding non-homogeneous mKdV operators, related to $Q_j$ by \eqref{relpq}. 

To start we will fix any $j\in \{0,\dots, n-1\}$ and consider the infinitesimal hierarchic flow
\begin{equation}\label{jflow2}
\dot{\omega}= {\rm Re}\, P_j[\omega_z],
\end{equation}
where ``dot'' denotes the time-derivative at time zero along the corresponding local flow constructed in Section \ref{sec:isodef}. Note that $A,y^*$ remain fixed.

We will divide the proof into several claims. 

\begin{claim}\label{claim1}
The infinitesimal flow \eqref{jflow2} preserves the capillary conditions
\begin{equation}\label{eskii0}
 \omega_y=0 \hspace{0.3cm} \text{along $y=0$}, \hspace{0.5cm} \omega_y = A\sinh\omega \hspace{0.3cm} \text{along $y=y^*$}.
 \end{equation} 
That is, it holds $(\dot\omega)_y=0$ along $y=0$, and $(\dot\omega)_y=A\cosh\omega\,\dot\omega$ along $y=y^*$.
\end{claim}
\begin{proof}
This is direct, since in \cite{CFM} it is proved that if a harmonic function $\omega(x,y)$ satisfying \eqref{eskii0}, then $h_j:={\rm Re} \, P_j[\omega_z]$ satisfies  
\begin{equation}\label{eskii1}
(h_j)_y=0 \hspace{0.3cm} \text{along $y=0$}, \hspace{0.5cm} (h_j)_y = A\cosh\omega \, h_j \hspace{0.3cm} \text{along $y=y^*$}.\end{equation}
\end{proof}
Next, recall that associated to $\omega(x,y)$ we have the holomorphic map $g$ in \eqref{isoge}. This produces via \eqref{vh}, \eqref{hh} a two-parameter family of solutions to $\Delta v + 2e^{2\omega} =0$ in $\cU$. In a Serrin band, one choice $(c_1,c_2)$ of the parameters within this family recovers the reparametrized solution $v=u\circ g$ in \eqref{defuve} for which the Serrin boundary condtions \eqref{sericon} hold. So it makes sense to define an infinitesimal deformation $\dot{v}$ of $v(x,y)$ in terms of $\dot{\omega}$ and $\dot{c_1}$, $\dot{c_2}$. With this, we have:

\begin{claim}\label{claim2}
The infinitesimal flow \eqref{jflow2} preserves the Serrin boundary conditions \eqref{sericon}. Specifically, there exist $\dot{c_1},\dot{c_2}\in \R$ such that the associated $\dot{v}(x,y)$ satisfies
\[
\dot v=0,\qquad
(\dot v)_y=-\frac2A e^\omega\, \dot\omega\,
\hspace{0.5cm} \text{along }y=y^*.
\]
\end{claim}

\begin{proof}
Write as in \eqref{dete},
$g'=e^{\omega}e^{i\vart}$, and let $\varphi$ be given by $\varphi'=1/g'$, and $\varphi(x)\in \R$ if $x\in \R$. In Section \ref{sec:confrep} we saw that the Serrin condition \eqref{sericon} is equivalent to \eqref{ggam2}, i.e., 
\begin{equation}\label{ggam20}
g-\overline\varphi -\frac{2i}{A} e^{i\vart} =0 \hspace{0.5cm} \text{along $x\mapsto (x,y^*)$,} 
\end{equation}
for some choice of the real integration constant that determines $\varphi(x)$

Now assume that \eqref{ggam20} holds, and consider the infinitesimal flow \eqref{jflow2}. Then 
\begin{equation}\label{gedo}
(\dot{g})'=g' P_j, \hspace{0.5cm} (\dot\varphi)' = -P_j/g'.
\end{equation} 
Now, from the recursion formulas
$$\tau_{j-1}'=-P_j-2\eta \tau_{j-1}, \hspace{0.5cm} \sigma_j'=P_j +2 \eta \sigma_j,$$ together with $2	\eta=g''/g'$, a computation shows that
\begin{equation}\label{dote1}
\dot{g}=-g' \tau_{j-1}+r_1, \hspace{0.5cm} \dot{\varphi}=-\frac{\sigma_j}{g'}+r_2,
\end{equation} 
with $r_1,r_2\in \R$. All these quantities are real along $y=0$. Fixing the choice $g(0)=0$ in the deformation, see \eqref{inige}, determines $r_1$, but the integration constant $r_2$ is in principle undetermined. We choose 
\begin{equation}\label{eres}
r_2=r_1
\end{equation}
for reasons that will become clear below. 
Denote $$\Gamma:= g-\overline\varphi-\frac{2i}{A} e^{i\vart}.$$ We have $\Gamma=0$ along $y=y^*$ by the Serrin condition \eqref{ggam20}. We now differentiate this equation using \eqref{dote1}, and obtain using also \eqref{eres} that
\begin{equation}\label{dote2}
\dot{\Gamma}=e^{i\vart}\left(-e^{\omega}\tau_{j-1}+e^{-\omega}\overline{\sigma_j}+\frac{2}{A} {\rm Im} (P_j)\right).
\end{equation}
The right-hand side vanishes by Proposition 5.2. Thus, $\dot\Gamma=0$ along $y=y^*$. Recall that, from \eqref{seribo},
$$
2v_z=g'(\varphi-\overline g)
=\frac{2i}{A}e^\omega-g'\, \overline\Gamma.$$
Differentiating this identity and using $\Gamma=\dot\Gamma=0$ along $y=y^*$, we obtain
$$
2(\dot v)_z=\frac{2i}{A}e^\omega\, \dot\omega.
$$
Taking real and imaginary parts yields
$$
(\dot v)_x=0,
\qquad
(\dot v)_y=-\frac2A e^\omega\, \dot\omega
\quad\text{along }y=y^*.
$$
In particular, \(\dot v\) is constant along this boundary. We choose the variation of the remaining real additive constant in \(v\) so that \(\dot v(0,y^*)=0\). Then \(\dot v=0\) along \(y=y^*\), proving the claim.
\end{proof}

We now consider the local hierarchy flow
\begin{equation}\label{jflow}
\parc_t \,\omega^t = {\rm Re}\, P_j[\omega_z^t], \hspace{0.5cm} \omega^0=\omega,
\end{equation}
which has \eqref{jflow2} as its infinitesimal version. That this flow can be integrated for $t$ small follows from Section \ref{sec:isodef}. Let $g^t$ denote 
the corresponding holomorphic maps, normalized by $g^t(0)=0$ and $(g^t)'(0)>0$ following \eqref{inige}. For sufficiently small $|t|$, these functions are defined on the fixed strip $\mathcal U$. Claim \ref{claim1} implies that the capillary conditions \eqref{eskii0} are preserved along \eqref{jflow}. 

Now, regarding Serrin boundary conditions, we first note that the converse construction of Section \ref{sec:confrep} shows how to construct a solution $v^t(x,y)$ from $\omega^t$ and two integration constants $c_1^t, c_2^t$ via \eqref{vh}, \eqref{hh}. Then, Claim \ref{claim2} shows that we can choose for each $t$ in the deformation these constants $c_1^t,c_2^t$ t so that 
$$
v^t=0,\qquad
(v^t)_y=-\frac2A e^{\omega^t}
\hspace{0.5cm}\text{along }y=y^*.$$
By the symmetry $v^t(x,-y)=v^t(x,y)$, the corresponding Serrin boundary condition also holds along $y=-y^*$.

The corresponding conditions on the lower boundary follow by reflection. This implies that $\Omega^t:=g^t(\cU)$ is an \emph{immersed} Serrin band, as discussed in Section \ref{sec:confrep}. We next show that $\Omega^t$ is indeed a \emph{true} Serrin band for $t$ small.

\begin{claim}\label{claim3}
For $\ep>0$ small enough and $t\in (-\ep,\ep)$, all $\Omega^t:=g^t(\cU)$ are Serrin bands.
\end{claim}
\begin{proof}
To start, let us prove that for $t$ small, $g^t$ remains holomorphic in $\overline \cU$. Since $\eta^t$ is an algebro-geometric potential, it satisfies an algebraic holomorphic ODE, and so it is determined by its initial jet at an arbitrary point. By Corollary \ref{omebounded}, all the initial jets of $\eta^t=\omega^t_z$ are uniformly bounded on $\cU$. Then, we can apply the ODE argument of \cite[Prop. 5.3]{MPR} to our stationary mKdV system. This gives some $\ep>0$ such that $\eta^t$ is holomorphic on $\cU$ if $|t|<\ep$, with uniformly bounded jets. In particular, the associated map $g^t$ given by \eqref{inige} is also holomorphic in $\cU$ for $|t|<\ep$. Moreover, since we have by the computation in \eqref{gedo} that
$$
\partial_t(g^t)'=(g^t)'P_j[\eta^t],$$
then integrating in $t$ we get
$$
(g^t)'(z)
=
g'(z)\exp\!\left(\int_0^t P_j[\eta^s](z)\,ds\right).
$$
Using that $|g'|$ is bounded in $\cU$, this implies 
\begin{equation}\label{dosgp}
\|(g^t)'-g'\|_{L^\infty(\overline \cU)}
\le C\, |t|, \hspace{0.5cm} C>0.
\end{equation}

We next prove that, maybe for a smaller $\ep>0$, all $g^t$ with $|t|<\ep$ are injective in $\cU$. Let $\Omega\subset \R^2$ be our initial Serrin band, which is given by
$$\Omega=\{(x_1,x_2):|x_2|<f(x_1)\}.$$ Recall that the boundary components of $\parc\Omega$ are parametrized by $x\mapsto g(x\pm i y^*)$, and that $|g'|$ is bounded above and below by positive constants. Since $|f'|$ is also bounded on $\R$, this implies that ${\rm Re}\, g' \geq g_0>0$ on $\parc \cU$, and hence on $\cU$ by the maximum principle. From \eqref{dosgp}, choosing a smaller $\ep>0$, we can assume $${\rm Re}\,(g^t)'\geq g_0/2>0.$$
Now, by the mean value theorem, for any $z_1\neq z_2$ in $\cU$ we have
$$
\Re\!\left(
\frac{g^t(z_2)-g^t(z_1)}{z_2-z_1}
\right)
=
\int_0^1
\Re(g^t)'\bigl(z_1+s(z_2-z_1)\bigr)\,ds
\ge g_0/2>0.
$$
Therefore,
$$
|g^t(z_2)-g^t(z_1)|
\ge \frac{g_0}{2} |z_2-z_1|,
$$
so $g^t$ is injective and proper on the closed strip $\cU$. 

To see that $\Omega^t:=g^t(\cU)$ is indeed a Serrin band, it only remains to show that $v^t(x,y)$ is positive in $\cU$. Note that it suffices to prove that $|v_z^t|$ is bounded in $\cU$. Indeed, in that case, since $v^t=0$ along $\parc \cU$ by construction, we deduce by integration along vertical segments that $v^t$ is bounded. And once $v^t$ is bounded, using $v^t=0$ along $\parc \cU$ and $\Delta v_t=-e^{2\omega^t}<0$, the maximum principle does the rest.

So, to prove that $|v_z^t|$ is bounded we recall \eqref{seribo}, which in our case gives $$2 v_z^t = (g^t)' \left(\varphi^t - \overline{g^t}\right).$$ Since $|(g^t)'|$ is bounded on $\cU$, we just need to bound $|\varphi^t - \overline{g^t}|$. But this holds directly since (i) along $y=y^*$ we have from $\Gamma=0$ that $|\varphi^t-\overline{g^t}|=2/|A|$, a constant, and (ii) we have $$(\varphi^t - \overline{g^t})_y= i\left(\frac1{(g^t)'}+\overline{(g^t)'}\right) \hspace{0.5cm} \text{in $\cU$}.$$ So $(\varphi^t - \overline{g^t})_y$ is bounded on $\cU$, due to our estimates for $(g^t)'$ and $1/(g^t)'$. Again, integrating along vertical segments proves $|\varphi^t - \overline{g^t}|$ is bounded in $\cU$, which was the remaining detail.

Hence, $\Omega_t$ is indeed a Serrin band for $|t|<\ep$ with $\ep>0$ small enough.
\end{proof}

Once here, the statement of Theorem \ref{isoteo} follows quite directly from Claim \ref{claim3}, as we explain next, First, the hierarchy flows \eqref{jflow} commute, and are linearly independent. This is due to the corresponding properties for the flows in the directions given by the homogeneous mKdV operators $Q_j$, and the relation \eqref{relpq} between them. So, any isospectral deformation ${\bf s}\flecha \omega^{\bf s}$ for ${\bf s}\in (-\ep,\ep)^n$ small enough is a composition of the flows \eqref{jflow} for $j=0,\dots, n-1$. Hence, all such harmonic functions $\omega^{\bf s}$, ${\bf s}\in (-\ep,\ep)^n$, generate Serrin bands so that $\omega^{\bf s}\in \cI_\Omega$. This completes the proof of Theorem \ref{isoteo}.
\end{proof}

\subsection{Compactness of the isospectral family}

\begin{theorem}\label{compaiso}
The isospectral family $\I:=\cI_{\Omega^0}$ of any Serrin band $\Omega^0\subset \R^2$ is compact.
\end{theorem}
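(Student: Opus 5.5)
We need to show that for any sequence $(\Omega_k)_k$ in $\I=\cI_{\Omega^0}$ there is a subsequence converging, in the topology given by the normalized conformal data ($\omega_k$ on the fixed strip $\cU$, with $g_k(0)=0$), to some element of $\I$. All $\Omega_k$ share the Neumann constant $c$, the strip width $y^*$, the relation \eqref{finitetype} with fixed constants $a_0,\dots,a_{n-1}$, and the spectral polynomial $a(\landa)$. The plan is to obtain uniform bounds on the $\omega_k$, extract a limit, and then check that the limit is a genuine Serrin band in the same isospectral class.

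\textbf{Step 1: uniform non-collapsing.} We first prove a uniform lower bound $f_k\geq \delta>0$ for the profile functions $f_k$ of the bigraphs $\Omega_k$ (normalized by \eqref{soflo} so that the neck or point of interest sits at $x_1=0$). We argue by contradiction, as in the proof of Theorem \ref{th:estc2}. If $\inf f_k\to 0$, then either the curvature of $\parc\Omega_k$ blows up, or the curvature stays bounded and the necks collapse.

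\emph{Curvature blow-up.} The blow-up analysis of Claim \ref{claim1} produces a hairpin limit with nonzero flux \eqref{herflux}. The rescaled bands have flux $\landa_k\cF(\Omega_k)$, so we need $\cF(\Omega_k)$ to be uniformly bounded; then $\landa_k\cF(\Omega_k)$ cannot converge to a fixed nonzero vector unless $\cF(\Omega_k)\to 0$ at rate exactly $1/\landa_k$. The key point is that the flux is an isospectral invariant. I would express $\cF$ through \eqref{fluxsi}, or better through the conformal data along $x=0$, as a function of the coefficients of $a(\landa)$, $c$ and $y^*$. Alternatively, one can show that $\cF$ is constant along the flows \eqref{jflow} by differentiating \eqref{defiflux} and using Claim \ref{claim2}. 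Either way $\cF(\Omega_k)=\cF(\Omega^0)$, so $\landa_k\cF(\Omega^0)\to\cF(\Omega_\8)\neq0$ is impossible.

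\emph{Collapsing necks with bounded curvature.} The argument following \eqref{anr2} uses only $|Du|\leq M(c)$ (Corollary \ref{cor:c1est}) and the curvature bound, so it gives $\delta>0$ uniformly in $k$.

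\textbf{Step 2: extraction of a limit.} With $\delta$ in hand, Corollary \ref{compaa} gives a subsequence with $\Omega_k\to\Omega_\8$ smoothly on compact sets, where $\Omega_\8$ is a Serrin band with constant $c$. To pass to the conformal side, I would use that the uniform $C^{k,\alfa}$ bounds on $\parc\Omega_k$ and on $\delta$ make the bounds \eqref{pome} and Corollary \ref{omebounded} uniform in $k$. Hence the normalized $g_k$ converge on $\overline\cU$ to a conformal parametrization $g_\8$ of $\Omega_\8$, and $\omega_k\to\omega_\8$ in $C^k_{loc}$.

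Two points need checking here:
\begin{enumerate}
\item The strip width stays $y^*$ in the limit. This follows because the conformal modulus is determined by local convergence together with uniform distortion bounds, so the limiting strip is again $\cU$.
\item The Hopf differential stays $\mathfrak{q}=1/2$, which holds since it is a local quantity.
\end{enumerate}

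\textbf{Step 3: the limit lies in $\I$.} The relation \eqref{finitetype} with the fixed $a_j$ is a closed differential condition on $\eta$, so it passes to the $C^k_{loc}$ limit. The coefficients of $a(\landa)=\landa\det\xi_\landa$ are polynomials in finitely many derivatives of $\eta$ at a point, so they converge, and $a(\landa)$ is preserved. Therefore $\Omega_\8\in\I$.

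\textbf{Main obstacle.} I expect the hardest step to be the flux invariance in Step 1, since it is what rules out hairpin blow-up without assuming a uniform neck bound. If a direct algebraic formula for $\cF$ in terms of $a(\landa)$ is not available, I would fall back on connectedness: prove invariance along the flows of Theorem \ref{isoteo}, and note that the alternative characterization of $\I$ by fixed $a_j$ and $a(\landa)$ forces every element to have bounded $\eta$-jets. That bound itself yields the uniform geometry and bypasses the blow-up analysis.
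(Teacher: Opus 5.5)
Your Steps 2 and 3 essentially match the paper: Corollary \ref{compaa}, then convergence $g_k\to g_\8$ (the paper uses the Carath\'eodory kernel theorem), then passing \eqref{finitetype} and $a(\landa)$ to the limit. The gap is in Step 1, and it is the heart of the theorem. Your exclusion of hairpin blow-up needs $\cF(\Omega_k)=\cF(\Omega^0)$. As you note yourself, uniform boundedness of the flux would not suffice. This invariance is never proved.
\begin{itemize}
\item You give no formula expressing $\cF$ through $a(\landa)$, $c$ and $y^*$.
\item Differentiating \eqref{defiflux} along the flows \eqref{jflow} would at best give invariance on the orbit $\cO_{\Omega^0}$. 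But $\I$ is defined as the set of \emph{all} Serrin bands with the given $c$, $\cU$, relation \eqref{finitetype} and $a(\landa)$.
\item Even along the flows, Claim \ref{claim2} only says that the deformation preserves the Serrin conditions. That cannot by itself force the first variation of the flux to vanish. For instance, with $c$ fixed the flux is $-c^3/3$ on the flat band but tends to $0$ along nearly pinched unduloidal bands.
\end{itemize}
So the uniform neck bound is unproved as written.

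Your fallback points in the right direction but misidentifies the mechanism. Uniform bounds on the $\eta$-jets along $y=0$ are not an isospectral phenomenon. Corollary \ref{kobe} gives them for every univalent map on a strip of width $y^*$. On their own they do not control the size of $g'$, and hence not the necks. The missing idea is the algebraic link at the top of the Killing field. By \eqref{sinc}, \eqref{sinc3} and \eqref{sinc4},
\[
\sigma_n=c_0g', \qquad \tau_{n-1}=c_0/g', \qquad c_0^2=-a_{2n},
\]
and $a_{2n}$ is the leading coefficient of $a(\landa)$, which is the same on all of $\I$. Since $\sigma_n$ and $\tau_{n-1}$ are differential polynomials in $\eta$, Corollary \ref{kobe} bounds them on $\R$ by a constant $C$ depending only on $y^*$. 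This gives
\[
\sqrt{-a_{2n}}/C\le g'(x)\le C/\sqrt{-a_{2n}}.
\]
Koebe's $1/4$ theorem on $D(x,y^*)\subset\cU$, together with $g(\R)=\R$, then yields $f\geq y^*\sqrt{-a_{2n}}/(4C)$ uniformly on $\I$. This is the paper's proof of non-collapsing. It needs no blow-up analysis and no flux invariance, and it is exactly where the fixed spectral polynomial enters the argument.
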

\begin{proof}
We start by deriving a uniform lower bound on the \emph{necksize} of any Serrin band $\Omega\in \I$. First, recall that $\sigma_j, \tau_j$ are given by derivatives of $\eta=\omega_z$ up to order at most $2j+1$, and $\tau_j=\sigma_j=0$ for $j>n$. Then, by Corollary \ref{kobe} there exists some $C>0$, only depending on the original Serrin band $\Omega^0$ (and so independent of the chosen $\Omega\in \I$), such that $|\tau_{n-1}(x)|, |\sigma_n(x)| \leq C$.
From there, we can now use \eqref{sinc}, \eqref{sinc3} and \eqref{sinc4} to obtain 
\begin{equation}\label{dob}
\frac{\sqrt{-a_{2n}}}{C} \leq g'(x) \leq \frac{C}{\sqrt{-a_{2n}}}, \hspace{0.5cm} \text{for all $x\in \R$}.
\end{equation}
Now, for $x\in \R$ we can apply Koebe's $1/4$ theorem to the disk $D(x,y^*)\subset \cU$, which gives $$D\left(g(x), \frac{y^* g'(x)}{4}\right)\subset \Omega.$$ This condition together with \eqref{dob} gives
\begin{equation}\label{dob2}
{\rm dist}(g(x),\parc \Omega)\geq \frac{y^* \sqrt{-a_{2n}}}{4C}. 
\end{equation}
Noting that $g(\R)=\R$, we deduce from \eqref{dob2} that, if $\Omega=\{(x_1,x_2): |x_2|\leq f(x_1)\}$, then
\begin{equation}\label{dob3}
f(x)\geq \frac{ y^* \sqrt{-a_{2n}}}{4C} >0, \hspace{0.5cm} \text{for all $x\in \R$}.
\end{equation}
Once here, we can proceed to prove compactness. Let $\{\Omega_k\}_k$ be a sequence of Serrin bands in $\I$. By Corollary \ref{compaa} and \eqref{dob3}, a subsequence of the $\Omega_k$ converges to a Serrin band $\Omega_{\8}$ smoothly on compact sets. 
So, it remains to show that \(\Omega_\infty\in\mathbb I\). Let $g_k:\mathcal U\rightarrow\Omega_k$ be the conformal parametrization of the domain $\Omega_k$, normalized in our usual way: $g_k(0)=0$ and $g_k'(0)>0$. By Caratheodory kernel theorem (see e.g. Section 1.4 in \cite{Pom}), $g_k\rightarrow g_\infty$ locally uniformly on $\mathcal U$, where $g_\infty:\mathcal U\to\Omega_\infty$ is a conformal bijection. All their holomorphic derivatives converge locally uniformly as well, and so
\begin{equation}\label{convecon}
\omega_k=\log|g_k'|\rightarrow
\omega_\infty=\log|g_\infty'|,
\qquad
\eta_k=\frac{g_k''}{2g_k'}
\rightarrow
\eta_\infty=\frac{g_\infty''}{2g_\infty'},
\end{equation}
smoothly on compact subsets of $\mathcal U$. Now, the solutions $v_k:\cU\flecha \R$ associated to $\Omega_k$ are reconstructed from $g_k$ via \eqref{vh}, \eqref{hh}, and have Hopf differential $\mathfrak{q}_k=1/2$. Thus, at the limit, the corresponding Serrin solution $v_{\8}:\cU\flecha \R$ to $\Delta v_{\8}+2e^{2\omega_{\8}}=0$ associated to $\Omega_{\8}$ also has Hopf differential $\mathfrak{q}_{\8}=1/2.$

The relation \eqref{finitetype} holds for $\eta_k=(\omega_k)_z$, for all $k$, and so it passes to the limit. With similar arguments, one can define the limit holomorphic field $\xi_\landa^{\8}$ and easily show that  
$$\landa \det \xi_\landa^{\8}
=
\lim_{k\to\infty}\landa\det\xi_\landa^{(k)}
=a(\landa).$$
Hence,  $\Omega_\infty\in\mathbb I$, proving the desired compactness.
\end{proof}

\subsection{The hierarchy orbit} Given a Serrin band $\Omega$, we had defined a map $${\bf s}=({\bf s}_0,\dots, {\bf s}_{n-1})\in \R^n \mapsto \omega^{\bf s},$$ where $\omega^{\bf s}:=\omega(z;{\bf s})$ is the harmonic function in \eqref{oman}, and $\omega^{\bf 0}$ is the original harmonic function $\omega$ associated with $\Omega$. From Theorem \ref{isoteo}, there exists some $\ep>0$ such that $\omega^{\bf s}$ defines a Serrin band in the isospectral set $\cI_{\Omega}$ if ${\bf s}\in (-\ep,\ep)^n$. Now, using this fact and the compactness result of Theorem \ref{compaiso}, a standard ODE continuation argument shows that $\omega^{\bf s}\in \cI_{\Omega}$, for every ${\bf s}\in\R^n$.

\begin{definition}
The \emph{hiearchy orbit} $\cO_\Omega$ of a Serrin band $\Omega$ is defined as $$\cO_\Omega:=\{\omega^{\bf s} : {\bf s}\in \R^n\}\subset \cI_\Omega.$$ Here we identify as usual the Serrin band $\Omega$ with its associated harmonic function $\omega$.
\end{definition}
We will also use the natural closure of this orbit,
\begin{equation}\label{closo}
\mathcal K_{\Omega}:=\overline{\mathcal O_{\Omega}}.
\end{equation}
By Theorem \ref{compaiso}, $\mathcal K_{\Omega}\subset \mathcal{I}_{\Omega}$ is compact, and every $\Omega\in \cK_{\Omega}$ is a Serrin band.

\section{Proof of Theorem \ref{th:main}}\label{sec:mainth}

\subsection{An isospectral symmetric Serrin band}
We prove next that, in the closure of an isospectral orbit $\cO_{\Omega^0}$ of any Serrin band $\Omega^0$ we can always find an element of the unduloidal family $\cF$.
\begin{theorem}\label{th:sime}
Given a Serrin band $\Omega^0$, there exists a Serrin band $\Omega\in \cK_{\Omega^0}$, of the form $\Omega=\{(x_1,x_2):|x_2|\leq f(x_1)\}$, that satisfies $f(-s)=f(s)$.

In particular, by Theorem \ref{mainth:sim}, $\Omega$ is an element of the unduloidal family $\cF$.
\end{theorem}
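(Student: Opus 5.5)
We need to find, in the compact closure $\cK_{\Omega^0}$ of the hierarchy orbit, a band symmetric with respect to a vertical line. The plan is a variational argument: choose a real quantity on $\cK_{\Omega^0}$ that is invariant under horizontal translations, attains an extremum by compactness, and whose extremality forces the vertical symmetry.

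Since the ${\bf s}_0$-flow only translates horizontally by \eqref{soflo}, we normalize translations by evaluating at the origin $z=0$. On a band parametrized in our usual coordinates, vertical symmetry about $x=0$ means $\omega(-x,y)=\omega(x,y)$. Because $\omega$ is harmonic and $\omega_y(x,0)=0$, it is determined by the real analytic function $\beta(x):=\omega(x,0)$. Hence it suffices to produce an element with $\beta$ even. Equivalently, all odd derivatives of $\eta$ vanish at $z=0$, i.e.\ all the $P_j$ and $\xi_\landa$ are of the off-diagonal form \eqref{zetzet} at $z_0$.

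\textbf{Step 1.} Use the reduction that $\eta$ is algebro-geometric. Through the Lax equation \eqref{edotti}, the whole function $\eta$ is determined by the matrix $\xi_\landa(0)$, whose coefficients are polynomial data of size at most $n$ in $\landa$, with fixed determinant $a(\landa)/\landa$. So it suffices to find an element of $\cK_{\Omega^0}$ for which the diagonal polynomial $P(0;\landa)=\sum_j P_j(0)\landa^j$ vanishes identically. Indeed, the reflection $x\mapsto -x$ acts on this data by $P\mapsto -P$, $\tau\mapsto\tau$, $\sigma\mapsto\sigma$ (using that everything is real on $y=0$). It therefore preserves \eqref{finitetype} and $a(\landa)$, and a fixed point of the reflection gives the symmetric band by uniqueness of the Lax/ODE solution with given initial data.

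\textbf{Step 2.} Consider the continuous map $\Psi:\cK_{\Omega^0}\flecha \R^{n}$, $\omega\mapsto (\tau_0(0),\dots,\tau_{n-1}(0))$, or, more simply, the function $T_0(\omega):=\tau(0;\landa_*)$ for a fixed real $\landa_*$. By Theorem \ref{compaiso}, any continuous function attains its maximum on $\cK_{\Omega^0}$. The hierarchy flows act on $\cK_{\Omega^0}$: the closure of an orbit of the commuting $\R^n$-action is invariant. At a critical point of the function $(\tau_0(0),\dots)$ along all $n$ flow directions, the first variations vanish. By the Lax structure, the variation of $\xi_\landa(0)$ along the $j$-th flow is a commutator $[\xi_\landa(0),\xi^{(j)}_\landa(0)]$, where $\xi^{(j)}$ is the truncation. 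Computing the $(1,2)$ entry of these commutators, one finds that they are linear combinations of the entries of $P(0;\cdot)$ against $\tau(0;\cdot)$.

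\textbf{Step 3.} Choose the functional cleverly: maximize $\tau_{n-1}(0)=c_0/g'(0)$, i.e.\ minimize $g'(0)$ at the normalized point, or more robustly maximize a combination that produces $n$ independent equations. Then show that the $n$ vanishing first variations force $P_0(0)=\dots=P_{n-1}(0)=0$. This uses the fact that $S,T$ have no common roots (the minimality of $n$, proved in Section \ref{sec:uniqueness}) to invert the resulting linear system, via a resultant/Bezout argument.

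I expect Step 3 to be the main obstacle: identifying the right extremal quantity so that criticality along the $n$ real flows yields exactly $P(0;\landa)\equiv 0$. If a direct choice fails, I would use instead the compact torus-like structure of $\cK_{\Omega^0}$ (via Jacobi variety of the real spectral curve) and a degree/fixed-point argument for the involution $P\mapsto -P$ on $\cK_{\Omega^0}$. Once the symmetric element is obtained, Theorem \ref{mainth:sim} places it in $\cF$.
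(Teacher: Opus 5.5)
\textbf{Assessment: genuine gap.} Your overall plan is the paper's. You extremize a continuous function on the compact set $\cK_{\Omega^0}$ (Theorem \ref{compaiso}). You use the $n$ real hierarchy flows of Theorem \ref{isoteo}, which keep the extremal band inside $\cK_{\Omega^0}$, to get $n$ first-order conditions. These should become $P_j[\eta](0)=0$ for $j=0,\dots,n-1$, and you conclude by ODE uniqueness for \eqref{finitetype}. Your Step 1 (the reflection acts by $P\mapsto -P$) is a correct form of that last step.

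\textbf{Where the gap is.} It is Step 3, which carries the actual content and which you leave open. The mechanism you sketch for it would fail.
\begin{itemize}
\item The fact that $S,T$ have no common roots was proved in Section \ref{sec:uniqueness} only under minimality of $n$, and only with $\xi_\landa(z_0)$ already off-diagonal as in \eqref{zetzet}, which is what you are trying to establish.
\item Minimality is not inherited by $\cK_{\Omega^0}$, and the extremal band is in general only a limit of orbit elements.
\item In exactly the case used later (Theorem \ref{th:isolated}), the limit satisfies $\xi_\landa=h(\landa)\hat\xi_\landa$. So $\xi_\mu\equiv 0$ at each root $\mu$ of $h$, and $S,T$ do share roots there.
\item Your other candidates don't work either. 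An $\R^n$-valued $\Psi$ has no maximum. For $\tau(0;\landa_*)$ you give no reason why $n$ bilinear conditions should force $P(0;\cdot)\equiv 0$.
\item The fallback via a fixed point of the reflection on a real Jacobian is undeveloped. It would also need the reflection to preserve $\cK_{\Omega^0}$, which is not evident.
\end{itemize}

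\textbf{The missing idea.} Choose a functional whose first variation along the $j$-th flow is literally ${\rm Re}\,P_j$. The paper maximizes $\Omega\mapsto \omega_\Omega(0,0)=\log g'(0)$. Since the flows are $\partial_t\omega={\rm Re}\,P_j[\omega_z]$ by \eqref{jflow}, criticality gives ${\rm Re}\,P_j[\eta](0)=0$. Reality of $\eta$ on $\R$ then gives $P_j[\eta](0)=0$ for $j=0,\dots,n-1$.

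In fact your first Step 3 candidate already does this, with no resultant needed. By \eqref{sinc} and \eqref{sinc3}, $\tau_{n-1}(0)=c_0e^{-\omega(0,0)}$. Here $c_0\neq 0$ is fixed on the isospectral family, since $c_0^2=-a_{2n}$ by \eqref{sinc4}. So extremizing $\tau_{n-1}(0)$ is the same as extremizing $\omega(0,0)$.

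\textbf{Finishing the argument.} From $P_j(0)=0$ one gets, triangularly, $\eta(0)=\eta''(0)=\cdots=\eta^{(2n-2)}(0)=0$. The reason is that each $Q_j$ is $\eta^{(2j)}$ plus monomials of odd weight in lower derivatives, and each such monomial contains an even-order factor. Next, $Q_j[-\eta(-z)]=-Q_j[\eta](-z)$, so $-\eta(-z)$ solves \eqref{finitetype} with the same initial data as $\eta$. Hence $\eta$ is odd, $g(-\bar z)=-\overline{g(z)}$, and $f$ is even.
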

\begin{proof}
By Theorem \ref{compaiso}, the continuous function $\Omega\in \cK_{\Omega^0}\mapsto \omega_\Omega(0,0)\in \R$ attains its maximum at some Serrin band, that will be denoted simply by $\Omega\in \cK_{\Omega^0}$. Denote its usual associated data by $g,\omega,\eta=\omega_z$, and recall we are always assuming $g(0)=0$ and $g'(0)>0$.

Starting with $\Omega$, Theorem \ref{isoteo} provides an $n$-parameter deformation family of $\Omega$ through Serrin bands of $\cK_{\Omega^0}$. Differentiating the preceding functional along these variations gives 
\begin{equation}
\operatorname{Re}P_j[\eta](0)=0,
\qquad j=0,\ldots,n-1.
\end{equation}
Since \(\eta\) and all its derivatives are real along the real axis, we obtain
\[
P_j[\eta](0)=0,
\qquad j=0,\ldots,n-1.
\]
This means $\eta(0)=\eta''(0)=\cdots = \eta^{(2n-2)}(0)=0$. Now, we can view \eqref{finitetype} as an ODE $\eta^{(2n)}= \cP(\eta,\dots, \eta^{(2n-1)})$ with real coefficients. Since $-\eta(-z)$ satisfies the same ODE with the same initial conditions, we deduce that $\eta(-z)=-\eta(z)$. From here and $2\eta=g''/g'$ we get $g(-z)=-g(z)$, and since $g(x)\in \R$ for $x\in \R$ we conclude $g(-\bar{z})=-\overline{g(z)}.$ This proves the desired symmetry for $\Omega$.
\end{proof}

\begin{remark}
Assume that our initial Serrin band $\Omega^0$ in Theorem \ref{th:sime} is \emph{non-flat}, i.e., it is not a slab between two parallel lines. Then the symmetric band $\Omega$ is also non-flat, by the argument below.

We first show that $\omega(x,0):=\omega_{\Omega^0}(x,0)>0$ for some
$x\in\mathbb R$. Let $v(x,y)$ be its associated solution, see \eqref{vh}. Then
$$
v_{xx}(x,0)=e^{-2\omega(x,0)}-1.$$
If $\omega(x,0)\leq0$ for every $x$, then $v(x,0)$ is convex on
$\mathbb R$. But $v$ is bounded, so $v(x,0)$ is constant and hence $\Omega^0$ is flat, a contradiction. Thus, the value $\omega_{\Omega}(0)$ at the maximizer band is also positive, and this implies $\Omega$ is non-flat.
\end{remark}

\subsection{Isospectral isolation of non-flat unduloidal bands}

\begin{theorem}\label{th:isolated}
Let $\Omega^0$ be a Serrin band of minimal stationary order $n\geq 2$. Then $\mathcal K_{\Omega^0}$ cannot contain a non-flat unduloidal band.
\end{theorem}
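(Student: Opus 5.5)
We argue by contradiction: suppose $\Omega\in\cK_{\Omega^0}$ is a non-flat unduloidal band. Every element of $\cK_{\Omega^0}$ lies in the isospectral family $\cI_{\Omega^0}$ by Theorem \ref{compaiso}. So $\eta=\omega_z$ for $\Omega$ satisfies the same algebro-geometric relation \eqref{finitetype} of order $n$, with the same constants $a_0,\dots,a_{n-1}$, and has the same spectral polynomial $a(\landa)$ of degree $2n\geq 4$. On the other hand, $\Omega$ is unduloidal, hence of spectral genus one. It therefore also satisfies a \emph{minimal} relation $Q_1[\eta]=a_0'Q_0[\eta]$, i.e. $P_1'=0$ for a suitable normalization, and it has a genus-one spectral polynomial $a_1(\landa)$ of degree $2$ with two negative roots (Lemma \ref{negrut}).

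The first step is to relate the two polynomial Killing fields of $\Omega$. If $\xi^{(1)}_\landa$ is the degree-one field of $\Omega$ and $\xi_\landa$ is the degree-$n$ field inherited from $\cI_{\Omega^0}$, both solve \eqref{edotti} with the same normalized $\landa^{-1}$ term. The standard uniqueness of polynomial solutions of the Lax equation, for a potential that is not of lower type, gives
\[
\xi_\landa = p(\landa)\,\xi^{(1)}_\landa
\]
for a real polynomial $p$ of degree $n-1$ with $p(0)=1$. Hence $a(\landa)=p(\landa)^2\,a_1(\landa)$. In particular $a(\landa)$ has multiple roots, namely the roots of $p$. Here non-flatness is used: a flat band has $\eta\equiv 0$, where the relation degenerates further, so we need $\Omega$ non-flat to guarantee that $a_1$ has simple roots and that the factorization is meaningful.

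The second step is to show that $a(\landa)$, the spectral polynomial of $\Omega^0$, has only simple roots, which contradicts the factorization. Since $n$ is minimal for $\Omega^0$, the argument in Section \ref{sec:uniqueness} shows that no $\landa_0$ can satisfy $\xi_{\landa_0}(z)\equiv 0$. So we must exclude a double root $\landa_0$ of $\landa\det\xi_\landa$ at which $\xi_{\landa_0}$ is nilpotent but nonzero for $\Omega^0$. The plan is to pass to the spectral curve $\mu^2=a(\landa)$. A double root with nonvanishing $\xi_{\landa_0}$ produces a singular point of the curve, and the eigenline bundle of $\xi_{\landa_0}$ yields a Baker--Akhiezer type solution of the Schr\"odinger equation \eqref{esro} with eigenvalue $-1/(4\landa_0)$. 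For a bounded real potential this solution is a bounded eigenfunction, as in Lemma \ref{negrut}. By genericity of the real isospectral flows, one then expects the double point to be \emph{nonsingular} in the sense of the Its--Matveev theory, namely that $\xi$ is divisible by $(\landa-\landa_0)$ after an $\cO_\Omega$-invariant gauge, which contradicts minimality.

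If that route is too delicate, the fallback is to use continuity along $\cK_{\Omega^0}$. The minimal order is upper semicontinuous along limits, and the set of bands in $\cI_{\Omega^0}$ whose minimal order is less than $n$ is characterized by the existence of $\landa_0$ with $\xi_{\landa_0}\equiv 0$. This is a closed, flow-invariant condition. We would show it is also open near $\Omega$, because the unduloidal band is nondegenerate: its Jacobi fields $h_0,h_1$ span the bounded kernel of the Robin problem of Lemma \ref{th:appendix}. The expected nondegeneracy says the $n$-dimensional real orbit through $\Omega^0$ cannot accumulate on the one-dimensional orbit of $\Omega$. A neighborhood of $\Omega$ in $\cI_{\Omega^0}$ would then consist of genus-one bands, and since $\cO_{\Omega^0}$ enters this neighborhood, $\Omega^0$ itself would be of order one, a contradiction.

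The main obstacle is the second step, i.e. ruling out that $\Omega^0$ has genuine double points of its spectral curve collapsing onto the genus-one band. I would first try the Shnol/Sturm argument: at a double root of $a$, both Floquet-type solutions of \eqref{esro} are bounded. Since $L\geq 0$ factorizes, the generalized eigenspace is then two-dimensional, and comparison with the unduloidal limit $\Omega$ should force the vanishing of $\xi_{\landa_0}$, which contradicts minimality.
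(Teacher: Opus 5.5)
Your first step is fine and coincides with the paper's: at the limit band one has $\xi_\landa=p(\landa)\,\xi^{(1)}_\landa$ with $\deg p=n-1$, so $a=p^2a_1$. The gap is the second step, which carries the entire content of the theorem but is only sketched as a list of things you expect to hold.

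\begin{itemize}
\item Its target, that the spectral polynomial of a band of minimal order $n$ has only simple roots, does not follow from minimality. Minimality only excludes $\xi_{\landa_0}\equiv 0$. A double root at which $\xi_{\landa_0}$ is nilpotent but nonzero is just a singular point of the spectral curve, and such points occur for minimal relations in general. For instance, the KdV one-soliton satisfies a minimal Novikov relation whose spectral curve has a double point.
\item ``Genericity of the real isospectral flows'' does not rule this out.
\item The Shnol/Sturm route does not start either: at such a double point one in general has only one bounded solution of \eqref{esro}, not two.
\item The fallback is circular. Openness near $\Omega$ of the lower-order locus in $\cI_{\Omega^0}$ is equivalent to the theorem, and the nondegeneracy of the unduloid that you invoke is neither proved nor shown to imply it.
\item The minimal order is not upper semicontinuous in the sense you need. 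It can drop in the limit, and that drop is exactly what has to be excluded.
\end{itemize}

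What is missing is the use of the capillary boundary condition \eqref{releski} at the limit band as a \emph{linear constraint on the initial value} $\xi_\landa(0)\in\mathfrak{sl}(2,\R)$. Propagate $\xi^0$ along the vertical segment $x=0$ via \eqref{edoveti} and evaluate the expression of \eqref{releski} at $y=y^*$. This defines, for each band, a real-linear map $\cH_\landa:\mathfrak{sl}(2,\R)\to\C$ whose kernel contains that band's $\xi_\landa(0)$.

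Place $x=0$ at a crest of the non-flat unduloid, where $\omega(0,y)>0$ for $y>0$. An explicit computation then shows $\cH_\landa(E_{11}-E_{22})\in i\R^-$. Via the trace invariant, some off-diagonal initial value has nonzero real image. Hence $\cH_\landa$ is surjective, with one-dimensional kernel, for every $\landa<0$ (Claim \ref{cl1}). This, rather than simplicity of the roots of $a_1$, is where non-flatness enters.

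Now work near a root $\mu$ of $p$. By continuity, $\xi^{(k)}_\landa(0)=f_k(\landa)V_k(\landa)$ for approximating bands $\Omega^{(k)}\in\cO_{\Omega^0}$, with $V_k\to V$. Minimality along the orbit gives $f_k(\mu)\neq 0$, but $f(\mu)=0$ at the limit. Comparing vanishing orders at $\mu$ in $a(\landa)/\landa=f_k^2\det V_k=f^2\det V$ gives the contradiction (Claim \ref{cl2}). This argument never needs the roots of $a$ to be simple; it only needs the rank of the boundary-condition map at the limit band.
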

\begin{proof}
Assume there exists a non-flat unduloidal band $\Omega\in \cK_{\Omega^0}=\overline{\cO_{\Omega^0}}$. So, there exists a sequence of Serrin bands $\Omega^{(k)}\in \cO_{\Omega^0}$, $k\in \N$, with associated holomorphic functions $\eta^{(k)}:=\omega_z^{(k)}$ and holomorphic Killing fields $\zeta_\landa^{(k)}$, converging as $k\to \8$ to their respective data $\eta$, $\zeta_\landa$ of $\Omega$. Without loss of generality, maybe after a horizontal translation, we place $\Omega$ so that $\Omega\cap\{x_1=0\}$ corresponds to a vertical segment of maximum length in $\Omega$, i.e., to a \emph{crest}.

Since $\Omega$ is unduloidal, its \emph{minimal} spectral genus is $n=1$, i.e. equation \eqref{finitetype} holds on $\Omega$ for $n=1$. That is, $Q_1[\eta]= r Q_0[\eta]=r \eta$ for some constant $r$. Recalling the notations in \eqref{tildepot} and \eqref{def:pochiti}, this implies that all $P_j[\eta]$ are also collinear with $\eta$, and we obtain $$P=P(z,\landa)=h(\landa) \eta(z),$$ for some polynomial $h(\landa)$ of degree $\leq n-1$. From there we have
\begin{equation}\label{chicha}
\xi_\landa(z)= h(\landa) \hat\xi_\landa(z), \hspace{0.5cm} \hat\xi_\landa:=  \left(\def\arraystretch{1} \begin{array}{cc} \eta & \hat\tau \\ \hat\sigma & -\eta \end{array}\right).
\end{equation} 
Here $\hat\xi_\landa(z)$ denotes the \emph{minimal} holomorphic Killing field in $\Omega$, i.e., the one associated with the relation $Q_1[\eta]=r Q_0[\eta]$. This implies $a(\landa)=h(\landa)^2 \,\hat{a}(\landa)$, where $\hat{a}(\landa)$ is the polynomial of degree two of associated with $\hat\xi_\landa$. In particular, $h(\landa)$ has degree $n-1$, and all its roots are negative (by Lemma \ref{negrut}).

The desired contradiction will come from comparing the two existing holomorphic fields $\xi_\landa(z)$ and $\hat\xi_\landa(z)$ at the unduloidal band with respect to the capillary boundary conditions given by \eqref{releski}. To introduce \eqref{releski} into our computation, 
we first write the Lax equation \eqref{edotti} along the vertical axis. 

Denote $\Phi_\landa(y):=\xi_\landa(iy)$, $W(y):=\cW_\landa(iy)$, and $'=d/dy$. Then \eqref{edotti} gives 
\begin{equation}\label{edoveti}
\Phi_\landa' = i [\Phi_\landa, W], \hspace{0.5cm} \Phi_\landa(0)= \xi^0,
\end{equation} 
with $\xi^0:=\xi_\landa (0)$. Note that, by the symmetry along $y=0$, $\xi^0$ takes real values, i.e., $\xi^0\in \mathfrak{sl}(2,\R)$ for each fixed $\landa\in \R$. Define next, motivated by \eqref{releski}, 
\begin{equation}\label{achh}
\cH(\landa):=\landa e^{\omega} \tau - e^{-\omega}\overline\sigma -\frac{2}{A} {\rm Im}\, P, \hspace{0.5cm} \Phi_\landa(y)=\left(\def\arraystretch{1} \begin{array}{cc} P & \tau \\ \sigma & -P \end{array}\right), \hspace{0.2cm} \omega:=\omega(iy^*).
\end{equation} 
Since $\Omega$ satisfies the capillary boundary condition along $y=y^*$, we have $\cH(\landa)=0$. The same equations hold for each $\xi_\landa^{(k)}$, and for $\hat\xi_\landa$.

This allows us to define a map $\cH=\cH(\landa,\xi^0)$ that assigns to each $\landa<0$ and each $\xi^0\in\mathfrak{sl}(2,\R)$ the complex value $\cH(\landa)\in \C$ in \eqref{achh} obtained after integrating the linear system \eqref{edoveti} and evaluating the resulting solution at $y=y^*$. For each fixed $\landa<0$, the map 
\begin{equation}\label{defach}
\cH_\landa\equiv \cH(\landa,\xi^0):\mathfrak{sl}(2,\R)\equiv \R^3\flecha \C
\end{equation} 
is linear. The initial condition $\xi^0=\xi_\landa(0)$ lies in the kernel of $\cH_\landa$, for any $\landa$ since, as we discussed above, we have $\cH(\landa)=0$ in that case. 

Our contradiction will come from the following two facts on the unduloidal band $\Omega$, that we establish in the next two sections.

\begin{claim}\label{cl2}
If $h(\mu)=0$, then $\mu<0$ and ${\rm Ker} \,\cH_\mu$ has dimension $\geq 2$.
\end{claim}
\begin{claim}\label{cl1}
For any $\landa<0$, ${\rm Ker} \,\cH_\landa$ is $1$-dimensional.
\end{claim}

These two claims are contradicting, so after their proofs in Section \ref{sec:cl2} and \ref{sec:cl1} below, this completes the proof of Theorem \ref{th:isolated}.
\end{proof}

\subsection{Proof of Claim \ref{cl2}}\label{sec:cl2}
Since $h(\mu)=0$, we clearly have $\xi_\mu(0)= 0$. Also, for every $k\in \N$, we have $\xi_\mu^{(k)}(0)\neq 0$ since $\xi_\mu^{(k)}$ is associated to an element of the isospectral orbit $\cO_{\Omega^0}$, and we are assuming $\xi_\landa$ is the minimal holomorphic field of $\Omega^0$. This minimality transports to any element in $\cO_{\Omega^0}$, but not necessarily to $\cK_{\Omega^0}$.

Arguing by contradiction, assume ${\rm dim}\, {\rm Ker} \,\cH_\mu =1,$ and take a real linear functional
$
\ell:\mathfrak{sl}(2,\mathbb R)\flecha\mathbb R$ 
which does not vanish on this kernel. For example, we may choose one matrix entry that is nonzero on a generator of ${\rm Ker} \,\cH_\mu$. The equations $\cH_\mu(V)=0$ together with the normalization \(\ell(V)=1\), form an invertible linear system. By continuity, for all $k\in \N$ large enough and all $\landa\in I:= (\mu-\ep,\mu+\ep)$ with $\ep>0$ small enough, the systems
$$
\cH^{(k)}_{\lambda}\bigl(V_k(\lambda)\bigr)=0,
\qquad
\ell^{(k)}\bigl(V_k(\lambda)\bigr)=1
$$
have unique solutions on $\Omega^{(k)}$. Denote by \(V(\lambda)\) the corresponding normalized solution for $\Omega$. Clearly, $V_k(\landa)\to V(\landa)$ smoothly in $I$ as $k\to \8$, since the systems are invertible and the isospectral data of $\Omega^{(k)}$ converge.

Now, for $\landa \in I$, the initial data $\xi_\lambda^{(k)}(0)$ lie in $\mathfrak{sl}(2,\R)$ and belong to the kernel of $\cH_\landa^{(k)}$, since they satisfy \eqref{releski}. Since $\cH_\landa^{(k)}$ is $1$-dimensional, we have
$$
\xi_\lambda^{(k)}(0)=f_k(\lambda)V_k(\lambda),
\qquad
f_k(\lambda):=\ell\bigl(\xi_\lambda^{(k)}(0)\bigr).
$$
Passing to the limit gives $\xi_\lambda(0)=f(\lambda)V(\lambda)$ and 
$f(\lambda):=\ell\bigl(\xi_\lambda(0)\bigr).$ Thus, since the spectral polynomial $a(\landa)$ is preserved throughout $\cO_{\Omega^0}$, we have
\begin{equation}\label{mismaa}
\frac{a(\lambda)}{\lambda}
=f_k(\lambda)^2\det V_k(\lambda)
=f(\lambda)^2\det V(\lambda).
\end{equation}
Since $f(\mu)=0$ by hypothesis, it follows from \eqref{mismaa} that $a(\landa)/\landa$ has a zero of some order $m\geq 1$ at $\landa=\mu$. But since $f_k(\mu)\neq 0$ again by hypothesis, \eqref{mismaa} shows that $\det V_k$ also has a zero of the same order $m$ at $\landa=\mu$. Passing to the limit, $\det V$ has a zero of order $\geq m$ at $\landa=\mu$. This contradicts the vanishing orders in \eqref{mismaa}, since $f(\mu)=0$.

\subsection{Proof of Claim \ref{cl1}}\label{sec:cl1}
Let us first of all compute the map $\cH_\landa=\cH(\landa,\xi^0)$ of \eqref{defach} in coordinates, for any fixed $\landa<0$. Write 
$$
\xi^0=\begin{pmatrix}p&q\\r&-p\end{pmatrix}= p (E_{11}-E_{22})+ q E_{12} + rE_{21}, \hspace{0.5cm} p,q,r\in \R.$$ To start, we consider the solution $\Phi^1(y)$ to the vertical Lax system \eqref{edoveti}, given by 
\begin{equation}\label{edoveti2}
(\Phi^1)' = i [\Phi^1, W], \hspace{0.5cm} \Phi^1(0)=\begin{pmatrix}1&0\\0&-1\end{pmatrix}.
\end{equation} 
Recall that along the vertical symmetry line $x=0$ we have $\omega_x=0$ and $2\eta(iy)=-i\omega_y$. From this structure, the solution to \eqref{edoveti2} can be written as $$\Phi^1(y)= \begin{pmatrix}d(y)&i b(y)\\i c(y)&-d(y)\end{pmatrix},$$ where $b(y),c(y),d(y)$ are real smooth functions that satisfy, denoting $\omega:=\omega(0,y)$,
\begin{equation}\label{edoveti3}
d'=\frac{b}{2}-\frac{c}{2\landa},\hspace{0.5cm} (e^{\omega} b)'= -\frac{e^{\omega}}{\landa} d,\hspace{0.5cm} (e^{-\omega} c)'=e^{-\omega}\, d,\end{equation}
with $d(0)=1$ and $b(0)=c(0)=0$. Since $\lambda<0$, \eqref{edoveti3} and these initial conditions imply
\begin{equation}\label{edoveti4}
d(y)\ge1,\hspace{0.5cm} b(y),c(y)>0, \hspace{0.5cm} \text{for all $y>0$};
\end{equation}
specifically, as long as $d>0$, the last two equations make $b,c$ positive, and then the first makes $d$ increasing, since $\landa<0$. Hence $d\geq 1$ and \eqref{edoveti4} follows. We now compute $\cH_\landa(\Phi^1(0))=\cH_\landa(E_{11}-E_{22})$ using \eqref{achh}. First, for every $y$,
$$\landa e^{\omega} \tau - e^{-\omega}\overline\sigma -\frac{2}{A} {\rm Im}\, P= i\left(\landa e^{\omega}b+ e^{-\omega} c\right):=i \cJ(y).$$ Evaluating at $y=y^*$, we have $\cH_\landa(\Phi^1(0))=i \cJ(y^*)$. Now, a computation from \eqref{edoveti3} shows that 
\begin{equation}\label{edoveti11}
\cJ'(y)=-(e^{\omega}-e^{-\omega})d
\end{equation} 
for all $y>0$. Now, along any unduloidal band, if $x=0$ is the vertical segment corresponding to a crest (as we are assuming), one can check from the equations in \cite{CFM} (Lemma 9.1 and equation (9.3), taking into account that the $(x,y)$ coordinates here are exchanged with respect to \cite{CFM}) that $\omega(0,y)>0$ for every $y>0$. Thus, from \eqref{edoveti11} and since $d>0$, we conclude that $\cJ'(y)<0$  for all $y>0$. Since $\cJ(0)=0$, we deduce finally
\begin{equation}\label{jne}
\cJ(y^*)<0.
\end{equation} This gives a purely imaginary image vector for $\cH_\landa$:
\begin{equation}\label{edoveti10}
\cH_\landa(E_{11}-E_{22})\in i\R^-.
\end{equation}

We can do next a similar computation for
\begin{equation}\label{edoveti5}
(\Phi^2)' = i [\Phi^2, W], \hspace{0.5cm} \Phi^2(0)=\begin{pmatrix}0&q_0\\r_0&0\end{pmatrix}, \hspace{0.5cm} q_0,r_0\in \R.
\end{equation} 
This time the solution is of the form $$\Phi^2(y)= \begin{pmatrix}i p(y)& q(y)\\ r(y)&-ip(y)\end{pmatrix},$$ where $p(y),q(y),r(y)$ are real smooth functions. One obtains after some calculations, denoting $\omega:=\omega(0,y^*)$,
\begin{equation}\label{edoveti6}
\cH_\landa(\Phi^2(0))= \landa e^{\omega} q(y^*)-e^{-\omega} r(y^*) -\frac{2}{A} p(y^*).
\end{equation}
We claim that the quantity in \eqref{edoveti6} is not identically zero in the two-parameter space given by $q_0,r_0$ in \eqref{edoveti5} For that, we first note that
$$
\frac{d}{dy}\operatorname{tr}(\Phi^1(y)\Phi^2(y))=0,
$$
since both $\Phi^j$ solve the same Lax equation. At $y=0$, the trace is zero, so for every $y$,
\begin{equation}\label{edoveti7}
-i \operatorname{tr}(\Phi^1\,\Phi^2)=2dp+cq+br=0.
\end{equation}

From now on, let \(d,b,c,e^{\omega}\) denote their values at $y^*$, which we consider fixed. The values $(p,q,r)$ at $y^*$ of these solutions form exactly the plane \eqref{edoveti7}; indeed, since the vertical propagation along the Lax system is invertible, their space has dimension two, and the coefficient $2d$ is nonzero.
Suppose that \eqref{edoveti6} vanished for every initial $(q_0,r_0)$. Then the linear functional in \eqref{edoveti6} would vanish on the plane of $\R^3$ of implicit equation \eqref{edoveti7} in $(p,q,r)$. Their coefficient vectors would therefore be proportional:
$$
\left(-\frac2A,\lambda e^{\omega},-e^{-\omega}\right)
=s(2d,c,b)
$$
for some real $s$. In particular, $\lambda e^{\omega}=s\, c$, and $e^{-\omega}=-s\,b$.
These two identities imply $
\lambda e^\omega b+e^{-\omega}c=0,$ i.e., $\cJ(y^*)=0$, a contradiction with \eqref{jne}.
Hence, for some $(q_0,r_0)$, we have by \eqref{edoveti5}, \eqref{edoveti6} that $$\cH_\landa(q_0 E_{12}+r_0 E_{21}) \in \R\setminus \{0\}.$$ Together with \eqref{edoveti10}, this implies that the linear map $\cH_\landa:\mathfrak{sl}(2,\R)\flecha \C$ is surjective, and therefore has $1$-dimensional kernel. This concludes the proof.

\subsection{Completion of the proof of Theorem \ref{th:main}}
Let $\Omega^0\subset \R^2$ be a Serrin band different from a flat slab, and let $\cK_{\Omega^0}=\overline{\cO_{\Omega^0}}$ be the closure of its isospectral orbit. Then, within $\cK_{\Omega^0}$ there exists an unduloidal band $\Omega$ (Theorem \ref{th:sime}). This implies by Theorem \ref{th:isolated} that the minimal stationary order of $\Omega^0$ itself is $n\leq 1$. Thus, $\Omega^0$ is foliated by capillary curves by \cite{CFM}, and hence it is itself a member of the unduloidal family $\cF$.

\section*{Appendix: An elliptic lemma}

Consider the band $\cU:=\R\times [0,L]\subset \R^2$, and the Robin boundary problem for harmonic functions $v(x,y)$ given by 
\begin{equation}\label{proble}
v_{xx}+v_{yy}=0 \text{ in $\cU$}, \hspace{0.5cm} v_y(x,0)=0, \hspace{0.5cm} v_y(x,L)= F(x) v(x,L),
\end{equation}
 for every $x\in \R$. Here we assume $F:\R\flecha \R$ is bounded.

\begin{lemma}\label{th:appendix}
In the above conditions, the space $\V$ of bounded solutions $v(x,y)$ to problem \eqref{proble} is finite-dimensional.
\end{lemma}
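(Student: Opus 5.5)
The plan is to view \eqref{proble} as a second-order evolution equation in the variable $x$ on the cross-section $[0,L]$ and to use an exponential-dichotomy (cone) argument. Only finitely many cross-sectional modes can oscillate, and every other mode must grow exponentially in $x\to+\infty$ or $x\to-\infty$, which boundedness forbids. First, some a priori regularity. Let $M:=\sup|F|$. For $v\in\V$ with $\|v\|_{L^\8}\le 1$, the Robin condition gives $|v_y(x,L)|\le M$. Interior and Neumann-type boundary estimates on unit squares then yield uniform $C^{1,\alpha}$ bounds on $v$, independent of $x$. Consequently the energy quantities below, which involve $v_x$, $v_y$ and the trace $v(x,L)$, are finite and bounded along the strip.

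Next, the comparison operator. On $H^1(0,L)$ consider the fixed quadratic form
\[
q(\varphi):=\int_0^L \varphi_y^2\,dy - M\,\varphi(L)^2 .
\]
By the trace inequality, $q$ is bounded below and has discrete spectrum relative to $\int_0^L\varphi^2$, with eigenvalues $\mu_k\ge (k\pi/L)^2-C(M,L)$. Fix $\delta>0$ and let $N$ be the number of eigenvalues of $q$ below $2\delta$. Let $E_-\subset L^2(0,L)$ be their span, of dimension $N$, with $L^2$-orthogonal projection $\Pi$. For each $x$ the true form $a_x(\varphi)=\int\varphi_y^2-F(x)\varphi(L)^2$ satisfies $a_x\ge q$. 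This is why merely bounded, nonsmooth $F$ is harmless: we never use eigenfunctions of $a_x$, only the fixed form $q$. The claim to prove is $\dim\V\le 2N$.

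For the convexity step, let $v\in\V$ and $g(x):=\int_0^L v(x,y)^2\,dy$. Using $v_{xx}=-v_{yy}$ and the boundary conditions,
\[
g''(x)=2\int_0^L v_x^2\,dy+2a_x(v(x,\cdot))\;\ge\; 2\int_0^L v_x^2\,dy + 2\delta\,g(x) - C\,\|\Pi v(x,\cdot)\|^2 .
\]
If $v$ had no low-mode part, $g$ would be a bounded function with $g''\ge 2\delta g\ge0$, hence $g\equiv 0$. To handle the low modes, I would run a cone argument on the pair $(\Pi v, v_x)$. Suppose $\dim\V>2N$. Then the linear map $v\mapsto(\Pi v(0,\cdot),\Pi v_x(0,\cdot))\in E_-\times E_-$ has a nontrivial kernel, so some $0\ne v\in\V$ has vanishing low-mode Cauchy data at $x=0$. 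Consider the Lyapunov functional
\[
\mathcal{L}(x)=\int_0^L v_x^2\,dy - a_x(v)
\]
(a Hamiltonian-type quantity whose $x$-derivative is controlled), combined with $g'$. The goal is to show that a cone of the form $\{\|\Pi v\|\le\epsilon\|(I-\Pi)v\|,\ g'\ge0\}$ is forward invariant, and its mirror image backward invariant. On that cone $g''\ge\delta g$, so $g$ grows like $e^{\sqrt\delta x}$ in one direction, contradicting boundedness. Splitting $v(0,\cdot)$ according to the sign of $g'(0)$ handles both directions.

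The main obstacle is this cone-invariance step: showing that the low-mode part cannot be regenerated from high modes fast enough to spoil the exponential growth. The projection $\Pi$ does not commute with the $x$-dependent Robin condition, so $\Pi v$ does not satisfy a closed equation. My first attempt would be to choose $\delta$ large, so that the spectral gap $2\delta$ dominates the coupling constant $C(M,L)$ coming from the boundary term $(F(x)-M)\,v(L)^2$, which is bounded by the trace inequality. One then checks that the coupling terms are of lower order relative to $\delta\|(I-\Pi)v\|^2$. If this fails, a fallback is a compactness route. Normalize a would-be infinite orthonormal-type family in $\V$ by $\sup$-norm. By the uniform $C^{1,\alpha}$ bounds, translate so that near-maxima sit at $x=0$, and extract locally convergent subsequences. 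One then derives a contradiction from the fact that $N$ is a uniform bound on the number of independent local profiles. This is less clean, because translations destroy linear independence.
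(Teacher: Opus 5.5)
Your reduction matches the paper's: it suffices to find $N$ such that a bounded solution with vanishing low-mode Cauchy data $(\Pi v(0,\cdot),\Pi v_x(0,\cdot))$ vanishes identically. But you never prove this implication. The cone-invariance step you flag is the entire content of the lemma, and the tools you propose for it do not work as stated.

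First, your Lyapunov functional is not controlled. Using $v_{xx}=-v_{yy}$ and both boundary conditions one computes $\mathcal L'(x)=F'(x)\,v(x,L)^2$. For merely bounded $F$ there is therefore no control at all, contrary to your remark that nonsmooth $F$ is harmless. Even for smooth $F$ this derivative has no sign and again involves the trace.

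Second, and more fundamentally, the low modes are forced through the boundary trace. Let $e_k$ be the eigenfunctions of $q$, so $e_k'(0)=0$ and $e_k'(L)=Me_k(L)$. Then the coefficients $b_k(x)=\int_0^L v\,e_k\,dy$ satisfy
\[
b_k''-\mu_k b_k=\bigl(M-F(x)\bigr)\,e_k(L)\,v(x,L).
\]
Here $|v(x,L)|$ is not bounded by $\|(I-\Pi)v(x,\cdot)\|_{L^2}$: point evaluation is not continuous on $L^2(0,L)$, and many high modes can add up coherently at $y=L$. So enlarging $\delta$ does not make the coupling ``lower order relative to $\delta\|(I-\Pi)v\|^2$''. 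In the $L^2$ framework you set up, it cannot be absorbed into the gap term. Smallness only appears after summing the modal responses, through a quantity like $\sum_n e_n(L)^2/|\mu_n-\alpha^2|$ with $\alpha^2$ in a spectral gap, and this decays only like $\log N/N$.

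Your compactness fallback fails for the reason you suspect. Translating near-maxima to the origin moves the vanishing Cauchy data to $x=-x_n$, which may escape to infinity, so nothing usable survives in the limit. A minor point: with $F$ merely bounded, the Neumann data $F\,v(\cdot,L)$ is only $L^\8$. This gives $C^{0,\alpha}$ and $W^{1,p}$ bounds up to $y=L$, not $C^{1,\alpha}$.

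The missing idea is to close the argument on the single scalar function $w(x)=v(x,L)$ in a weighted sup norm, as the paper does.
\begin{itemize}
\item Expand in $\phi_n(y)=\cos(k_n(L-y))$ with $k_n=n\pi/L$. Their Neumann conditions at both ends make the Robin term enter only as a forcing, giving $a_n''-k_n^2a_n=-c_nF\,w$.
\item For $n>N$, boundedness forces $a_n=c_nG_{k_n}*(Fw)$, the Green-kernel representation.
\item For $n\le N$, the vanishing Cauchy data give the Duhamel formula from $x=0$.
\item Set $M_\alpha=\sup_x|w(x)|/\cosh(\alpha x)$ with $\alpha\in(k_N,k_{N+1})$. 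Both representations yield $|a_n(x)|\le c_n\|F\|_\8M_\alpha\cosh(\alpha x)/|k_n^2-\alpha^2|$.
\item Summing gives $M_\alpha\le\|F\|_\8S(\alpha)M_\alpha$ with $S(\alpha_N)=O(\log N/N)$. Hence $w\equiv0$, and then $v\equiv0$, for $N$ large.
\end{itemize}
The weight $\cosh(\alpha x)$ grows at a rate strictly between the low and high modal rates, and it plays exactly the role of your cone. It uses nothing about $F$ beyond boundedness.
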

\begin{proof}
For a solution $v(x,y)\in \cV$, consider its cosine series expansion in $L^2(0,L)$,
$$v(x,y)=\sum_{n=0}^{\8} a_n(x) \phi_n(y), \hspace{0.5cm} a_n(x)=c_n\int_0^L v(x,y) \phi_n(y) \, dy,$$ where $k_n:=n\pi/L$ and $\phi_n(y):= \cos\bigl(k_n(L-y)\bigr)$. Note that $\phi_n(L)=1$. Since $\phi_n'(0)=\phi_n'(L)=0$, integration by parts and \eqref{proble} yields 
$$
\begin{aligned}
a_n''(x)
&=-c_n\int_0^L v_{yy}(x,y)\phi_n(y)\,dy\\
&=-c_n[v_y\phi_n]_0^L
+c_n[v\phi_n']_0^L
-c_n\int_0^L v\phi_n''\,dy\\
&=-c_ng(x)+k_n^2a_n(x),
\end{aligned}
$$
where $g(x):=F(x)v(x,L)$. That is,
\begin{equation}
a_n''-k_n^2a_n=-c_ng.
\label{eq:odes}
\end{equation}
Now, each \(a_n\) is bounded. Thus, for every $n\geq 1$, we have from \eqref{eq:odes} that
\begin{equation}
a_n(x)=c_n G_{k_n}*g = \frac{c_n}{2k_n}\int_{\R}e^{-k_n|x-t|}g(t)\,dt,
\label{eq:green}
\end{equation}
where $G_{k}(x):=e^{-k |x|}/(2k)$ is the Green kernel of $-\parc_{x}^2+k$. In particular,
\[
\lVert a_n\rVert_\infty\leq\frac{c_n}{k_n^2}\lVert g\rVert_\infty
\qquad(n\geq 1).
\]
Consequently, the cosine series converges uniformly, even at $y=L$, and we can define 
\begin{equation}\label{defw}
w(x):=v(x,L)=\sum_{n=0}^\8 a_n(x).
\end{equation}
%

The idea of the reminder of the proof is to show that $v(x,y)$ is determined by some finite number of the coefficient pairs $(a_n(0),a_n'(0))$. To do that, by linearity, it suffices to find $N=N(||F||_{\8},L)\in \N$ such that if
\begin{equation}
a_n(0)=a_n'(0)=0,\qquad 0\leq n\leq N,
\label{eq:data}
\end{equation}
then \(v\equiv 0\). Fix for the moment any \(N\geq 0\), assume \eqref{eq:data}, and take $\alfa\in (k_N,k_{N+1})$.

We first give an estimate for $a_n(x)$, with \(0\leq n\leq N\). Using the zero initial data \eqref{eq:data}, we can solve \eqref{eq:odes}, obtaining
for \(x\geq 0\),
\begin{equation}\label{eqan}
a_n(x)=-c_n\int_0^x\frac{\sinh\bigl(k_n(x-t)\bigr)}{k_n}g(t)\,dt,
\end{equation}
where the quotient inside the integral is understood as \(x-t\) when \(n=0\). Now, estimate $g(x)$ by $\cosh(\alfa x)$:
\begin{equation}
|g(x)|\leq  \lVert F\rVert_\infty\, M_\alpha\cosh(\alpha x)\label{eq:g-bound}, \hspace{0.5cm} M_\alpha:=\sup_{x\in\R}\frac{|w(x)|}{\cosh(\alpha x)} <\8,
\end{equation} where we recall $w(x)$ is bounded. And on the other hand, since \(\alpha>k_n\), one has
\[
\int_0^x\frac{\sinh\bigl(k_n(x-t)\bigr)}{k_n}\cosh(\alpha t)\,dt
=\frac{\cosh(\alpha x)-\cosh(k_nx)}{\alpha^2-k_n^2}.
\]
Then, putting \eqref{eq:g-bound} into \eqref{eqan}, and arguing in the same way for \(x\leq 0\), we obtain
\begin{equation}
|a_n(x)|\leq\frac{c_n\, \lVert F\rVert_\infty\, M_\alpha}{\alpha^2-k_n^2}\cosh(\alpha x),
\qquad 0\leq n\leq N.
\label{eq:low}
\end{equation}

%
%
%

We now estimate $a_n(x)$ for \(n\geq N+1\). This time we will use boundedness through formula \eqref{eq:green}, rather than
initial data. First, because \(k_n>\alpha\), we have
\[
\frac{1}{2k_n}\int_{\R}e^{-k_n|x-t|}\cosh(\alpha t)\,dt
=\frac{\cosh(\alpha x)}{k_n^2-\alpha^2}.
\]
Therefore, by \eqref{eq:green},
\begin{equation}
|a_n(x)|\leq\frac{c_n \, \lVert F\rVert_\infty\,  M_\alpha}{k_n^2-\alpha^2}\cosh(\alpha x),
\qquad n\geq N+1.
\label{eq:high}
\end{equation}
Adding \eqref{eq:low}, \eqref{eq:high}, and using \eqref{defw}, we obtain
$$
\frac{|w(x)|}{\cosh(\alpha x)}\leq \lVert F\rVert_\infty\, M_\alpha S(\alpha), \hspace{0.5cm} S(\alpha)=\frac{1}{L\alpha^2}
+\frac{2}{L}\sum_{n=1}^{\infty}\frac{1}{|k_n^2-\alpha^2|}.$$
Taking the supremum, by definition of $M_\alfa$, we arrive at
\begin{equation}
M_\alpha\leq \lVert F\rVert_\infty\, S(\alpha)M_\alpha.
\label{eq:contraction}
\end{equation}
Our objective is to prove the existence of some $\alfa$ such that $M_\alfa=0$, since that implies $w\equiv 0$, and from there $v=0$ follows easily. So, by \eqref{eq:contraction}, we must find $\alfa$ so that $\lVert F\rVert_\infty\, S(\alpha)<1$. For that, let us choose \(\alpha\) as the midpoint in $(k_N,k_{N+1})$, that is, 
\[
\alpha=\alpha_N=\frac{\pi}{L}\left(N+\frac{1}{2}\right).
\]
Writing \(q=N+\frac{1}{2}\), we have
\[
S(\alpha_N)=\frac{L}{\pi^2}
\left[\frac{1}{q^2}+2\sum_{n=1}^{\infty}\frac{1}{|n^2-q^2|}\right].
\]
This quantity tends to zero as \(N\to\infty\); specifically, by splitting the sum at \(n=N\) and using partial fractions, we obtain
\[
\frac{1}{q^2}+2\sum_{n=1}^{\infty}\frac{1}{|n^2-q^2|}
=\frac{2}{q}\sum_{j=0}^{2N}\frac{1}{j+\frac{1}{2}}.
\]
Hence
\begin{equation}
S(\alpha_N)
=\frac{2L}{\pi^2\left(N+\frac{1}{2}\right)}
\sum_{j=0}^{2N}\frac{1}{j+\frac{1}{2}}
\leq\frac{2L\bigl[2+\log(4N+1)\bigr]}{\pi^2\left(N+\frac{1}{2}\right)}
\longrightarrow 0.
\label{eq:small}
\end{equation}
Choosing \(N\) sufficiently large gives \(\lVert F\rVert_\infty\, S(\alpha_N)<1\). This implies $v\equiv 0$, as explained above, and proves the asserted finite-data uniqueness.

Therefore, $v(x,y)$ is determined by a finite-dimensional data space. Thus, the space $\cV$ is finite dimensional, as desired.
\end{proof}

\end{document}